\UseRawInputEncoding
\documentclass[12pt, reqno]{amsart}
\usepackage[margin=1in]{geometry}
\usepackage{amssymb,latexsym,amsmath,amscd,amsfonts}
\usepackage{latexsym}
\usepackage[mathscr]{eucal}
\usepackage{bm}
\usepackage{mathptmx}

\newcommand{\hsp}{\hspace{5mm}}

\def\textmatrix#1&#2\\#3&#4\\{\bigl({#1 \atop #3}\ {#2 \atop #4}\bigr)}
\def\dispmatrix#1&#2\\#3&#4\\{\left({#1 \atop #3}\ {#2 \atop #4}\right)}
\newcommand{\beg}{\begin{equation}}
	\newcommand{\eeg}{\end{equation}}
\newcommand{\ben}{\begin{eqnarray*}}
	\newcommand{\een}{\end{eqnarray*}}

\newlength{\bibitemsep}\setlength{\bibitemsep}{.6\baselineskip}
\newlength{\bibparskip}\setlength{\bibparskip}{0pt}
\let\oldthebibliography\thebibliography
\renewcommand\thebibliography[1]{%
	\oldthebibliography{#1}%
	\setlength{\parskip}{\bibitemsep}%
	\setlength{\itemsep}{\bibparskip}%
}
\newtheorem{thm}{Theorem}[section]

\newtheorem{lem}[thm]{Lemma}

\numberwithin{equation}{section} 
\theoremstyle{definition}
\newtheorem{defn}[thm]{Definition}
\newtheorem{rem}[thm]{Remark}
\newtheorem{note}[thm]{Note}
\newtheorem{eg}[thm]{Example}

\newcommand{\HS}{\mathcal H}
\newcommand{\C}{\mathbb{C}}

\newcommand{\D}{\mathbb{D}}
\newcommand{\T}{\mathbb{T}}

\newcommand{\ov}{\overline}

\begin{document}
	\title[Minimal isometric dilations and operator models for the polydisc]
	{Minimal isometric dilations and operator models for the polydisc}
	
	\author[Pal and Sahasrabuddhe]{Sourav Pal and Prajakta Sahasrabuddhe}
	
	\address[Sourav Pal]{Mathematics Department, Indian Institute of Technology Bombay,
		Powai, Mumbai - 400076, India.} \email{souravpal@iitb.ac.in , souravmaths@gmail.com}
		
	\address[Prajakta Sahasrabuddhe]{Mathematics Department, Indian Institute of Technology Bombay,
		Powai, Mumbai - 400076, India.} \email{prajakta@math.iitb.ac.in , praju1093@gmail.com}

	\keywords{Isometric dilation, Functional model}
	
	\subjclass[2010]{47A20, 47A25, 47A45, 47B35, 47B38}
	
	\thanks{The first named author is supported by the Seed Grant of IIT Bombay, the CDPA and the `Core Research Grant' with Award No. CRG/2023/005223 of Science and Engineering Research Board (SERB), India. The second named author has been supported by the Ph.D Fellowship of Council of Scientific and Industrial Research (CSIR), India.}

	\begin{abstract}
		
 For commuting contractions $T_1,\dots ,T_n$ acting on a Hilbert space $\HS$ with $T=\prod_{i=1}^n T_i$, we find a necessary and sufficient condition such that $(T_1,\dots ,T_n)$ dilates to a commuting tuple of isometries $(V_1,\dots ,V_n)$ on the minimal isometric dilation space of $T$ with $V=\prod_{i=1}^nV_i$ being the minimal isometric dilation of $T$. This isometric dilation provides a commutant lifting of $(T_1, \dots , T_n)$ on the minimal isometric dilation space of $T$. We construct both Sch$\ddot{a}$ffer and Sz. Nagy-Foias type isometric dilations for $(T_1,\dots ,T_n)$ on the minimal dilation spaces of $T$. Also, a different dilation is constructed when the product $T$ is a $C._0$ contraction, that is ${T^*}^n \rightarrow 0$ as $n \rightarrow \infty$. As a consequence of these dilation theorems we obtain different functional models for $(T_1,\dots  ,T_n)$ in terms of multiplication operators on vectorial Hardy spaces. One notable fact about our models is that the multipliers are all analytic functions in one variable. The dilation when $T$ is a $C._0$ contraction leads to a conditional factorization of $T$. Several examples have been constructed.
  
	\end{abstract}
	
	
	\maketitle
	
	\tableofcontents
		
	\section{Introduction}
	\vspace{0.4cm}
	
\noindent We consider only bounded operators acting on complex Hilbert spaces. A contraction is an operator with norm not greater than $1$.

The aim of dilation, roughly speaking, is to realize a given tuple of commuting operators as a compression of an appropriate commuting tuple of normal operators. Let $(T_1, \dots , T_n)$ be a tuple of commuting contractions acting on a Hilbert space $\HS$. One would like to represent $(T_1, \dots , T_n)$ as a compression of an $n$-tuple of commuting unitaries or more precisely as a compression of an $n$-tuple of commuting isometries, because, every such tuple of commuting isometries extends naturally to a commuting tuple of unitaries. A commuting tuple of isometries $(V_1, \dots , V_n)$ acting on a Hilbert space $\mathcal K$ is said to be an \textit{isometric dilation} of $(T_1, \dots , T_n)$ if $\HS$ can be identified as a closed linear subspace of $\mathcal K$, i.e. $\HS \subseteq \mathcal K$ and for any non-negative integers $k_1, \dots , k_n$
\[
T_1^{k_1} \dots T_n^{k_n}=P_{\HS} (V_1^{k_1}\dots V_n^{k_n})|_{\HS},
\]	
where $P_{\HS}:\mathcal K \rightarrow \HS$ is the orthogonal projection. Moreover, such an isometric dilation is called minimal if
\[
\mathcal K = \ov{Span}\; \{ V_1^{k_1}\dots V_n^{k_n}h\;:\; h\in \HS , \; k_1, \dots , k_n \in \mathbb N \cup \{0 \} \}.
\]	
If $(V_1, \dots , V_n)$ dilates $(T_1, \dots , T_n)$, then each $T_i$ is a compression of $V_i$, that is $T_i=P_{\HS}V_i|_{\HS}$. It is well-known that a contraction admits an isometric dilation (Sz. Nagy, \cite{Nagy 1}) and that a pair of commuting contractions always dilates to a pair of commuting isometries (Ando, \cite{Ando}), though a triple of commuting contractions may or may not dilate to a triple of commuting isometries (Parrott, \cite{Par}). In other words, rational dilation succeeds on the closed unit disk $\ov{\mathbb D}$ and on the closed bidisc $\ov{\mathbb D}^2$ and fails on the closed polydisc $\ov{\D}^n$ when $n \geq 3$. Since a commuting tuple of contractions $(T_1, \dots, T_n)$ does not dilate unconditionally whenever $n\geq 3$, efforts have been made to find classes of contractions that dilate under certain conditions and some remarkable works have been witnessed, e.g. Agler \cite{Agler 1}, Arveson \cite{Arv 3}, Ball, Li, Timotin, Trent \cite{Bal:Tim:Tre}, Ball, Trent, Vinnikov \cite{Bal:Tre:Vin}, Bhat, Bhattacharyya, Dey \cite{Bha:Bhat:Dey}, Binding, Farenick, Li \cite{Bin:Far:chi}, Brehmer \cite{Bre}, Crabb, Davie \cite{Cra:Dav}, Curto, Vasilescu \cite{Cur:Vas 1, Cur:Vas 2}, Dey \cite{Dey}, Grinshpan, Kaliuzhnyi-Verbovetki, Vinnikov, Woerdeman \cite{G:K:V:V:W}, Muller, Vasilescu \cite{Mul:Vas}, Popescu \cite{Pop 5} and many others. See the references therein and also see Section 2 for further details.

\medskip

In this article, we consider the minimal isometric dilation space $\mathcal K$ (which is always unique upto unitary) of the product $T=\prod_{i=1}^n T_i$ of a tuple commuting contractions $(T_1, \dots , T_n)$ acting on $\HS$. We find a necessary and sufficient condition such that $(T_1, \dots , T_n)$ dilates to a commuting isometric tuple $(V_1, \dots , V_n)$ on $\mathcal K$ with the product $V=\prod_{i=1}^n V_i$ being the minimal isometric dilation of $T=\prod_{i=1}^n T_i$. Note that the space $\mathcal K$ is unique in the sense that any two minimal isometric dilation spaces of the product $T$ are unitarily equivalent. This is one of the main results in this article and is stated below.

\begin{thm} \label{main-intro}

	Let $T_1,\ldots,T_n\in \mathcal{B}(\mathcal{H})$ be commuting contractions and let $T={ \prod_{i=1}^n T_i}$. 
	\begin{itemize}
		\item[(a)] If $\mathcal{K}$ is the minimal isometric dilation space of $T$, then $(T_1,\ldots ,T_n)$ possesses an isometric dilation $(V_1,\ldots,V_{n})$ on $\mathcal{K}$ with $V=\Pi_{i=1}^nV_i$ being the minimal isometric dilation of $T$ if and only if there are unique orthogonal projections $P_1,\ldots ,P_n$ and unique commuting unitaries $U_1,\ldots ,U_n$ in $\mathcal{B}(\mathcal{D}_T)$ with $\prod_{i=1}^n U_i=I_{\mathcal{D}_{T}}$ such that the following conditions are satisfied for each $i=1, \dots, n$:
		\begin{enumerate} 
			\item $D_TT_i=P_i^{\perp}U_i^*D_T+P_iU_i^*D_TT$ , 
			\item  $P_i^{\perp}U_i^*P_j^{\perp}U_j^*=P_j^{\perp}U_j^*P_i^{\perp}U_i^*$ ,
			\item $U_iP_iU_jP_j=U_jP_jU_iP_i$ ,
			\item $D_TU_iP_iU_i^*D_T=D_{T_i}^2$ ,
			\item  $P_1+U_1^*P_2U_1+U_1^*U_2^*P_3U_2U_1+\ldots +U_1^*U_2^*\ldots U_{n-1}^*P_nU_{n-1}\ldots U_2U_1 =I_{\mathcal{D}_T}$.
		\end{enumerate}
		\vspace{2mm}

		\item[(b)] Such an isometric dilation is minimal and unique in the sense that if $(W_1, \dots , W_n)$ on $\mathcal K_1$ and $(Y_1, \dots , Y_n)$ on $\mathcal K_2$ are two isometric dilations of $(T_1, \dots , T_n)$ such that $W=\prod_{i=1}^nW_i$ and $Y=\prod_{i=1}^nY_i$ are minimal isometric dilations of $T$ on $\mathcal K_1$ and $\mathcal K_2$ respectively, then there is a unitary $\widetilde{U}:\mathcal K_1 \rightarrow \mathcal K_2$ such that $(W_1, \dots , W_n)=(\widetilde{U}^*Y_1\widetilde{U}, \dots , \widetilde{U}^*Y_n\widetilde{U})$.
	\end{itemize}			
	
\end{thm}
This is Theorem \ref{main} in this paper and will be proved in Section \ref{sec:03}. We show an explicit construction of a Sch$\ddot{a}$ffer type minimal isometric dilation for $(T_1, \dots , T_n)$ on the space $\mathcal K= \HS \oplus l^2(\mathcal D_T)$, where $\mathcal D_T=\ov{Ran}(I-T^*T)^{\frac{1}{2}}$ (see Theorem \ref{main}). We also show in Theorem \ref{coromain} that such a dilation can be constructed with the conditions $(1)-(4)$ of Theorem \ref{main-intro} only, though we do not have an exact converse part then. A special emphasis is given to the case when the product $T$ is a $C._0$ contraction, i.e. ${T^*}^n\rightarrow 0$ strongly as $n\rightarrow \infty$. We show in Theorem \ref{puredil} that an analogue	of Theorem \ref{main-intro} can be achieved for the $C._0$ case with a weaker hypothesis. We explicitly construct an isometric dilation in this case too. This leads to a functional model and a factorization of a $C._0$ contraction. A notable fact about this model is that the multipliers involved here are linear analytic functions in one variable. In Theorem \ref{Nagy isodil}, another main result of this paper, we construct explicitly a similar isometric dilation for $(T_1, \dots , T_n)$ on the Sz. Nagy-Foias minimal isometric dilation space of $T$. In Section \ref{Examples}, we provide several examples describing different classes of commuting contractions that dilate to commuting isometries conditionally. There we show that our classes of commuting contractions admitting isometric dilations are not properly contained in any of the previously determined classes in the literature. Also, none of such classes from the literature is a proper subclass of our classes, however there are intersections. Finally, we present a model theory for a class of commuting contractions in Section \ref{model-theory}.

		\vspace{0.1cm}
		
		\section{A brief history of dilation on the polydisc}
		
		\vspace{0.2cm}
		
\noindent An isometric dilation $(V_1, \dots , V_n)$ of $(T_1, \dots , T_n)$ naturally extends to a tuple of commuting unitaries $(U_1, \dots , U_n)$ and consequently $(U_1, \dots , U_n)$ becomes a unitary dilation of $(T_1, \dots , T_n)$. Since $(U_1, \dots , U_n)$ is a tuple of commuting unitaries having its Taylor joint spectrum on the $n$-torus $\T^n$ which is the distinguished boundary of the closed polydisc $\ov{\D}^n$, following Arveson's terminology (see \cite{Arv 2}) we say that $\ov{\D}^n$ is a complete spectral set for $(T_1, \dots , T_n)$. So, it follows that $\ov{\D}^n$ is a spectrral set for $(T_1, \dots , T_n)$. Thus, the $n$-tuples of commuting contractions that dilate to commuting isometries or unitaries must have $\ov{\D}^n$ as a spectral set. In \cite{Hal}, Halmos constructed a unitary $U$ on a certain larger space for a contraction $T$ acting on a Hilbert space $\mathcal H$ such that $T=P_{\HS}U|_{\HS}$, which is to say that $T$ is a compression of a unitary $U$. Existence of an isometry satisfying such a compression relation was proved before it by Julia, e.g. see \cite{Jul 1}, \cite{Jul 2}, \cite{Jul 3}. The unitary dilation of Halmos was missing the compression-vs-dilation frame for positive integral powers of $U$ and $T$. Later, Sz. Nagy resolved this issue in \cite{Nagy 2} with an innovative idea, where he proved that there is a Hilbert space $\mathcal{K}$ containing $\mathcal{H}$ and a unitary $U$ on $\mathcal K$ such that $T^n=P_{\HS}U^n|_{\HS}$ for any non-negative integer $n$. This is well-known as Sz. Nagy unitary dilation of a contraction. A few years after Sz. Nagy's famous discovery, Douglas \cite{Dou 3} and J. J. Sch$\ddot{a}$ffer \cite{Sha} produced distinct and explicit constructions of such a unitary dilation (of a contraction). The pioneering works of Sz.-Nagy, Douglas and Sch$\ddot{a}$ffer were further generalized by Ando to a pair of commuting contractions. Indeed, in \cite{Ando} Ando constructed an isometric dilation $(V_1,V_2)$ for a pair of commuting contractions $(T_1,T_2)$. Success of dilation for a pair of commuting contractions led to the natural question, whether an arbitrary $n$-tuple of commuting contractions dilates to some $n$-tuple of commuting isometries or unitaries for $n\geq 3$. This question was answered negatively by S. Parrott in \cite{Par} via a counter example. One way of realizing the impact of this dilation result is the celebrated von-Neumann inequality.
\begin{thm} [\cite{Von}]
	Let $T$ be a contraction on some Hilbert space $\mathcal{H}$. Then for every polynomial $p\in \mathbb{C}[z]$,
	\[ \|p(T\| \leq \sup_{|z|\leq 1}|p(z)|.\] 
\end{thm}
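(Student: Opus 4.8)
The plan is to deduce von Neumann's inequality directly from Sz.-Nagy's unitary dilation, which was recalled in the discussion above. First I would invoke that theorem for the contraction $T$: there exist a Hilbert space $\mathcal{K} \supseteq \mathcal{H}$ and a unitary $U \in \mathcal{B}(\mathcal{K})$ such that
\[
T^k \;=\; P_{\mathcal{H}}\, U^k \big|_{\mathcal{H}} \qquad \text{for every } k = 0,1,2,\dots .
\]
Since the compression map $A \mapsto P_{\mathcal{H}} A|_{\mathcal{H}}$ is linear, for any polynomial $p(z) = \sum_{k=0}^{m} a_k z^k$ one obtains at once $p(T) = P_{\mathcal{H}}\, p(U)\big|_{\mathcal{H}}$.

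Next I would pass to operator norms. For each $h \in \mathcal{H}$,
\[
\| p(T) h \| \;=\; \| P_{\mathcal{H}}\, p(U) h \| \;\le\; \| p(U) h \| \;\le\; \| p(U) \| \, \| h \|,
\]
because an orthogonal projection is norm non-increasing; hence $\| p(T) \| \le \| p(U) \|$. It then remains to estimate $\| p(U) \|$. As $U$ is unitary it is in particular normal, so by the spectral theorem (equivalently, the continuous functional calculus for $U$ is isometric) $\| p(U) \| = \sup_{\lambda \in \sigma(U)} |p(\lambda)|$; and since $U$ is unitary, $\sigma(U) \subseteq \mathbb{T}$. Therefore $\| p(U) \| \le \sup_{|\lambda| = 1} |p(\lambda)|$, and by the maximum modulus principle applied to the holomorphic function $p$ on $\overline{\mathbb{D}}$ this last supremum equals $\sup_{|z| \le 1} |p(z)|$. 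Chaining the displayed inequalities yields $\| p(T) \| \le \sup_{|z|\le 1} |p(z)|$, as claimed.

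The only substantive ingredient here is the existence of the unitary dilation $U$, which I am free to assume; granting it, the argument is essentially immediate, and this is really the point of listing von Neumann's inequality right after the dilation results. Were one to insist on a self-contained proof, the main obstacle would be constructing $U$ — for instance, in the spirit of Douglas and Sch\"affer, realizing it on a direct sum of $\mathcal{H}$ with $\ell^2$-copies of the defect spaces $\mathcal{D}_T = \ov{\mathrm{Ran}}\,(I - T^*T)^{1/2}$ and $\mathcal{D}_{T^*} = \ov{\mathrm{Ran}}\,(I - TT^*)^{1/2}$, and then verifying that the resulting operator is genuinely unitary and reproduces the powers of $T$ upon compression to $\mathcal{H}$. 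Everything past that step is routine.
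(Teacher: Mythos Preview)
Your argument is correct and is exactly the approach the paper alludes to: the paper does not give its own proof of this theorem (it is stated with a citation to \cite{Von}), but the very next sentence observes that the existence of a unitary dilation is sufficient for the von Neumann inequality, which is precisely the deduction you carry out.
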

It was observed that the existence of a unitary dilation is sufficient for a commuting tuple of contractions to satisfy von-Neumann inequality. Using this principle, Crabb and Davie in \cite{Cra:Dav} and Varopoulos in \cite{Var} produced examples of a triple of commuting contractions which do not satisfy Von-Neumann inequality and hence do not admit a unitary dilation. These examples spurred a lot of mathematicians to look into the von-Neumann inequality for commuting contractions at least on the finite dimensional spaces, \cite{Cho:Li}, \cite{Dru 1}, \cite{Dru 2}, \cite{Hol}, \cite{Kni}. In their seminal paper \cite{Agl:Mac}, Agler and McCarthy proved a sharper version of von-Neumann inequality for a pair of commuting and strictly contractive matrices. In \cite{Das:Sar}, Das and Sarkar presented a new proof to the result of Agler and McCarthy with a refinement of the class of matrices. The impact of Ando's dilation is eminent even in the 20th century. In \cite{Bag:Bhatt}, Bagchi, Bhattacharyya, Misra have presented an elementary proof of Ando’s theorem in a $C^*$-algebraic setting, within a restricted class of homomorphisms modeled after Parrott’s example. In \cite{Sau}, Sau gave new proofs to And$\hat{o}$’s dilation theorem with Sch$\ddot{a}$ffer and Douglas-type constructions.

\smallskip
 
 In \cite{Bre}, Brehmer introduced the concept of regular unitary dilation and systematically studied the existence of such dilation. For any $\alpha\in \mathbb{Z}^n$ let $ \alpha_{-}=(-\min\{o,\alpha_1\},\ldots , -\min\{0, \alpha_n\})$ and $\alpha_{+}=(\max\{ 0,\alpha_1\},\ldots , \max\{0,\alpha_n\})$. For a given commuting $n$-tuple of contractions $(T_1,\ldots,T_n)$ and a tuple of positive integers $m=(m_1,\ldots ,m_n)$, the following notation is used in the literature: ${T}^m:=\Pi_{i=1}^nT_i^{m_i}$.
   
\begin{defn}[Section 9, \cite{Nagy}]
	A commuting $n$-tuple of unitaries $U=(U_1,\ldots ,U_n)$ on a Hilbert space $\mathcal{K}$ is said to be a regular unitary dilation of a commuting $n$-tuple of contractions $T=(T_1,\ldots,T_n)$ on $\mathcal{H}\subseteq \mathcal{K}$, if for any $\alpha\in \mathbb{Z}^n$, \[T^{*\alpha_{-}}T^{\alpha_{+}}=P_{\mathcal{H}}U^{*\alpha_{-}}U^{\alpha_{+}}|_{\mathcal{H}}.  \] 
\end{defn}
Brehmer proved in \cite{Bre} that a tuple of commuting contractions, if admits a regular unitary (or isometric) dilation, can be completely characterized by some positivity conditions, which is known as Brehmer's positivity. A tuple $T$ is said to satisfy Brehmer's positivity condition if
\begin{equation} \label{lit:eqn01}
\sum_{F\subseteq G}(-1)^{|F|}T_{F}^*T_F\geq 0 
\end{equation}
for all $G\subseteq\{ 1,\ldots n \}$. It follows from the definition that the existence of a regular unitary dilation implies the existence of a unitary dilation for a commuting tuple of contractions. The study of Brehmer was further continued by Halperin in \cite{Halp 1} and \cite{Halp 2}. The positivity condition introduced by Brehmer attracted considerable attentions, e.g see the novel works due to Agler \cite{Agler 1}, Curto and Vasilescu  \cite{Cur:Vas 1},\cite{Cur:Vas 2}. Indeed, Curto and Vasilescu generalized the original theorem of Brehmer together with Agler's results on hypercontractivity by general model theory for multi-operators which satisfy certain positivity conditions. An alternative approach to the results due to Agler, Curto and Vasilescu was provided by Timotin in \cite{Tim}. Timotin's approach had thrown some new lights on the geometric and combinatorial parts of the model theory of Agler, Curto and Vasilescu. In \cite{Bin:Far:chi}, Binding, Farenick and Li proved that for every $m$-tuple of operators on a Hilbert space, one can simultaneously dilate them to normal operators on the same Hilbert space such that the dilating operators have finite spectrums. On the other hand, there are nontrivial results on dilation of a contractive but not necessarily commuting tuples. In \cite{Dav}, Davis started studying such tuples and then Bunce \cite{Bun} and Frazho \cite{Fra} provided a wider and concrete form to this analysis. An extensive research in the direction of non-commuting dilation has been carried out by Popescu in \cite{Pop 1}-\cite{Pop 5} and also in collaboration with Arias in \cite{Ari:Pop 1}, \cite{Ari:Pop 2}. In \cite{Bha:Bhat:Dey}, Bhat, Bhattacharyya and Dey proved that for a commuting contractive tuple the standard commuting dilation is the maximal commuting dilation sitting inside the standard non-commuting dilation. 

In \cite{Arv 3}, Arveson considered a $d-$tuple $(T_1,\ldots ,T_d)$ of mutually commuting operators acting on a Hilbert space $\mathcal{H}$ such that
\[
\|T_1h_1+\ldots +T_dh_d \|^2\leq \|h_1\|^2+\ldots +\|h_d\|^2.
\]
He showed many of the operator-theoretic aspects of function theory of the unit disk generalize to that of the unit ball $B_d$ in complex $d-$space, including von-Neumann inequality and the model theory of contractions. Apart from this, the notable works due to Athavale \cite{Ath 1}-\cite{Ath 3}, Druy \cite{Dru 1}, Vasilescu \cite{Vas 1} were among the early contributors to the multi-parameter operator theory on the unit ball in $\mathbb{C}^n$. In \cite{Mul:Vas}, Muller and Vasilescu analyzed some positivity conditions for commuting
multi-operators which ensured the unitary equivalence of these objects to some standard models consisting of backwards multishifts. They considered spherical dilation of a tuple of commuting contractions $(T_1,\ldots T_d)$ on $\HS$. Such a tuple dilates to a tuple of commuting normal operators $(N_1,\ldots ,N_d)$ on $\mathcal{K} \supseteq \mathcal H$ satisfying
\[
N_1^*N_1+\cdots +N_d^*N_d=I_{\mathcal{K}}.
\]
In \cite{Mul:Vas}, Muller and Vasilescu gave a necessory and sufficient condition for a commuting multioperator to have spherical dilation in terms of positivity of certain operator polynomials involving $T$ and $T^*$. The dilation results of Sz. Nagy \cite{Nagy 1} for contractions and Agler \cite{Agler 2} for $m-$hypercontractions follow as a special case of the result due to Muller and Vasilescu. It is evident that unlike unitary dilation, the tuple of contractions that admit regular dilations can be completely characterized by Brehmer's positivity conditions \cite{Bre}. So, this means that Curto and Vasilescu in \cite{Cur:Vas 2} have found a bigger class of contractive tuples which admit commuting unitary dilations. Later Grinshpan, Kaliuzhnyi, Verbovetskyi, Vinnikov and Woerdeman \cite{G:K:V:V:W} extended this result to a bigger class which was denoted by $\mathcal{P}^d_{p,q}$. Recently Barik, Das, Haria and Sarkar \cite{B:D:H:S} introduced even a larger class of commuting contractions, denoted by $\mathcal{T}^n_{p,q}(\mathcal{H})$, which dilate to commuting isometries. Also, Barik and Das established a von-Neumann inequality for a tuple of commuting contractions belonging to $\mathcal{B}^n_{p,q}$. In the expository essay \cite{Lev:Sha}, Levy and Shalit discussed a finite dimensional approach to dilation theory and have answered to some extent how much of the dilation theory can work out within the realm of linear algebra. Also, an interested reader is referred to \cite{Mac:Sha 1} due to McCarthy and Shalit. In \cite{Sto:Sza}, Stochel and Szafraniec proposed a test for a commutative family of operators to have a unitary power dilation. For a detailed study of dilation theory, an interested reader is also referred to the nice survey articles by Bhattacharyya \cite{Bha 3} and Shalit \cite{Sha}. 

\vspace{0.1cm}

\section{Sch$\ddot{a}$ffer type minimal isometric dilation} \label{sec:03}
	
\vspace{0.3cm}
	
\noindent Let us recall a few notations and terminologies from the literature. For a contraction $T$ on a Hilbert space $\HS$, the \textit{defect operator} of $T$ is the unique positive square root of $I-T^*T$ and it is denoted by $D_T$. Also, the closure of the range of $D_T$ is denoted by $\mathcal D_T$, i.e. $\mathcal D_T=\overline{Ran} \, D_T$. A contraction $T\in \mathcal B(\HS)$ is called \textit{completely non-unitary} or simply \textit{c.n.u.} if there is no non-zero subspace $\HS_1$ of $\HS$ that reduces $T$ and on which $T$ acts as a unitary. The classical $L^2$ space consists of complex-valued functions defined on the unit circle $\T$ that are square integrable with respect to the Lebesgue measure on $\T$. A canonical basis for $L^2$ is $\{e^{in\theta}\,:\, n\in \mathbb Z \}$ and the closed subspace of $L^2$ generated by the basis $\{ e^{i n\theta}\,:\, n=0,1,2, \dots \}$ is denoted by $\widetilde{H}^2$. For any Hilbert space $E$, the space $L^2(E)$ is defined similarly as $L^2$, the only difference is that the functions in $L^2(E)$ are $E$-valued. It is well-known that the Hilbert spaces $L^2(E)$ and $L^2 \otimes E$ are unitarily equivalent. Under this unitary equivalence, the replica of $\widetilde{H}^2 \otimes E$ in $L^2(E)$ is denoted by $\widetilde{H}^2(E)$. A \textit{multiplication operator} $M_{\phi}$ on $L^2(E)$, where $\phi(z)$ is an essentially bounded function from $\T$ to $E$, i.e. $\phi \in L^{\infty}(E)$, is defined by $M_{\phi}f(z)=\phi(z)f(z)$. For any $\phi \in L^{\infty}(E)$, the \textit{Toeplitz} operator $T_{\phi}$ on $\widetilde{H}^2(E)$ is defined by $T_{\phi}f(z)=P \phi(z)f(z)$, where $P:L^2(E) \rightarrow \widetilde{H}^2(E)$ is the orthogonal projection. For any Hilbert space $E$, the \textit{Hardy space} $H^2(E)$ consists of analytic functions from the unit disk $\D$ to $E$ with square summable coefficients in its power series, i.e.
\[
H^2(E) =\{f:\D \rightarrow E \,:\, f(z)=\sum_{i=0}^{\infty} \, a_nz^n\,, \, \, a_n \in E \, \text{ for all } n\in \mathbb N \cup \{0\} \, \& \, \sum_{i=0}^{\infty}\, \|a_n\|^2 < \infty  \}.
\]
The Hilbert spaces $\widetilde{H}^2(E)$ and $H^2(E)$ are unitarily equivalent. A multiplication operator $M_{\phi}$ on $H^2(E)$, where $\phi(z)$ is an analytic multiplier, is defined by $M_{\phi}f(z)=\phi(z)f(z)$.

\smallskip

 To explain the results of this Section, we begin with the Berger-Coburn-Lebow model (or, simply the BCL model) for commuting isometries which will be used in sequel.

\begin{thm}[Berger-Coburn-Lebow, \cite{Ber}] \label{BCL}
	Let $V_1, \dots , V_n$ be commuting isometries on $\HS$ such that $V=\prod_{i=1}^n V_i$ is a pure isometry. Then, there exist projections $P_1, \dots , P_n$ and unitaries $U_1, \dots , U_n$ in $\mathcal B(\mathcal D_{V^*})$ such that
\[
(V_1, \dots , V_n , V) \equiv (T_{P_1^{\perp}U_1+ zP_1U_1 }, \dots , T_{P_n^{\perp}U_n+zP_nU_n }, T_z)  \;\; \text{ on } \; \; H^2(\mathcal D_{V^*}).
\] 

\end{thm}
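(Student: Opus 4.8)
The plan is to reduce everything to the well-understood structure of the commutant of the unilateral shift. Since $V=\prod_{i=1}^n V_i$ is a pure isometry, $I-VV^*$ is the orthogonal projection onto $\ker V^*$, so $\mathcal D_{V^*}=\ker V^*=:E$, and the Wold decomposition identifies $\HS$ unitarily with the vectorial Hardy space $H^2(E)$ in such a way that $V$ becomes $T_z$. From here on one works entirely on $H^2(E)$.

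The next step is to observe that each $V_i$ commutes with $V=T_z$, so by the (classical) description of the commutant of the unilateral shift, every such $V_i$ is an analytic Toeplitz operator $T_{\Phi_i}$ with $\Phi_i\in H^\infty(\mathcal B(E))$; and since $V_i$ is an isometry with analytic symbol, $T_{\Phi_i}^*T_{\Phi_i}=T_{\Phi_i^*\Phi_i}=I$ forces $\Phi_i(\zeta)$ to be an isometry for a.e.\ $\zeta\in\T$, i.e.\ $\Phi_i$ is inner. The same applies to $W_i:=\prod_{j\neq i}V_j$ (a product of isometries, hence an isometry, still commuting with $T_z$): $W_i=T_{\Psi_i}$ with $\Psi_i\in H^\infty(\mathcal B(E))$ inner, and $T_{\Phi_i}T_{\Psi_i}=T_z$ gives $\Phi_i(z)\Psi_i(z)=zI_E$ on $\D$.

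The crux of the argument, and the step I expect to be the main obstacle, is showing that $\Phi_i$ is affine in $z$. On $\T$ the product $\Phi_i\Psi_i$ equals $\zeta I_E$, which is unitary; and a product of two isometries that happens to be unitary forces each factor to be unitary (if $AB=U$ is unitary and $A,B$ are isometries, then $B=A^*U$, so $I=B^*B=U^*AA^*U$ gives $AA^*=I$, whence $A$, and then $B$, is unitary). So $\Phi_i(\zeta)^{-1}=\Phi_i(\zeta)^*$ a.e., hence $\Phi_i(\zeta)=\zeta\,\Psi_i(\zeta)^*$ a.e.; comparing Fourier coefficients (the right-hand side has frequencies $\le 1$ since $\Psi_i\in H^\infty$, the left-hand side has frequencies $\ge 0$) yields $\Phi_i(z)=A_i+zB_i$ for some $A_i,B_i\in\mathcal B(E)$.

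Finally I would extract $P_i$ and $U_i$. Matching the coefficients of $1$ and $\zeta$ in $\Phi_i(\zeta)^*\Phi_i(\zeta)=I=\Phi_i(\zeta)\Phi_i(\zeta)^*$ gives
\[
A_i^*A_i+B_i^*B_i=I=A_iA_i^*+B_iB_i^*,\qquad A_i^*B_i=0=B_iA_i^*.
\]
From $B_iA_i^*=0$ (equivalently $A_iB_i^*=0$) one gets $A_iB_i^*B_i=0$, hence $(I-B_i^*B_i)B_i^*B_i=0$, so $B_i^*B_i$ is a projection and $B_i$ is a partial isometry; then $P_i:=B_iB_i^*$ is a projection and $U_i:=A_i+B_i$ is unitary (its two adjoint products collapse to $A_i^*A_i+B_i^*B_i$ and $A_iA_i^*+B_iB_i^*$, using $A_i^*B_i=B_i^*A_i=0$). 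A short computation using $B_i^*A_i=0$ and $B_iB_i^*B_i=B_i$ gives $P_iU_i=B_i$ and $P_i^{\perp}U_i=A_i$, i.e.\ $\Phi_i(z)=P_i^{\perp}U_i+zP_iU_i$; together with $V=T_z$ this is exactly the asserted model. (One may additionally read off the commutation relations among the $P_i,U_i$ and the identity $\prod_i(P_i^{\perp}U_i+zP_iU_i)=zI_E$ by expanding $\Phi_i\Phi_j=\Phi_j\Phi_i$ and $\prod_i\Phi_i=z$ in powers of $z$, though these are not needed for the statement as phrased.)
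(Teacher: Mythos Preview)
Your proof is correct. The paper does not give its own proof of this statement; Theorem~\ref{BCL} is simply quoted from Berger--Coburn--Lebow \cite{Ber} as background. That said, essentially the same mechanism you use---identifying $V$ with $M_z$, writing each $V_i$ as a Toeplitz operator with analytic symbol in the commutant of the shift, using the companion factor $W_i=\prod_{j\neq i}V_j$ to force the symbol to be affine, and then extracting a projection and a unitary from the two coefficients---appears later in the paper, embedded in the converse direction of Theorem~\ref{main}. There the authors obtain $S_i=M_{\phi_i}$ with $\phi_i(z)=F_i+F_i'^*z$ from $M_{\phi_i}=(M_{\phi_i'})^*M_z$, and set $U_i=F_i^*+F_i'$, $P_i=\tfrac12(I-U_i'^*U_i)$ with $U_i'=F_i^*-F_i'$; your choice $U_i=A_i+B_i$, $P_i=B_iB_i^*$ is an equivalent (and slightly cleaner) packaging of the same algebra. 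One small remark: when you verify that $U_i$ is unitary you need all four vanishing cross terms $A_i^*B_i=B_i^*A_i=A_iB_i^*=B_iA_i^*=0$, not just the two you name, but you have in fact derived all four from $\Phi_i^*\Phi_i=I=\Phi_i\Phi_i^*$ on $\T$.
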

 Later, Bercovici, Douglas and Foias found a refined operator model for commuting c.n.u. isometries in \cite{Berc:Dou:Foi}. They introduced the notion of model $n$-iometries. A \textit{model} $n$-\textit{isometry} is a tuple of commuting $n$-isometries $(V_1, \dots , V_n)$ such that each $V_i$ is a multiplication operator of the form $M_{U_iP_i^{\perp}+zU_iP_i}$ and $\prod_{i=1}^n V_i=M_z$, where $P_1, \dots , P_n$ are orthogonal projections and $U_1, \dots , U_n$ are unitaries acting on a Hilbert space $\HS$. The following characterization theorem for model $n$-isometries is nothing but a variant of the model due to Bercovici, Douglas and Foias and a proof follows from Lemma 2.2 in \cite{Berc:Dou:Foi} and the discussion bellow it. This will be used in sequel.

		\begin{thm}[Bercovici, Douglas and Foias, \cite{Berc:Dou:Foi}] \label{BDF lemma O}
			Let $U_1,\ldots ,U_n$ be unitaries on Hilbert space $\mathcal{H}$  and $P_1,\ldots ,P_n$ be orthogonal projections on $\mathcal{H}$. For each $1\leq i \leq n$,  let $V_i=M_{U_iP_i^{\perp}+zU_iP_i}$. Then $(V_1,\ldots ,V_n)$ defines a commuting $n$-tuple of isometries with $\Pi_{i=1}^nV_i=M_z$ if and only if the following conditions are satisfied. \begin{enumerate}
				\item $U_iU_j=U_jU_i$ for all $1\leq i <j \leq n$,
				\item $U_1\ldots U_n=I_{\mathcal{H}}$,
				\item $P_j+U_j^*P_iU_j=P_i+U_i^*P_jU_i \leq I_{\mathcal{H}}$, for all $i\neq j$ and 
				\item  $P_1+U_1^*P_2U_1+U_1^*U_2^*P_3U_2U_1+\ldots +U_1^*U_2^*\ldots U_{n-1}^*P_nU_{n-1}\ldots U_2U_1 =I_{\mathcal{H}}$.
				
			\end{enumerate} 
		\end{thm}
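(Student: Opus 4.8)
The plan is to follow the standard route of writing each $V_i$ and the product $M_z$ as concrete $2\times 2$ (or block) operator matrices acting on the Fourier decomposition of $H^2(\mathcal H)$ and then reading off what the commutativity $V_iV_j=V_jV_i$ and the product identity $\prod_i V_i = M_z$ amount to at the level of the symbols. Recall that for a bounded analytic symbol $\varphi(z)=A+zB$ on $\mathcal H$, the multiplication operator $M_\varphi$ on $H^2(\mathcal H)$ acts on the coefficient sequence $(h_0,h_1,h_2,\ldots)$ by $(Ah_0,\ Bh_0+Ah_1,\ Bh_1+Ah_2,\ldots)$. Thus $M_\varphi M_\psi = M_{\varphi\psi}$, so $V_iV_j=M_{(U_iP_i^\perp+zU_iP_i)(U_jP_j^\perp+zU_jP_j)}$, and the symbol is a degree-$2$ polynomial $C_0 + zC_1 + z^2C_2$ with
\[
C_0 = U_iP_i^\perp U_jP_j^\perp,\quad C_1 = U_iP_i^\perp U_jP_j + U_iP_iU_jP_j^\perp,\quad C_2 = U_iP_iU_jP_j .
\]
Two multiplication operators with analytic symbols agree if and only if the symbols agree, so $V_iV_j=V_jV_i$ is equivalent to the three operator identities obtained by symmetrizing $C_0,C_1,C_2$ in $i,j$. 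The $C_0$ identity is $U_iP_i^\perp U_jP_j^\perp=U_jP_j^\perp U_iP_i^\perp$ and the $C_2$ identity is $U_iP_iU_jP_j=U_jP_jU_iP_i$; these are exactly conditions of the type appearing in Theorem \ref{main-intro}(2)–(3), and together with the $C_1$ identity one shows — this is the computational heart of the argument — that they are jointly equivalent to the single pair of conditions in item (1) ($U_iU_j=U_jU_i$) and item (3) ($P_j+U_j^*P_iU_j = P_i+U_i^*P_jU_i$). The forward implication here is the delicate one: from the three symbol identities, multiply on appropriate sides by $U_i^*,U_j^*$ and use $P_i^\perp = I-P_i$ to separate the "unitary part" from the "projection part"; the $C_0$ and $C_2$ relations combined should force $U_iU_j=U_jU_i$, after which the $C_1$ relation collapses to the projection identity in (3). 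Conversely, given (1) and (3) one expands and checks the three symbol identities directly.

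Next I would handle the product identity. Since $\prod_{i=1}^n V_i = M_{(U_1P_1^\perp+zU_1P_1)\cdots(U_nP_n^\perp+zU_nP_n)}$, and we want this to equal $M_z = M_{0 + zI}$, we need the product symbol $\Psi(z):=\prod_{i=1}^n(U_iP_i^\perp+zU_iP_i)$ to equal $zI_{\mathcal H}$. Expanding $\Psi$, the constant term is $U_1P_1^\perp U_2P_2^\perp\cdots U_nP_n^\perp$ and the top-degree ($z^n$) term is $U_1P_1\cdots U_nP_n$; the coefficient of $z^k$ is the sum over all $k$-subsets $S\subseteq\{1,\ldots,n\}$ of the ordered product with $P_i$ in slots $i\in S$ and $P_i^\perp$ elsewhere. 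Requiring all coefficients except the degree-$1$ one to vanish, and the degree-$1$ coefficient to be $I$, is on its face a system of $n+1$ equations; the claim is that, given that the $V_i$ already commute (so (1) and (3) hold, equivalently the symbol identities above), this whole system is equivalent to the single condition (4), namely $P_1+U_1^*P_2U_1+\cdots+U_1^*\cdots U_{n-1}^*P_nU_{n-1}\cdots U_1 = I$. The mechanism is that under commutativity of the $U_i$'s one can conjugate the degree-$1$ coefficient of $\Psi$ by $U_n^*\cdots U_1^*$ and, using $U_1\cdots U_n$ appearing as a factor, rewrite it as $U_1\cdots U_n\,(\,P_1 + U_1^*P_2U_1 + \cdots\,)$ up to lower-order corrections; matching it to $zI$ then simultaneously pins down $U_1\cdots U_n = I$ (condition (2)) and the telescoping sum $=I$ (condition (4)), and one checks that the vanishing of all other coefficients is automatic once (1)–(4) hold. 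I expect the cleanest way to organize this is by induction on $n$, peeling off $V_n$ and using that $V_1\cdots V_{n-1}$ is the "product isometry" of a shorter tuple.

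Finally, the inequality $P_j+U_j^*P_iU_j\le I$ in (3) (equivalently $P_i+U_i^*P_jU_i\le I$) and the fact that (4) forces this should be noted: from (4), each summand is a positive operator and the total is $I$, so any partial sum — in particular the one equal to $P_i + U_i^*P_jU_i$ after the identification in (3) — is bounded by $I$; alternatively $P_j+U_j^*P_iU_j\le I$ is precisely the condition that the block operator $\begin{bmatrix}U_jP_j^\perp & U_jP_j\end{bmatrix}$-type symbol is contractive, i.e. that $V_i$ is genuinely an isometry. So I would either derive the $\le I$ in (3) from (4), or verify it directly as part of checking each $V_i$ is an isometry. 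The main obstacle I anticipate is purely bookkeeping: correctly tracking the noncommuting ordered products of the $P_i$'s and $U_i$'s in the expansion of $\Psi(z)$ and matching the symbol coefficients — the structural idea (symbols agree iff operators agree, plus induction on $n$) is routine, but the algebra in conditions (1)–(4) is where the real work lies, and one must be careful that the equivalences go in both directions. Since the statement asserts this is "a variant of the model due to Bercovici, Douglas and Foias" with the proof following from Lemma 2.2 of \cite{Berc:Dou:Foi}, I would in practice cite that lemma for the hard direction and only record the short verification that their hypotheses match conditions (1)–(4).
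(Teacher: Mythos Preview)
The paper does not give its own proof of this theorem: it simply attributes the result to Bercovici--Douglas--Foias and says a proof follows from Lemma~2.2 of \cite{Berc:Dou:Foi} and the discussion there. Your final recommendation --- cite that lemma and verify that their hypotheses match conditions (1)--(4) --- is exactly what the paper does, and your symbol-matching sketch is the natural direct route (indeed it is precisely how the paper proves the two-factor product case in Lemma~\ref{BDF lemma 1}).

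One small correction to your sketch: the commutativity $U_iU_j=U_jU_i$ does \emph{not} follow from the $C_0$ and $C_2$ identities alone (take $P_i=I$, $P_j=0$: both $C_0$ and $C_2$ become $0=0$, and it is $C_1$ that carries $U_iU_j=U_jU_i$). The clean way is to add all three identities $C_0+C_1+C_2$, which telescopes to $U_iU_j=U_jU_i$; then, as you say, the equality in (3) drops out. Also, as you correctly suspect, the inequality $\le I$ in (3) does not come from pairwise commutativity --- it genuinely uses (4) (the summands in (4) are projections totalling $I$, hence mutually orthogonal, so any two of them sum to $\le I$; the equality part of (3) then lets you identify $P_i+U_i^*P_jU_i$ with such a partial sum after reordering via (1)).
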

		The following result is a corollary of Theorem \ref{BDF lemma O}. In other words, this can be treated as a variant of Theorem \ref{BDF lemma O}. We state it here so that we can directly apply it later.
		
		\begin{thm} \label{BDF lemma}
			Let $U_1,\ldots ,U_n$ be unitaries on Hilbert space $\mathcal{H}$  and $P_1,\ldots ,P_n$ be orthogonal projections on $\mathcal{H}$. For each $1\leq i \leq n$,  let $V_i=M_{P_i^{\perp}U_i^*+zP_iU_i^*}$. Then $(V_1,\ldots ,V_n)$ defines a commuting $n$-tuple of isometries with $\Pi_{i=1}^nV_i=M_z$ if and only if the following conditions are satisfied.
			\begin{enumerate}
				\item $U_iU_j=U_jU_i$ for all $1\leq i <j \leq n$,
				\item $U_1\ldots U_n=I_{\mathcal{H}}$,
				\item $P_j+U_j^*P_iU_j=P_i+U_i^*P_jU_i \leq I_{\mathcal{H}}$, for all $i\neq j$ and 
				\item  $P_1+U_1^*P_2U_1+U_1^*U_2^*P_3U_2U_1+\ldots +U_1^*U_2^*\ldots U_{n-1}^*P_nU_{n-1}\ldots U_2U_1 =I_{\mathcal{H}}$.
\end{enumerate} 
		\end{thm}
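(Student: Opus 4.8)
The plan is to derive Theorem~\ref{BDF lemma} from Theorem~\ref{BDF lemma O} by a straightforward change of variables, since the two statements differ only in the form of the multipliers: in Theorem~\ref{BDF lemma O} the $i$-th isometry is $M_{U_iP_i^\perp+zU_iP_i}$, whereas here it is $M_{P_i^\perp U_i^*+zP_iU_i^*}$. First I would observe that the symbol of each $V_i$ in the present statement is the adjoint-shaped companion of the one in Theorem~\ref{BDF lemma O}: writing $\widetilde U_i=U_i^*$ we have $P_i^\perp U_i^*+zP_iU_i^* = P_i^\perp\widetilde U_i+zP_i\widetilde U_i$, which is $M_{\widetilde U_i'P_i^\perp+z\widetilde U_i'P_i}$ only after we also swap the side on which the unitary sits. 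So the cleanest route is to apply Lemma~\ref{BDF lemma 1} (the twisted two-factor version) repeatedly, rather than to invoke Theorem~\ref{BDF lemma O} verbatim.

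Here is the concrete order of steps. Assume first that $(V_1,\dots,V_n)$ is a commuting tuple of isometries with $\prod_{i=1}^n V_i=M_z$. Commutativity of $V_i$ and $V_j$ at the level of symbols, together with matching the constant, linear and quadratic coefficients in $z$ exactly as in the proof of Lemma~\ref{BDF lemma 1}, forces $U_iU_j=U_jU_i$ and $P_i+U_i^*P_jU_i=P_j+U_j^*P_iU_j$; the inequality $P_i+U_i^*P_jU_i\le I$ comes from the fact that $V_{U,P}:=V_iV_j$ is itself an isometry of the stated form, so its ``projection part'' $P_i+U_i^*P_jU_i$ must be a genuine orthogonal projection, hence $\le I$. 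Next, iterating Lemma~\ref{BDF lemma 1}, the product $V_1V_2\cdots V_n$ has symbol $M_{Q^\perp W^*+zQW^*}$ with $W^*=U_1^*U_2^*\cdots U_n^*$ and $Q=P_1+U_1^*P_2U_1+U_1^*U_2^*P_3U_2U_1+\dots+U_1^*\cdots U_{n-1}^*P_nU_{n-1}\cdots U_1$ (this is the $n$-fold analogue of the two-factor formula $P=P_1+U_1^*P_2U_1$, proved by an easy induction on $n$). Setting this equal to $M_z$ forces $Q^\perp W^*=0$ and $QW^*=W^*$, i.e. $W=I_{\mathcal H}$ (so condition (2)) and $Q=I_{\mathcal H}$ (so condition (4)). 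For the converse, one runs the same identities backwards: conditions (1)--(4) guarantee, via the iterated Lemma~\ref{BDF lemma 1}, that each pairwise product $V_iV_j=V_jV_i$ and that $\prod_{i=1}^n V_i=M_{I^\perp\cdot I+zI\cdot I}=M_z$, and that each $V_i$ is an isometry (its symbol is inner, being $U_i^*$ times a symbol of the form $P_i^\perp+zP_i$ which is a pointwise-unitary-valued function on $\mathbb T$).

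The only mildly delicate point, and the one I would write out carefully, is the bookkeeping in the $n$-fold iteration of Lemma~\ref{BDF lemma 1}: one must track how the projection transforms under conjugation by the accumulated unitary at each stage, so that the telescoping sum in condition (4) comes out in exactly the stated nested form $P_1+U_1^*P_2U_1+\cdots$. This is purely a routine induction — grouping $V_1\cdots V_n$ as $(V_1\cdots V_{n-1})V_n$, applying the two-factor lemma once with $U_1\leftarrow U_1\cdots U_{n-1}$ and $P_1\leftarrow$ (the inductively-built nested sum), and simplifying — but it is where a sign or a conjugation could easily be mishandled, so care is needed there. Everything else is coefficient-matching in $z$ of degree at most $2$, identical in spirit to the computation already carried out in the proof of Lemma~\ref{BDF lemma 1}. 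Since the statement is explicitly flagged as ``just a variant of Theorem~\ref{BDF lemma O}'', I would keep the write-up short, citing Lemma~\ref{BDF lemma 1} for the two-factor case and Theorem~\ref{BDF lemma O} (equivalently, Lemma~2.2 of \cite{Berc:Dou:Foi}) for the structural input, and presenting only the induction and the final identification $W=I$, $Q=I$.
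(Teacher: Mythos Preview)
The paper gives no proof of Theorem~\ref{BDF lemma} at all: it simply declares the result ``just a variant of Theorem~\ref{BDF lemma O}'' and states it for later use. Your plan is exactly the natural way to certify this variant, and it is correct. The operators $V_i=M_{P_i^\perp U_i^*+zP_iU_i^*}$ are precisely the $V_{U_i,P_i}$ of Lemma~\ref{BDF lemma 1}, so iterating that lemma yields $\prod_{i=1}^n V_i=V_{U_1\cdots U_n,\,Q}$ with $Q$ the nested sum appearing in condition~(4); identifying this with $M_z=V_{I,I}$ gives (2) and (4), while coefficient-matching in $V_iV_j=V_jV_i$ gives (1) and the equality in (3). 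Your justification of the inequality in (3) is also right in substance: once one knows $(V_iV_j)\cdot\prod_{k\neq i,j}V_k=M_z$, BCL-type structure forces $V_iV_j$ to be of the form $V_{U,P}$ with $P$ a genuine projection, which is equivalent to $P_i$ and $U_i^*P_jU_i$ being mutually orthogonal, i.e.\ $P_i+U_i^*P_jU_i\le I$. So your write-up supplies what the paper leaves implicit, via the same circle of ideas (Lemma~\ref{BDF lemma 1} and the Bercovici--Douglas--Foias lemma) that underlie Theorem~\ref{BDF lemma O}.
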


Now we present a twisted version of Lemma 2.2 in \cite{Berc:Dou:Foi}. This will be used in the proof of the main theorem. For the sake of completeness we give a proof here and it goes along with similar arguments as in the proof of Lemma 2.2 in \cite{Berc:Dou:Foi}.
  
		\begin{lem}\label{BDF lemma 1}
			Consider unitary operators $U,U_1,U_2$ and orthogonal projections $P,P_1$ and $P_2$ on Hilbert space $\mathcal{H}$. If $V_{U,P},V_{U_1,P_1}$ and $V_{U_2,P_2}$ on $H^2(\mathcal{H})$ are defined as $V_{U,P}=M_{P^{\perp}U^*+zPU^*}$, $V_{U_1,P_1}=M_{P_1^{\perp}U_1^*+zP_1U_1^*}$ and $V_{U_2,P_2}=M_{P_2^{\perp}U_2^*+zP_2U_2^*}$, then the following are equivalent: 
			\begin{enumerate}
				\item[(i)] $ V_{U,P}=V_{U_1,P_1}V_{U_2,P_2} $,
				\item[(ii)] $U=U_2U_1  $ and $P=P_1+U_1^*P_2U_1$.
			\end{enumerate}
		\end{lem}
		\begin{proof}
			We prove only $(i)\implies (ii)$, the proof of $(ii) \implies (i)$ follows trivially. From $(i)$ we have $ V_{U,P}=V_{U_1,P_1}V_{U_2,P_2} $ and thus 
			\begin{align*} 
				&P^{\perp}U^*+zPU^*=(P_1^{\perp}U_1^*+zP_1U_1^*)(P_2^{\perp}U_2^*+zP_2U_2^*)\\
				\text{i.e.}\quad  &P^{\perp}U^*+zPU^* =P_1^{\perp}U_1^*P_2^{\perp}U_2^*+z(P_1U_1^*P_2^{\perp}U_2^*+P_1^{\perp}U_1^*P_2U_2^*)+z^2P_1U_1^*P_2U_2^*.
			\end{align*}
			Hence we have
			\begin{enumerate}
				\item $P^{\perp}U^*=P_1^{\perp}U_1^*P_2^{\perp}U_2^*$ ;
				\item $PU^*=P_1U_1^*P_2^{\perp}U_2^*+P_1^{\perp}U_1^*P_2U_2^* $ ;
				\item $P_1U_1^*P_2U_2^*=0$.
			\end{enumerate}
			From $(2)$ and $(3)$ by substituting $P_i^{\perp}=I-P_i$, we obtain, \begin{equation}\label{PU*}
				PU^*=U_1^*P_2U_2^*+P_1U_1^*U_2^*
			\end{equation}
			From $(1)$ and $(3)$ by substituting $P_i^{\perp}=I-P_i$, we obtain, 
			\begin{equation}\label{U*-PU*}
				U^*-PU^*=U_1^*U_2^*-U_1^*P_2U_2^*-P_1U_1^*U_2^*
			\end{equation}
			Hence, \eqref{PU*} and \eqref{U*-PU*} give us $U=U_2U_1$. Again, multiplying \eqref{PU*} from right by $U_2U_1$ and substituting $U=U_2U_1$ we obtain $P=P_1+U_1^*P_2U_1$. The proof is now complete. 
			  
		\end{proof}

We now present a Sch$\ddot{a}$ffer-type minimal isometric dilation for a tuple of commuting contractions and this is one of the main results of this paper. However, the proof of this theorem is going to be lengthy and so we will split the proof into several parts. We request the readers to kindly bear with us for once.		
		
	\begin{thm}\label{main}
	Let $T_1,\ldots,T_n\in \mathcal{B}(\mathcal{H})$ be commuting contractions and let $T={ \prod_{i=1}^n T_i}$. 
	\begin{itemize}
		\item[(a)] If $\mathcal{K}$ is the minimal isometric dilation space of $T$, then $(T_1,\ldots ,T_n)$ possesses an isometric dilation $(V_1,\ldots,V_{n})$ on $\mathcal{K}$ with $V=\Pi_{i=1}^nV_i$ being the minimal isometric dilation of $T$ if and only if there are unique orthogonal projections $P_1,\ldots ,P_n$ and unique commuting unitaries $U_1,\ldots ,U_n$ in $\mathcal{B}(\mathcal{D}_T)$ with $\prod_{i=1}^n U_i=I_{\mathcal{D}_{T}}$ such that the following conditions are satisfied for each $i=1, \dots, n$:
		\begin{enumerate} 
			\item $D_TT_i=P_i^{\perp}U_i^*D_T+P_iU_i^*D_TT$ , 
			\item  $P_i^{\perp}U_i^*P_j^{\perp}U_j^*=P_j^{\perp}U_j^*P_i^{\perp}U_i^*$ ,
			\item $U_iP_iU_jP_j=U_jP_jU_iP_i$ ,
			\item $D_TU_iP_iU_i^*D_T=D_{T_i}^2$ ,
			\item  $P_1+U_1^*P_2U_1+U_1^*U_2^*P_3U_2U_1+\ldots +U_1^*U_2^*\ldots U_{n-1}^*P_nU_{n-1}\ldots U_2U_1 =I_{\mathcal{D}_T}$.
		\end{enumerate}
		\vspace{2mm}

		\item[(b)] Such an isometric dilation is minimal and unique in the sense that if $(W_1, \dots , W_n)$ on $\mathcal K_1$ and $(Y_1, \dots , Y_n)$ on $\mathcal K_2$ are two isometric dilations of $(T_1, \dots , T_n)$ such that $W=\prod_{i=1}^nW_i$ and $Y=\prod_{i=1}^nY_i$ are minimal isometric dilations of $T$ on $\mathcal K_1$ and $\mathcal K_2$ respectively, then there is a unitary $\widetilde{U}:\mathcal K_1 \rightarrow \mathcal K_2$ such that $(W_1, \dots , W_n)=(\widetilde{U}^*Y_1\widetilde{U}, \dots , \widetilde{U}^*Y_n\widetilde{U})$.
	\end{itemize}			
	
\end{thm}

		\begin{proof}
		   
	\textbf{(a).} \textbf{(\textit{The $\Leftarrow$ part}).} Suppose there are projections $P_1,\ldots,P_n$ and commuting unitaries $U_1,\ldots ,U_n$ in $\mathcal{B}(\mathcal{D}_T)$ with $\prod_{i=1}^n U_i=I$ satisfying the operator identities $(1)-(5)$ for $1\leq i \leq n$. We first show that conditions $(1)-(4)$ guaranttee the existence of an isometric dilation of $(T_1, \dots , T_n)$. In fact, we shall construct a co-isometric extension of $(T_1^*, \dots , T_n^*)$. It is well-known from Sz.-Nagy-Foias theory (see \cite{Nagy}) that any two minimal isometric dilations of a contraction are unitarily equivalent. Thus, without loss of generality we consider the Sch$\ddot{a}$ffer's minimal isometric dilation space $\mathcal K_0$ of $T$, where $\mathcal{K}_0=\mathcal{H}\oplus l^2(\mathcal{D}_{T})=\HS \oplus \mathcal D_T \oplus \mathcal D_T \oplus \ldots$ and construct an isometric dilation on $\mathcal K_0$ for $(T_1, \dots , T_n)$. Define $V_i$ on $\mathcal{K}_0$ as follows:
		  \begin{equation} \label{eqn:isom-01}
		  V_i=\begin{bmatrix}
		  	T_i&0&0&0&\ldots \\
		  	P_iU_i^*D_T&P_i^{\perp}U_i^*&0&0&\ldots \\
		  	0&P_iU_i^*&P_i^{\perp}U_i^*&0&\ldots \\
		  	0&0&P_iU_i^*&P_i^{\perp}U_i^*&\ldots \\
		  	\ldots & \ldots & \ldots & \ldots & \ldots
		  \end{bmatrix}, \qquad 1\leq i \leq n. 
		  \end{equation}
		  It is evident from the block-matrix form that $V_i^*|_{\HS} =T_i^*$ for each $i=1, \ldots , n$.\\
		  
		  \noindent \textbf{\textit{Step 1.}} First we prove that $(V_1,\ldots ,V_n)$ is a commuting tuple. For each $i,j$ we have that	
		  \[
			V_iV_j=
			\begin{bmatrix}
					T_iT_j&0&0&\ldots\\
					P_iU_i^*D_TT_j+P_i^{\perp}U_i^*P_jU_j^*D_T & P_i^{\perp}U_i^*P_j^{\perp}U_j^* &0&\ldots \\					P_iU_i^*P_jU_j^*D_T&P_iU_i^*P_j^{\perp}U_j^*+P_i^{\perp}U_i^*P_jU_j^*&P_i^{\perp}U_i^*P_{j}^{\perp}U_j^*&\ldots \\					0&P_iU_i^*P_jU_j^*&P_iU_i^*P_j^{\perp}U_j^*+P_i^{\perp}U_i^*P_jU_j^*& \ldots \\
					\ldots & \ldots & \ldots & \ddots
				\end{bmatrix}
			\]
			and
			\[
			  V_jV_i=
				\begin{bmatrix}
					T_jT_i&0&0&\ldots\\
					P_jU_j^*D_TT_i+P_j^{\perp}U_j^*P_iU_i^*D_T&P_j^{\perp}U_j^*P_i^{\perp}U_i^*&0&\ldots \\					P_jU_j^*P_iU_i^*D_T&P_jU_j^*P_i^{\perp}U_i^*+P_j^{\perp}U_j^*P_iU_i^*&P_j^{\perp}U_j^*P_{i}^{\perp}U_i^*&\ldots \\					0&P_jU_j^*P_iU_i^*&P_jU_j^*P_i^{\perp}U_i^*+P_j^{\perp}U_j^*P_iU_i^*& \ldots \\
\ldots&\ldots&\ldots &\ddots
\end{bmatrix}.
\]
Using the commutativity of $T_i,T_j$ and $U_i,U_j$ and applying condition-(3) of the theorem, we obtain by simplifying the condition-(2), i.e. $
		 	P_i^{\perp}U_i^*P_j^{\perp}U_j^*=P_j^{\perp}U_j^*P_i^{\perp}U_i^* $,
		  that \begin{equation}\label{identity 1} P_iU_i^*U_j^*+U_i^*P_jU_j^*=P_jU_j^*U_i^*+U_j^*P_iU_i^*. 
	  \end{equation}
	   Now we show that 
		  \begin{equation}\label{identity 2}
		  	P_jU_j^*P_i^{\perp}U_i^*+P_j^{\perp}U_j^*P_iU_i^*=P_iU_i^*P_j^{\perp}U_j^*+P_i^{\perp}U_i^*P_jU_j^*
		  \end{equation}
		  and
		  \begin{equation} \label{eqn:new-02}
		  P_jU_j^*D_TT_i+P_j^{\perp}U_j^*P_iU_i^*D_T=P_iU_i^*D_TT_j+P_i^{\perp}U_i^*P_jU_j^*D_T.
		  \end{equation}
		  
For showing (\ref{identity 2}), we see that
	  \begin{align*}
	  	P_jU_j^*P_i^{\perp}U_i^*+P_j^{\perp}U_j^*P_iU_i^*
	  &	= P_jU_j^*(U_i^*-P_iU_i^*)+(U_j^*-P_jU_j^*)P_iU_i^*\\
	  &	= P_jU_j^*U_i^*-P_jU_j^*P_iU_i^*+U_j^*P_iU_i^*-P_jU_j^*P_iU_i^*\\
	  &	= P_iU_i^*U_j^*-P_iU_i^*P_jU_j^*+U_i^*P_jU_j^*-P_iU_i^*P_jU_j^*\\
	  &	= P_iU_i^*P_j^{\perp}U_j^*+P_i^{\perp}U_i^*P_jU_j^*.
	  \end{align*}
 Note that the second last equality follows from equation (\ref{identity 1}) and condition-(3) of Theorem \ref{main}.

\smallskip

	To prove (\ref{eqn:new-02}), we use a few conditions of Theorem \ref{main} here. We have
\begin{align*}
P_jU_j^*D_TT_i+P_j^{\perp}U_j^*P_iU_i^*D_T
				& = P_jU_j^*(P_i^{\perp}U_i^*D_T+P_iU_i^*D_TT)+P_j^{\perp}U_j^*P_iU_i^*D_T \;\;\; [\text{by condition-(1)}] \\
				& = (P_jU_j^*P_i^{\perp}U_i^*+P_j^{\perp}U_j^*P_iU_i^*)D_T+P_jU_j^*P_iU_i^*D_TT\\
				& = (P_iU_i^*P_j^{\perp}U_j^*+P_i^{\perp}U_i^*P_jU_j^*)D_T+P_iU_i^*P_jU_j^*D_TT \\
				& \hspace{67mm} [\text{by } (\ref{identity 2})\text{ and condition-}(3) ]  \\
				& = P_iU_i^*(P_j^{\perp}U_j^*D_T+P_jU_j^*D_TT)+P_i^{\perp}U_i^*P_jU_j^*D_T\\
				& = P_iU_i^*D_TT_j+P_i^{\perp}U_i^*P_jU_j^*D_T. \hspace{27mm} [\text{by condition-(1)}]
			\end{align*}
		  
Hence it follows that  $V_jV_i=V_iV_j$ for all $ i,j $ and consequently $(V_1,\ldots ,V_n)$ is a commuting tuple.\\
			
			\noindent \textbf{\textit{Step 2.}} We now prove that each $V_j$ is an isometry and that $(V_1, \dots , V_n)$ is an isometric dilation of $(T_1, \dots , T_n)$. Note that		     
	   \[
	   	V_j^*V_j=	\begin{bmatrix}
	   		T_j^*T_j+D_TU_jP_jU_j^*D_T&0&0&\ldots\\
	   		U_jP_j^{\perp}P_jU_j^*D_T&U_jP_j^{\perp}U_j^*+U_jP_jU_j^*&U_jP_jP_j^{\perp}U_j &\ldots \\
	   		0&U_jP_j^{\perp}P_jU_j^*&U_jP_j^{\perp}U_j^*+U_jP_jU_j^*&\ldots  \\
	   		0&0&U_jP_j^{\perp}P_jU_j^*& \ldots   \\
	   		\ldots&\ldots&\ldots &\ddots
	   	\end{bmatrix}.
	   \]
By condition-(4), we have $T_j^*T_j+D_TU_jP_jU_j^*D_T=I$. Also, the identities $U_jP_j^{\perp}U_j^*+U_jP_jU_j^*=I$ and $U_jP_jP_j^{\perp}U_j^*=U_jP_j^{\perp}P_jU_j^*=0$ follow trivially. Thus, we have that $V_j^*V_j=I$ and hence $V_j$ is an isometry for $1\leq j \leq n$. It is evident from the block matrix of $V_i$ that $V_i^*|_{\mathcal{H}}=T_i^*$ and thus $(V_1,\ldots ,V_n)$ on $\mathcal{K}_0$ is an isometric dilation of $(T_1,\ldots ,T_n)$.\\

\noindent \textbf{\textit{Step 3.}} It remains to show that $\prod_{i=1}^nV_i=V$, where $V$ on $\mathcal{K}_0$ is the Sch$\ddot{a}$ffer's minimal isometric dilation of $T$. Note that $V$ has the block-matrix $V=\begin{bmatrix}
	T&0\\
	C&S
\end{bmatrix}$ with respect to the decomposition $\mathcal K_0=\mathcal{H}\oplus l^2(\mathcal{D}_T)$, where 
\[
 C=\begin{bmatrix}
	D_T\\0\\0\\ \vdots
\end{bmatrix}:\mathcal{H}\to l^2(\mathcal{D}_T) \text{ and } S=\begin{bmatrix}
	0&0&0&\cdots\\
	I&0&0&\cdots\\
	0&I&0&\cdots\\
	\vdots&\vdots&\vdots&\ddots
\end{bmatrix}:l^2(\mathcal{D}_T)\to l^2(\mathcal{D}_T).
\]	
Similarly for all $1\leq i \leq n$, $V_i$ has the following matrix form with respect to the decomposition $\mathcal{K}_0=\mathcal{H}\oplus l^2(\mathcal{D}_T)$:
\[ V_i=\begin{bmatrix}
	T_i&0\\
	\widetilde{C}_i & \widetilde{S}_i
\end{bmatrix},
\]
where
\[
\widetilde{C}_i=\begin{bmatrix}
	P_iU_i^*D_T\\0\\0\\ \vdots
\end{bmatrix}:\mathcal{H}\to l^2(\mathcal{D}_T) \text{ and } \widetilde{S}_i=\begin{bmatrix}
	P_i^{\perp}U_i^*&0&0&\cdots\\
	P_iU_i^*&P_i^{\perp}U_i^*&0&\cdots\\
	0&P_iU_i^*&P_i^{\perp}U_i^*&\cdots\\
	\vdots&\vdots&\vdots&\ddots
\end{bmatrix}:l^2(\mathcal{D}_T)\to l^2(\mathcal{D}_T).
\]
Up to a unitary $S\equiv M_z$ and $\widetilde{S}_i \equiv M_{P_i^{\perp}U_i^*+P_iU_i^*z}$ for $1\leq i \leq n$ on $H^2(\mathcal{D}_T)$. Further \eqref{identity 1} gives $P_i+U_i^*P_jU_j=P_j+U_j^*P_iU_j$. The conditions $(1)-(4)$ of Theorem \ref{BDF lemma} follow from the hypotheses of this theorem. Therefore, we have $\prod_{i=1}^n M_{P_i^{\perp}U_i^*+zP_iU_i^*}=M_z$ and consequently $S=\prod_{i=1}^n \widetilde{S}_i $. As observed by Bercovici, Douglas and Foias in \cite{Berc:Dou:Foi}, the terms involved in condition-(5) are all mutually orthogonal projections. This is because sum of projections $Q_1,Q_2$ is  again a projection if and only if they are mutually orthogonal. Now, suppose
\[
\underline{T_k}=T_1\ldots T_k ,\;\underline{U_k}=U_1\ldots U_k,\; \underline{P_k}=P_1+\underline{U_1}^*P_2\underline{U_1}+\ldots +\underline{U_{k-1}}^*P_k\underline{U_{k-1}}.
\]
Then clearly each $\underline{U_k}$ is a unitary and $\underline{P_k}$ is a projection. Let us define
	\[
	V_{\underline{T_k},\underline{U_k},\underline{P_k}}=\begin{bmatrix}
		\underline{T_k}&0&0&0&\ldots \\
		\underline{P_k}\;\underline{U_k}^*D_T&\underline{P_k}^{\perp}\underline{U_k}^*&0&0&\ldots \\
		0&\underline{P_k}\;\underline{U_k}^*&\underline{P_k}^{\perp}\underline{U_k}^*&0&\ldots \\
		0&0&\underline{P_k}\;\underline{U_k}^*&\underline{P_k}^{\perp}\underline{U_k}^*&\ldots \\
		\ldots & \ldots & \ldots & \ldots & \ldots
	\end{bmatrix}, \qquad 1\leq k \leq n. 
	\]    
	We prove that $V_{\underline{T_k},\underline{U_k},\underline{P_k}}V_{T_{k+1},U_{k+1},P_{k+1}}=V_{\underline{T_{k+1}},\underline{U_{k+1}},\underline{P_{k+1}}}$, 
	for all $1\leq k \leq n-1$. Note that for all $1\leq k \leq n$, $V_{\underline{T_k},\underline{U_k},\underline{P_k}}$ has the following block-matrix form with respect to the decomposition $\mathcal{K}_0=\mathcal{H}\oplus l^2(\mathcal{D}_T)$:
	\[ V_{\underline{T_k},\underline{U_k},\underline{P_k}}=\begin{bmatrix}
		\underline{T_k}&0\\
		\underline{C_k}&\underline{S_k}
	\end{bmatrix}, \] where \[ \underline{C_k}=\begin{bmatrix}
		\underline{P_k}\;\underline{U_k}^*D_T\\0\\0\\ \vdots
	\end{bmatrix}:\mathcal{H}\to l^2(\mathcal{D}_T) \text{ and } \underline{S_k}=\begin{bmatrix}
		\underline{P_k}^{\perp}\underline{U_k}^*&0&0&\cdots\\
		\underline{P_k}\;\underline{U_k}^*&\underline{P_k}^{\perp}\underline{U_k}^*&0&\cdots\\
		0&\underline{P_k}\;\underline{U_k}^*&\underline{P_k}^{\perp}\underline{U_k}^*&\cdots\\
		\vdots&\vdots&\vdots&\ddots
	\end{bmatrix}:l^2(\mathcal{D}_T)\to l^2(\mathcal{D}_T).
	\]
	It is clear from the definition that $\underline{U_{k+1}}=\underline{U_k}U_{k+1}$ and  $\underline{P_{k+1}}=\underline{P_k}+\underline{U_k}^*P_{k+1}\underline{U_k}$. Hence Lemma \ref{BDF lemma 1} tells us that $\underline{S_{k+1}}=\underline{S_k}S_{k+1}$. From the construction, it is clear that $\underline{T_{k+1}}=\underline{T_k}T_{k+1}$. Hence for proving $V_{\underline{T_k},\underline{U_k},\underline{P_k}}V_{T_{k+1},U_{k+1},P_{k+1}}=V_{\underline{T_{k+1}},\underline{U_{k+1}},\underline{P_{k+1}}}$, it suffices to prove $\underline{C_k}T_{k+1}+\underline{S_k}C_{k+1}=\underline{C_{k+1}}$. Here \[\underline{C_k}T_{k+1}+\underline{S_k}C_{k+1}= \begin{bmatrix}
		\underline{P_k}\;\underline{U_k}^*D_{T}T_{k+1}+\underline{P_k}^{\perp}\underline{U_k}^*P_{k+1}U_{k+1}^*D_{T}\\\underline{P_k}\;\underline{U_k}^*P_{k+1}U_{k+1}^*D_{T}\\0\\ \vdots
	\end{bmatrix}. \]
	Note that $\underline{S_{k+1}}=\underline{S_k}S_{k+1}$ gives
	\[
	\underline{P_k}\;\underline{U_k}^*P_{k+1}U_{k+1}^*=0 \quad \& \quad \underline{P_k}^{\perp}\underline{U_k}^*P_{k+1}U_{k+1}^*+\underline{P_k}\;\underline{U_k}^*P_{k+1}^{\perp}U_{k+1}^*=\underline{P_{k+1}}\;\underline{U^*_{k+1}}.
	\]
	Using these two equations and condition-$(1)$ of this theorem we obtain the following:
	\begin{align}
		\underline{P_k}\;\underline{U_k}^*D_{T}T_{k+1}+\underline{P_k}^{\perp}\underline{U_k}^*P_{k+1}U_{k+1}^*D_{T}&=\underline{P_k}\;\underline{U_k}^*D_{T}T_{k+1}+\underline{P_{k+1}}\;\underline{U_{k+1}}^*D_T-\underline{P_k}\;\underline{U_k}^*P_{k+1}^{\perp}U_{k+1}^*D_T   \notag  \\
		&=\underline{P_k}\;\underline{U_k}^*(D_{T}T_{k+1}-P_{k+1}^{\perp}U_{k+1}^*D_T)+\underline{P_{k+1}}\;\underline{U_{k+1}}^*D_T   \notag  \\
		&=\underline{P_k}\;\underline{U_k}^*P_{k+1}U_{k+1}^*D_TT+\underline{P_{k+1}}\;\underline{U_{k+1}}^*D_T \notag \\
		&=\underline{P_{k+1}}\;\underline{U_{k+1}}^*D_T.  \label{eqn:new-01}
	\end{align}	
This proves that $\underline{C_k}T_{k+1}+\underline{S_k}C_{k+1}=\underline{C_{k+1}}$ and hence $V_{\underline{T_k},\underline{U_k},\underline{P_k}}V_{T_{k+1},U_{k+1},P_{k+1}}=V_{\underline{T_{k+1}},\underline{U_{k+1}},\underline{P_{k+1}}}$ for all $1\leq k \leq n-1$. Therefore, by induction we have that $V_{\underline{T_n},\underline{U_n},\underline{P_n}}=V_1\ldots V_n $. Note that we have $ \underline{T_n}=T$, $\underline{U_n}=U_1\ldots U_n=I $. Also, it follows from condition-$(5)$ that $\underline{P_n}=I $. Therefore, $V_{\underline{T_n},\underline{U_n},\underline{P_n}}=V$, where $V$ is the Sch$\ddot{a}$ffer's minimal isometric dilation. Hence $\prod_{i=1}^n V_i=V$.\\

\noindent \textbf{\textit{Step-4.}} We now show that such an isometric dilation $(V_1, \dots , V_n)$ is minimal. Note that $V= \prod_{i=1}^n V_i$ is a minimal isometric dilation of $T= \prod_{i=1}^n T_i$. Therefore,
        \[
       \mathcal K = \overline{Span}\, \{ V^kh\,:\, h \in \mathcal H ,\, k \in \mathbb N \cup \{0 \}\, \}=  \overline{Span}\, \{ V_1^k\dots V_n^kh\,:\, h \in \mathcal H ,\, k \in \mathbb N \cup \{0 \}\, \}.
        \]
        Again
        \[
        Span\, \{ V_1^{k_1}\dots V_n^{k_n}h\,:\, h \in \mathcal H ,\, k_1,\dots , k_n \in \mathbb N \cup \{0 \}\, \} \subseteq \mathcal K.
        \]
        Therefore, 
        \begin{align*}
       \mathcal K & = \overline{Span}\, \{ V_1^k\dots V_n^kh\,:\, h \in \mathcal H ,\, k \in \mathbb N \cup \{0 \}\, \} \\
       & = \overline{Span}\, \{ V_1^{k_1}\dots V_n^{k_n}h\,:\, h \in \mathcal H ,\, k_1,\dots , k_n \in \mathbb N \cup \{0 \}\, \},
        \end{align*}
        and consequently $(V_1, \dots , V_n)$ is a minimal isometric dilation of $(T_1, \dots , T_n)$.\\

\noindent \textbf{(The $\Rightarrow$ part).} Let $(W_1,\ldots ,W_n)$ on $\mathcal{K}$ be an isometric dilation of $(T_1,\ldots ,T_n)$ such that $W=\Pi_{i=1}^nW_i$ is the minimal isometric dilation of $T$. Then $\mathcal K=\HS \oplus \HS^{\perp}$. Suppose $W_i'=\prod_{j \neq i}W_j$ for $1 \leq i \leq n$. So,
\[
\mathcal K= \overline{span} \, \{W^nh\,:\, h\in \HS, \; n \in \mathbb N \cup \{ 0\} \}.
\]
Since $V$ on $\mathcal K_0=\HS \oplus l^2(\mathcal D_T)$ is the minimal Sch$\ddot{a}$ffer's isometric dilation of $T$, it follows that
\[
\mathcal K_0 = \ov{span} \{ V^nh\,:\, h\in \HS, \; n \in \mathbb N \cup \{ 0\}  \}.
\]
Therefore, the map $\tau: \mathcal K_0 \rightarrow \mathcal K$ defined by $\tau(V^nh)=W^nh$ is a unitary which is identity on $\HS$. Thus, $\HS$ is a reducing subspace for $\tau$ and consequently
$\tau =\begin{pmatrix}
I & 0 \\
0 & \tau_1
\end{pmatrix}
$
for some unitary $\tau_1$. Then $W=\tau V \tau^* = \begin{bmatrix}
T & 0 \\
\tau_1C & \tau_1 S\tau_1^*
\end{bmatrix},
$		 
where $V=\begin{bmatrix}
T & 0 \\
C & S
\end{bmatrix}
$
with
\[
C=\begin{bmatrix}
		D_T\\
			0\\
			0\\
			\vdots
		\end{bmatrix}:\mathcal{H}\to l^2(\mathcal{D}_T) \quad \text{ and } \quad S= \begin{bmatrix}
		0&0&0&\cdots \\
		I&0&0&\cdots\\
		0&I&0&\cdots\\
		\vdots&\vdots&\vdots&\ddots
	\end{bmatrix}: l^2(\mathcal{D}_T) \to l^2(\mathcal{D}_T).
	\]
Let us consider the	commuting tuple of isometries $(\widehat{V}_1, \dots , \widehat{V}_n)=(\tau^* W_1 \tau, \dots , \tau^* W_n \tau)$. Note that $\prod_{i=1}^n \widehat{V}_i= \tau^* W \tau = V$. Define $\widehat{V}_i'=\prod_{j\neq i} \widehat{V}_j$ for $1 \leq i \leq n$.
Evidently each $\widehat{V}_i'$	is an isometry and $\widehat{V}_i, \widehat{V}_j', V$ commute for all $i,j$. Also, $\widehat{V}_i=\widehat{V}_i'^*V$ for $i=1, \dots , n$. Suppose the block-matrix of $\widehat{V}_i$ with respect to the decomposition $\HS \oplus l^2(\mathcal D_P)$ be $\widehat{V}_i=\begin{bmatrix}
T_i&A_i\\
C_i&S_i
\end{bmatrix}$. Now by the commutativity of $\widehat{V}_i$ and $\widehat{V}$, we have 
\begin{align}
\begin{bmatrix}
	T_i&A_i\\
	C_i&S_i
\end{bmatrix}
\begin{bmatrix}
T&0\\
C&S
\end{bmatrix}&=\begin{bmatrix}
T&0\\
C&S
\end{bmatrix}
\begin{bmatrix}
T_i&A_i\\
C_i&S_i
\end{bmatrix} \notag \\
i.e. \; \; 
\begin{bmatrix}
	T_iT+A_iC&A_iS\\
	C_iT+S_iC&S_iS
\end{bmatrix}&=\begin{bmatrix}
TT_i&TA_i\\
CT_i+SC_i&CA_i+SS_i
\end{bmatrix}.  \label{eqn:c01}
\end{align}
Since $T_i$ and $T$ commute, considering the $(1,1)$ position we have $A_iC=0$. We now show that $A_i=0$. Suppose $A_i=(A_{i1},A_{i2}\ldots )$ on $\mathcal{D}_T\oplus \mathcal{D}_T\oplus \ldots$ . Then the fact that $A_iC=0$ implies $A_{i1}=0$ on $\mathcal D_T$. Again $A_iS=TA_i$ gives
\[
\begin{bmatrix}
	0&A_{i2}&A_{i3}&\cdots
\end{bmatrix} \begin{bmatrix}
0&0&\ldots \\
I&0&\ldots\\
0&I&\ldots\\
\vdots&\vdots&\ldots
\end{bmatrix}=\begin{bmatrix}
0&TA_{i2}&TA_{i3}&\cdots
\end{bmatrix}
\] which further implies that $A_{i2}=0$ and $A_{ik}=TA_{i(k-1)}$ for all $k\geq 3$. Hence, inductively we have $A_{ik}=0$. This proves that $A_i=0$. A similar argument holds if we consider the commutativity of $\widehat{V}_i'$ and $V$. Thus, with respect to the decomposition $\mathcal{K}=\mathcal{H}\oplus l^2(\mathcal{D}_T)$, $\widehat{V}_i$ and $\widehat{V}_i'$ have the following block-matrix forms 
		\begin{equation} \label{eqn:new-0311}
		\widehat{V}_i=
		\begin{bmatrix}
			T_i&0\\
			C_i&S_i
		\end{bmatrix} \,,\qquad
		\widehat{V}_i'=
		\begin{bmatrix}
		T_i'&0\\
		C_i'&S_i'
	\end{bmatrix} \qquad \text{with} \quad T_i'={ \prod_{i\neq j} T_j}   \qquad (1 \leq i \leq n),
		\end{equation}
	for some bounded operators $C_i, C_i'$ and $S_i, S_i'$. It follows from the commutativity of $\widehat{V}_i, \widehat{V}_i'$ with $V$ that $S_i,S_i'$ commute with $S$. Evidently $S=M_z$ on $H^2(\mathcal{D}_T)$ and by being commutants of $S$, $S_i=M_{\phi}$ and $S_i'=M_{\phi_i'}$ for some $\phi_i, \phi_i' \in H^{\infty}(\mathcal B(\mathcal D_T))$. The relation $\widehat{V}_i=\widehat{V}_i'^*V$ gives
	\begin{equation} \label{eqn:0e1}
		\begin{bmatrix}
		T_i&0\\
		C_i&S_i
	\end{bmatrix}=\begin{bmatrix}
	T_i'^*&C_i'^*\\
	0&S_i'^*
\end{bmatrix} \begin{bmatrix}
T&0\\
C&S
\end{bmatrix}= \begin{bmatrix}
T_i'^*T+C_i'^*C&C_i'^*S\\
S_i'^*C&S_i'^*S
\end{bmatrix} , 
\end{equation}
where $T_i'$ is as in (\ref{eqn:new-0311}). Form here we have the following identities for each $i=1, \dots , n$:
	\begin{enumerate}
	\item[$(a)$] $T_i-T_i'^*T=C_i'^*C$, 
	\item[$(b)$] $C_i=(M_{\phi_i'})^*C$,
	\item[$(c)$] $M_{\phi_i}=(M_{\phi_i'})^*M_z. $
\end{enumerate}
Again, $\widehat{V}_i'=\widehat{V}_i^*V$ leads to
	\begin{equation} \label{eqn:0e2}
		\begin{bmatrix}
		T_i'&0\\
		C_i'&S_i'
	\end{bmatrix}=\begin{bmatrix}
	T_i^*&C_i^*\\
	0&S_i^*
\end{bmatrix} \begin{bmatrix}
T&0\\
C&S
\end{bmatrix}= \begin{bmatrix}
T_i^*T+C_i^*C&C_i^*S\\
S_i^*C&S_i^*S
\end{bmatrix}  
\end{equation}
	and hence we have for each $i=1, \dots , n$:
	\begin{enumerate}
	\item[$(a')$] $T_i'-T_i^*T=C_i^*C$, 
	\item[$(b')$] $C_i'=(M_{\phi_i})^*C$,
	\item[$(c')$] $M_{\phi_i'}=(M_{\phi_i})^*M_z. $
\end{enumerate}
From $(c)$ above and considering the power series expansions of $\phi_i$ and $\phi_i'$, we have that $\phi_i(z)=F_i+F_i'^*z$ and $\phi_i'(z)=F_i'+F_i^*z$ , for some $F_i, F_i' \in \mathcal B(\mathcal D_T)$. Therefore,
\[
S_i=M_{\phi_i}=\begin{bmatrix}
F_i&0&0&\cdots \\
F_i'^*&F_i&0&\cdots\\
0&F_i'^*&F_i&\cdots\\
\vdots&\vdots&\vdots&\ddots 	
\end{bmatrix} \;\; \text{ and } \;\;
S_i'=M_{\phi_i'}=\begin{bmatrix}
F_i'&0&0&\cdots \\
F_i^*&F_i'&0&\cdots\\
0&F_i^*&F_i'&\cdots\\
\vdots&\vdots&\vdots&\ddots 	
\end{bmatrix}.
\]
From $(b)$ and $(b')$ we have that
\[C_i=S_i'^*C=\begin{bmatrix}
	F_i'^*D_T\\
	0\\
	0\\
	\vdots 
\end{bmatrix} \quad \& \quad C_i'= S_i^*C=\begin{bmatrix}
	F_i^*D_T\\
	0\\
	0\\
	\vdots 
\end{bmatrix}.
\]
The fact that $\widehat{V}_i$ and $\widehat{V}_i'$ are isometries give us $F_i'F_i=F_iF_i'=0$, $F_i^*F_i+F_i'F_i'^*=F_i'^*F_i'+F_iF_i^*=I$ and $D_TF_i'F_i'^*D_T=D_{T_i}^2$ for $1\leq i \leq n$. Again, by the commutativity of  $\widehat{V}_i, \widehat{V}_j$ and considering the $(2,2)$ entries of their $2 \times 2$ block-matrices, we have the commutativity of $\phi_i$ and $\phi_j$. From here we have $[F_i,F_j]=0$ and $[F_i^*,F_j']=[F_j^*, F_i']$ and they hold for all $1 \leq i,j \leq n$. Similarly, by the commutativity of $\widehat{V}_i',\widehat{V}_j' $ we have that $[F_i',F_j']=0$ for each $i,j$. Thus, combining all together we have the following identities for $1\leq i,j \leq n$:
\begin{align}
&(i) \;\; F_iF_j  = F_jF_i \;\; \& \;\; [F_i^*,F_j']=[F_j^*, F_i'] \label{eqn:c02} \\
&(ii) \; \; F_i'F_j'  =F_j'F_i' \label{eqn:c03}\\
&(iii)\;\; F_i'F_i =F_iF_i'=0 \label{eqn:c04}\\
&(iv) \;\; F_i^*F_i+F_i'F_i'^* =F_i'^*F_i'+F_iF_i^*=I \label{eqn:c05} \\
& (v)\;\; D_TF_i'F_i'^*D_T=D_{T_i}^2. \label{eqn:c06}
\end{align}
For $1\leq  i \leq n$, let us define
$
U_i=F_i^*+F_i',\;\; U_{i}'=F_i^*-F_i' \;\; \text{ and } \; \; P_i=\frac{1}{2}(I-U_{i}'^*U_{i}).
$ 
Applying the above identities involving $F_i$ and $F_i'$ we have that $U_i, U_i'$ are unitaries. Note that $F_i=(U_i^*+U_i'^*)/2$ and $F_i'=(U_i-U_i')/2$. Hence $F_i'F_i=0$ implies that $U_iU_i'^*=U_i'U_i^*$ and $F_iF_i'=0$ implies that $U_i^*U_i'=U_i'^*U_i$. Thus $P_i=\dfrac{1}{2}(I-U_i^*U_i')$. It follows from here that $P_i$ is a projection. It can be verified that $F_i'=U_iP_i, F_i=P_i^{\perp}U_i^*.$ 
From (\ref{eqn:c01}), we have $C_iT+S_iC=CT_i+SC_i$ for each $i$ and substituting the values of $C_i, S_i$ and $C$ we have
\begin{equation} \label{eqn:c07}
F_i'^*D_TT+F_iD_T= D_TT_i
\end{equation}
We now show that the unitaries $U_1, \dots , U_n$ commute. For each $i,j$ we have
\begin{align*}
U_iU_j= & (F_i^*+F_i')(F_j^*+F_j')\\
=&F_i^*F_j^*+(F_i^*F_j'+F_i'F_j^*)+F_i'F_j'\\
=&F_j^*F_i^*+(F_j^*F_i'+F_j'F_i^*)+F_j'F_i' \quad [\text{ by the second part of } (\ref{eqn:c02})]\\
=&U_jU_i.
\end{align*}
Thus, substituting $F_i'=U_iP_i$, $F_i=P_i^{\perp}U_i^*$ we have from (\ref{eqn:c07}), (\ref{eqn:c02})-part-1, (\ref{eqn:c03}), (\ref{eqn:c06})
\begin{enumerate} 
		\item $D_TT_i=P_i^{\perp}U_i^*D_T+P_iU_i^*D_TT$,
	\item $P_i^{\perp}U_i^*P_j^{\perp}U_j^*=P_j^{\perp}U_j^*P_i^{\perp}U_i^*$,
\item $U_iP_iU_jP_j=U_jP_jU_iP_i$,
\item $D_TU_iP_iU_i^*D_T=D_{T_i}^2$,
   \end{enumerate}
respectively, where $U_1, \dots , U_n \in \mathcal B(\mathcal D_T)$ are commuting unitaries and $P_1, \dots , P_n \in \mathcal B(\mathcal D_T)$ are orthogonal projections. Since $\prod_{i=1}^n\widehat{V}_i=V$, the $(2,2)$ entry of the block-matrix gives $\prod_{i=1}^nS_i=S$. Note that $S_i=M_{\phi_i}$ for each $i$, where $\phi_i=F_i+zF_i'^* =P_i^{\perp}U_i^*+zP_iU_i^*$. So, we have $\prod_{i=1}^nM_{P_i^{\perp}U_i^*+zP_iU_i^*}=M_z$. Thus, by Lemma \ref{BDF lemma}, $\prod_{i=1}^n U_i=I$ and $(5)$ holds. \\

\noindent \textbf{\textit{Uniqueness}.} From $(a)$ and $(a')$ we have $
D_{T_i'}^2T_i=D_TF_i'D_T$ and $ D_{T_i}^2T_i'=D_TF_iD_T
$ respectively. If there is $G_i' \in \mathcal B ({\mathcal D_T})$ such that $D_{T_i'}^2T_i=D_TG_i'D_T$, then $D_T(F_i'-G_i')D_T=0$ and thus for any $h,g \in \HS$ we have
\[
\langle (F_i'-G_i')D_Th, D_Tg \rangle= \langle D_T(F_i'-G_i')D_Th,g \rangle =0. 
\]
This shows that $F_i'=G_i'$ and hence $F_i'$ is unique. Similarly one can show that $F_i$ is unique. Now $F_i=P_i^{\perp}U_i$ and $F_i'=U_iP_i$ and thus $U_i=F_i^*+F_i'$ and $P_i=F_i'^*F_i'$. Evidently the uniqueness of $F_i, F_i'$ gives the uniqueness of $U_i, P_i$ for $1\leq i \leq n$.\\

\noindent \textbf{(b).} The minimality of the dilation follows from the fact that $V=\prod_{i=1}^n V_i$ is the minimal isometric dilation of the product $T=\prod_{i=1}^n T_i$. Following the proof of the $(\Rightarrow)$ part of $\textbf{(a)}$ we see that any commuting isometric dilation $(W_1, \dots , W_n)$ of $(T_1, \dots , T_n)$ on a minimal isometric dilation space $\mathcal K_1$ of $T$ is unitarily equivalent to the isometric dilation $(V_1, \dots , V_n)$ on the Sch$\ddot{a}$ffer's minimal space $\mathcal K_0$. The rest of the argument follows and the proof is complete.

\end{proof}

\begin{rem} \label{comm:lift1}
Note that Theorem \ref{main} actually provides a commutant lifting in several variables. In Theorem \ref{main}, the isometric dilation $(V_1, \dots , V_n)$ of a commuting contractive tuple $(T_1, \dots , T_n)$ is constructed in such a way that the product $V=\prod_{i=1}^n V_i$ becomes the minimal isometric dilation of the contraction $T=\prod_{i=1}^n T_i$. Also, it is evident from the block-matrix form of $V_i$ as in (\ref{eqn:isom-01}) that $\HS$ is coinvariant under each $V_i$ and $V_i^*|_{\HS}=T_i^*$. Thus, each $T_i$ is a commutant of $T$ and is being lifted to $V_i$ which is a commutant of the minimal isometric dilation $V$ of $T$.

\end{rem}

\begin{note}
	
Ando's theorem tells us that every pair of commuting contractions $(T_1,T_2)$ dilates to a pair of commuting isometries $(V_1,V_2)$, but $(T_1,T_2)$ may not have such an isometric dilation $(V_1,V_2)$ such that $V=V_1V_2$ is the minimal isometric dilation of $T=T_1T_2$. The following example shows that there are commuting contractions $T_1,T_2$ that violate the conditions of Theorem \ref{main}.
    
\end{note}

\begin{eg} \label{exmp:06}
	Let us consider the following contractions on $\mathbb C^3$ :
	\[
	T_1=\begin{pmatrix}
		0&0&0\\
		1/3&0&0\\
		0&1/3\sqrt{3}&0
	\end{pmatrix} \quad \& \quad
	T_2=\begin{pmatrix}
		0&0&0\\
		0&0&0\\
		-1/\sqrt{3}&0&0
	\end{pmatrix}.
	\]
	Evidently $T=T_1T_2=0$ and thus $D_T=I$. We show that $(T_1, T_2)$ does not dilate to a commuting pair of isometries acting on the minimal dilation space of $T$. Suppose it happens. Then there exist projections $P_1,P_2$ and commuting unitaries $U_1,U_2$ in $\mathcal{B}(\mathcal{D}_T)$ with $U_1U_2=I$ satisfying conditions $(1)-(5)$ of Theorem \ref{main}. Following the arguments in the $(\Rightarrow)$ part of the proof of Theorem \ref{main}, we see that $T_1,T_2$ satisfy $(a)$ for $i=1,2$ (see the first display after (\ref{eqn:0e1})), i.e.
	\[
	D_TU_1P_1D_T=T_2 ,\;\; \& \;\; D_TU_2P_2D_T=T_1.
	\]
	 Since $D_T=I$, we have that $U_1P_1=T_2$ and $U_2P_2=T_1$. Now we have
	\[
	D_TU_1P_1U_1^*D_T=U_1P_1U_1^*=T_2T_2^*
	=\begin{pmatrix}
		0&0&0\\0&0&0\\-1/\sqrt{3}&0&0
	\end{pmatrix}\begin{pmatrix}
		0&0&-1/\sqrt{3}\\
		0&0&0\\
		0&0&0
	\end{pmatrix}=\begin{pmatrix}
		0&0&0\\
		0&0&0\\
		0&0&1/3
	\end{pmatrix}.
	\]
	Also,
	\[
	D_{T_1}^2=I-T_1^*T_1=\begin{pmatrix}
		1&0&0\\
		0&1&0\\
		0&0&1
	\end{pmatrix}-\begin{pmatrix}
		0&1/3&0\\
		0&0&1/3\sqrt{3}\\
		0&0&0
	\end{pmatrix}\begin{pmatrix}
		0&0&0\\
		1/3&0&0\\
		0&1/3\sqrt{3}&0
	\end{pmatrix}.
	\]
	Thus
	\[
	D_{T_1}^2=\begin{pmatrix}8/9&0&0\\0&26/27&0\\0&0&1 \end{pmatrix}.
	\]
	Hence $D_TU_1P_1U_1^*D_T\neq D_{T_1}^2$ which contradicts condition-$(4)$ of Theorem \ref{main}. Hence $(T_1,T_2)$ does not dilate to a pair of commuting isometries $(V_1,V_2)$ acting on the minimal isometric dilation space of $T=T_1T_2$.
\end{eg}

It is evident from the first part of the proof of Theorem \ref{main} that such an isometric dilation $(V_1, \dots , V_n)$ can be constructed for $(T_1,\dots, T_n)$ with the help of conditions $(1)-(4)$ of Theorem \ref{main} and without even assuming that $\prod_{i=1}^n U_i=I$. Condition-$(5)$ was to make the product $\prod_{i=1}^n V_i$ the minimal isometric dilation of $\prod_{i=1}^n T_i$. Thus, a different version of Theorem \ref{main} can be presented based on a weaker hypothesis in the following way. Needless to mention that a proof to this follows naturally from the proof of Theorem \ref{main}.
	
\begin{thm}\label{coromain}
	Let $T_1,\ldots, T_n \in \mathcal{B}(\mathcal{H})$ be commuting contractions and let $T=\prod_{i=1}^nT_i$. Then $(T_1,\ldots ,T_n)$ possesses an isometric dilation on the minimal isometric dilation space of $T$, if there are projections $P_1,\ldots ,P_n$ and commuting unitaries $U_1,\ldots ,U_n$ in $\mathcal{B}(\mathcal{D}_T)$ such that the following hold for $i=1, \dots, n$:
	\begin{enumerate} 
		\item $D_TT_i=P_i^{\perp}U_i^*D_T+P_iU_i^*D_TT$ ,
		\item  $P_i^{\perp}U_i^*P_j^{\perp}U_j^*=P_j^{\perp}U_j^*P_i^{\perp}U_i^*$ ,
		\item $U_iP_iU_jP_j=U_jP_jU_iP_i$ ,
		\item $D_TU_iP_iU_i^*D_T=D_{T_i}^2$.
	\end{enumerate} 
	Conversely, if $(T_1,\ldots ,T_n)$ possesses an isometric dilation $(\widehat{V}_1,\ldots,\widehat{V}_{n})$ with $V=\prod_{i=1}^nV_i$ being the minimal isometric dilation of $T$, then there are unique projections $P_1,\ldots ,P_n$ and unique commuting unitaries $U_1,\ldots ,U_n$ in $\mathcal{B}(\mathcal{D}_T)$ satisfying the conditions $(1)-(4)$ above.
	
\end{thm}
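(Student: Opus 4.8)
The plan is to recycle the proof of Theorem \ref{main}, keeping only those parts that depend on conditions $(1)$--$(4)$. In that proof, \emph{Step 1} (the dilating tuple commutes) and \emph{Step 2} (each $V_i$ is an isometry coextending $T_i$) never invoke condition $(5)$ nor the normalization $\prod_{i=1}^n U_i = I$; these entered only in \emph{Steps 3--4}, where they served to identify $\prod_{i=1}^n V_i$ with the \emph{minimal} isometric dilation of $T$, a property not demanded by Theorem \ref{coromain}.

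For the forward implication, given orthogonal projections $P_1,\dots,P_n$ and commuting unitaries $U_1,\dots,U_n$ in $\mathcal{B}(\mathcal{D}_T)$ satisfying $(1)$--$(4)$, I would take $\mathcal{K}_0 = \mathcal{H} \oplus l^2(\mathcal{D}_T)$, the Sch\"affer minimal isometric dilation space of $T$, and define $V_1,\dots,V_n$ on $\mathcal{K}_0$ by the block-operator formula \eqref{eqn:isom-01}. Then I would repeat \emph{Step 1} of the proof of Theorem \ref{main} word for word: the computation of $V_iV_j$ and $V_jV_i$, the derivation of identities \eqref{identity 1} and \eqref{identity 2}, and the comparison of $(2,1)$-entries, all of which use only the commutativity of $T_i,T_j$ and of $U_i,U_j$ together with conditions $(1)$, $(2)$, $(3)$; this gives $V_iV_j = V_jV_i$ for all $i,j$. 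Next I would repeat \emph{Step 2}: computing $V_j^*V_j$ and using condition $(4)$ (equivalently $T_j^*T_j + D_TU_jP_jU_j^*D_T = I$) shows each $V_j$ is an isometry, while the lower-triangular block form gives $V_i^*|_{\mathcal{H}} = T_i^*$, so $\mathcal{H}$ is coinvariant for every $V_i$ and $(V_1,\dots,V_n)$ is an isometric dilation of $(T_1,\dots,T_n)$ acting on $\mathcal{K}_0$, which is the minimal isometric dilation space of $T$. Nothing here uses $(5)$ or $\prod_{i=1}^n U_i = I$.

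For the converse, suppose $(T_1,\dots,T_n)$ admits an isometric dilation $(\widehat{V}_1,\dots,\widehat{V}_n)$ with $V = \prod_{i=1}^n \widehat{V}_i$ the minimal isometric dilation of $T$. This is precisely the hypothesis of the $(\Rightarrow)$ part of Theorem \ref{main}, so I would run that argument verbatim: transfer the tuple to $\mathcal{K}_0$ by the canonical unitary $\tau$ intertwining the two minimal dilations of $T$; using commutativity with $V$, deduce that each $\widehat{V}_i$ has the block form $\widehat{V}_i = \begin{bmatrix} T_i & 0 \\ C_i & S_i \end{bmatrix}$ with $S_i = M_{\phi_i}$ and $\phi_i(z) = F_i + F_i'^*z$ for some $F_i,F_i' \in \mathcal{B}(\mathcal{D}_T)$; using that $\widehat{V}_i$ and $\widehat{V}_i' = \prod_{j \neq i}\widehat{V}_j$ are commuting isometries, extract the operator identities \eqref{eqn:c02}--\eqref{eqn:c06} together with \eqref{eqn:c07} for $F_i,F_i'$; put $U_i = F_i^* + F_i'$ and $P_i = \tfrac{1}{2}(I - U_i'^*U_i)$ so that $U_i$ is unitary, $P_i$ is a projection, and $F_i = P_i^{\perp}U_i^*$, $F_i' = U_iP_i$; and rewrite those identities as conditions $(1)$--$(4)$. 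Uniqueness of $P_i$ and $U_i$ then follows from the uniqueness of $F_i,F_i'$, which is forced by the relations $D_{T_i'}^2T_i = D_TF_i'D_T$ and $D_{T_i}^2T_i' = D_TF_iD_T$ exactly as in the \emph{Uniqueness} paragraph of the proof of Theorem \ref{main}.

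No genuine obstacle is expected: Theorem \ref{coromain} is essentially a corollary of Theorem \ref{main}, obtained by dropping the hypothesis $(5)$ and the matching part of the conclusion. The only care required is bookkeeping, namely verifying by inspection that Steps 1 and 2 of the earlier proof rely solely on $(1)$--$(4)$ and on the commutativity of the unitaries, so that relinquishing condition $(5)$ and the relation $\prod_{i=1}^n U_i = I$ costs only the minimality of $\prod_{i=1}^n V_i$ as a dilation of $T$, not the existence of the isometric dilation of $(T_1,\dots,T_n)$ on the minimal space.
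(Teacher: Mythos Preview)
Your proposal is correct and matches the paper's own approach exactly: the paper explicitly remarks that Theorem \ref{coromain} follows from the proof of Theorem \ref{main} because Steps 1--2 there use only conditions $(1)$--$(4)$ and the commutativity of the $U_i$, while condition $(5)$ and $\prod_i U_i = I$ are needed only in Step 3 to force $\prod_i V_i$ to be the minimal dilation of $T$. Your handling of the converse and the uniqueness likewise mirrors the paper, which simply reuses the $(\Rightarrow)$ part and the \emph{Uniqueness} paragraph of Theorem \ref{main}.
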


\vspace{0.1cm}

\section{Minimal isometric dilation and functional model when the product is a $C._0$ contraction}

\vspace{0.2cm}

\noindent In this Section, we consider a tuple of commuting contractions $(T_1, \dots , T_n)$ with the product $T=\prod_{i=1}^n T_i$ being a $C._0$ contraction, i.e. ${T^*}^n \rightarrow 0$ strongly as $n \rightarrow \infty$. Such a tuple dilates (on the minimal isometric dilation space of $T$) with a weaker hypothesis than that of Theorem \ref{main} as the following theorem shows. This is another main result of this article.

\begin{thm}\label{puredil}
	Let $T_1,\ldots ,T_n$ be commuting contractions on a Hilbert space $\mathcal{H}$ such that their product $T=\prod_{i=1}^nT_i$ is a $C._0$ contraction. Then $(T_1,\ldots ,T_n)$ possesses an isometric dilation $(V_1,\ldots ,V_n)$ with $V=\prod_{i=1}^nV_i$ being a minimal isometric dilation of $T$ if and only if there are unique orthogonal projections $P_1,\ldots ,P_n$ and unique commuting unitaries $U_1,\ldots ,U_n$ in $\mathcal{B}(\mathcal{D}_{T^*})$ with $\prod_{i=1}^n U_i=I$ such that the following conditions hold for $i=1, \dots , n$ :
	\begin{enumerate}
		\item $D_{T^*}T_i^*=P_i^{\perp}U_i^*D_{T^*}+P_iU_i^*D_{T^*}T^*$,
		\item $P_i^{\perp}U_i^*P_j^{\perp}U_j^*=P_j^{\perp}U_j^*P_i^{\perp}U_i^*$ ,
		\item $U_iP_iU_jP_j=U_jP_jU_iP_i$ ,
		\item  $P_1+U_1^*P_2U_1+U_1^*U_2^*P_3U_2U_1+\ldots +U_1^*U_2^*\ldots U_{n-1}^*P_nU_{n-1}\ldots U_2U_1 =I_{\mathcal{D}_{T^*}}$. 
	\end{enumerate} 
\end{thm}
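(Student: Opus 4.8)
The plan is to prove both implications by realising the minimal isometric dilation of the $C_{.0}$ contraction $T$ concretely as the forward shift $M_z$ on the vectorial Hardy space $H^2(\mathcal{D}_{T^*})$ and carrying out every computation there. Recall that for a $C_{.0}$ contraction $T$ on $\mathcal{H}$ the map $\Pi:\mathcal{H}\to H^2(\mathcal{D}_{T^*})$, $\Pi h=\sum_{k\geq 0}(D_{T^*}T^{*k}h)z^k$, is an isometry (since $\sum_k\|D_{T^*}T^{*k}h\|^2=\|h\|^2-\lim_k\|T^{*k}h\|^2=\|h\|^2$), its range $\mathcal{H}_T:=\operatorname{ran}\Pi$ is coinvariant for $M_z$, one has the intertwining $\Pi T^*=M_z^*\Pi$, $M_z$ is the minimal isometric dilation of $T$, and $\mathcal{D}_{M_z^*}\cong\mathcal{D}_{T^*}$. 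By uniqueness of the minimal isometric dilation (Theorem \ref{main}(b)) we may assume from the outset that any dilation under consideration lives on $H^2(\mathcal{D}_{T^*})$ with $\mathcal{H}$ identified with $\mathcal{H}_T$ via $\Pi$ and with $V=M_z$.

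For sufficiency, given projections $P_i$ and commuting unitaries $U_i$ on $\mathcal{D}_{T^*}$ with $\prod_iU_i=I$ satisfying $(1)$--$(4)$, I would set $\widehat{V}_i:=M_{U_iP_i^\perp+zU_iP_i}$ on $H^2(\mathcal{D}_{T^*})$. Conditions $(2)$ and $(3)$ are precisely the statements that the symbols $U_iP_i^\perp+zU_iP_i$ and $U_jP_j^\perp+zU_jP_j$ commute (matching coefficients of $z^0$ and $z^2$), so together with commutativity of the $U_i$, the relation $\prod_iU_i=I$, and $(4)$ they supply the hypotheses of Theorem \ref{BDF lemma O} --- the identity $P_j+U_j^*P_iU_j=P_i+U_i^*P_jU_i$ being extracted from $(2)$--$(3)$ exactly as in Step 1 of the proof of Theorem \ref{main}. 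Hence $(\widehat{V}_1,\dots,\widehat{V}_n)$ is a commuting tuple of isometries with $\prod_i\widehat{V}_i=M_z$. Since $(\widehat{V}_i^*f)_k=P_i^\perp U_i^*f_k+P_iU_i^*f_{k+1}$, a short computation using $(\Pi h)_k=D_{T^*}T^{*k}h$, the commutation $T^*T_i^*=T_i^*T^*$, and condition $(1)$ gives $\widehat{V}_i^*\Pi h=\Pi T_i^*h$ for every $h\in\mathcal{H}$; thus $\mathcal{H}_T$ is coinvariant for each $\widehat{V}_i$ and $\Pi^*\widehat{V}_i\Pi=T_i$. Because $\mathcal{H}_T$ is coinvariant for the commuting tuple $(\widehat{V}_1,\dots,\widehat{V}_n)$, compression of products factors and $\Pi^*(\widehat{V}_1^{k_1}\cdots\widehat{V}_n^{k_n})\Pi=T_1^{k_1}\cdots T_n^{k_n}$, so $(\widehat{V}_1,\dots,\widehat{V}_n)$ is an isometric dilation of $(T_1,\dots,T_n)$ whose product $M_z$ is the minimal isometric dilation of $T$.

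For necessity, let $(V_1,\dots,V_n)$ on $H^2(\mathcal{D}_{T^*})$ be such a dilation, $V=\prod_iV_i=M_z$, $\mathcal{H}=\mathcal{H}_T$. I would first record the general fact --- proved just as the vanishing $A_i=0$ in the necessity half of Theorem \ref{main} --- that $\mathcal{H}$ is coinvariant for each $V_i$: using that $V_i$ commutes with $V$, that $V^*|_{\mathcal{H}}=T^*$, and that $\mathcal{K}=\overline{\operatorname{span}}\{V^k\mathcal{H}:k\geq 0\}$, one verifies $\langle V_i^*h,(I-P_{\mathcal{H}})V^kh'\rangle=0$ for all $h,h'\in\mathcal{H}$. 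Since each $V_i$ commutes with $M_z$, $V_i=M_{\Phi_i}$ with $\Phi_i$ inner and $\prod_i\Phi_i(z)=zI$; by the Berger--Coburn--Lebow theorem (Theorem \ref{BCL}), after conjugating the projection by the unitary we may write $\Phi_i(z)=U_iP_i^\perp+zU_iP_i$ with $U_i\in\mathcal{B}(\mathcal{D}_{T^*})$ unitary and $P_i$ a projection. Commutativity of the $V_i$ with product $M_z$ then yields, through Theorem \ref{BDF lemma O}, that the $U_i$ commute, $\prod_iU_i=I$, and --- reading off the coefficients of $\Phi_i\Phi_j=\Phi_j\Phi_i$, equivalently of $\Phi_i^*\Phi_j^*=\Phi_j^*\Phi_i^*$ --- conditions $(2)$ and $(3)$, while $(4)$ is the telescoping identity provided by Theorem \ref{BDF lemma O}. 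Finally, coinvariance of $\mathcal{H}_T$ for $V_i$ together with $P_{\mathcal{H}_T}V_i|_{\mathcal{H}_T}=T_i$ forces $V_i^*\Pi=\Pi T_i^*$, and evaluating the zeroth Fourier coefficient of this identity produces exactly condition $(1)$, $D_{T^*}T_i^*=P_i^\perp U_i^*D_{T^*}+P_iU_i^*D_{T^*}T^*$. Uniqueness goes as in Theorem \ref{main}: the dilation determines the $V_i$, each $V_i$ determines its symbol $\Phi_i$, and $U_i$ and $P_i$ are recovered from $\Phi_i$ by $U_i=\Phi_i(0)+B_i$, $P_i=U_i^*B_i$, where $B_i$ is the coefficient of $z$ in $\Phi_i$.

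The step I expect to demand the most care --- and which also explains why the hypothesis here is genuinely weaker than in Theorem \ref{main}, in that there is no analogue of the condition $D_TU_iP_iU_i^*D_T=D_{T_i}^2$ --- is conceptual: here the dilating operators are multiplication operators $M_{\Phi_i}$ on a Hardy space, and a symbol of the form $U_iP_i^\perp+zU_iP_i$ is automatically unitary-valued on $\mathbb{T}$, so $M_{\Phi_i}$ is an isometry for free; condition $(1)$ then only has to pin the compression of $M_{\Phi_i}$ to $\mathcal{H}_T$ down to $T_i$, rather than to make a block operator on a Sch\"affer-type space isometric. The concrete hurdles are (i) choosing on which side $U_i$ sits in the symbol so that $(1)$--$(4)$ come out in the stated (unconjugated) form, and (ii) passing between the Bercovici--Douglas--Foias relation $P_j+U_j^*P_iU_j=P_i+U_i^*P_jU_i$ and conditions $(2)$--$(3)$ --- but this latter manipulation is the very one already performed in Step 1 of the proof of Theorem \ref{main}, so it can simply be quoted.
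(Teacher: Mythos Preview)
Your proposal is correct and takes essentially the same approach as the paper: both directions realize the minimal dilation of $T$ as $M_z$ on $H^2(\mathcal{D}_{T^*})$ via the isometry $\Pi$ (the paper's $W$), take $V_i = M_{U_iP_i^\perp + zU_iP_i}$, invoke the Bercovici--Douglas--Foias characterization (Theorem \ref{BDF lemma O}) for commutativity and the product identity, and read condition (1) off the intertwining $V_i^*\Pi = \Pi T_i^*$ at the zeroth Fourier coefficient. The only cosmetic difference is that in the necessity direction you cite Theorem \ref{BCL} directly to obtain the linear form $\Phi_i(z)=U_iP_i^\perp+zU_iP_i$, whereas the paper instead repeats the $F_i,F_i'$ extraction from the converse of Theorem \ref{main}; these are equivalent since $V=M_z$ forces the BCL unitary to be a constant.
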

\begin{proof}
	First we assume that there exist orthogonal projections $P_1,\ldots ,P_n$ and commuting unitaries $U_1,\ldots ,U_n$ in $\mathcal{B}(\mathcal{D}_{T^*})$ such that $\prod_{i=1}^nU_i=I$ and the above conditions $(1)-(4)$ hold. Since $T$ is a $C._0$ contraction, $H^2(\mathcal D_{T^*})$ is a minimal isometric dilation space for $T$. We first construct an isometric dilation $(V_1, \dots , V_n)$ for $(T_1, \dots , T_n)$ on the minimal isometric dilation space $H^2(\mathcal D_{T^*})$ of $T$ with the assumptions $(1)-(3)$ only. Condition-$(4)$ will imply that $\prod_{i=1}^n V_i=V$, the minimal isometric dilation of $T$. Let us consider the following multiplication operators acting on $H^2(\mathcal D_{T^*})$:
	\[
	V_i=M_{U_iP_i^{\perp}+zU_iP_i} \qquad (1\leq i \leq n).
	\]
	 Since $U_i$ is a unitary and $P_i$ is a projection, it follows that
	\[
	(P_i^{\perp}U_i^*+\bar{z}P_iU_i^*)(U_iP_i^{\perp}+zU_iP_i)=P_i^{\perp}+P_i=I
	\]
	and thus $V_i$ is an isometry for each $i$. Now we show that $(V_1, \dots , V_n)$ is a commuting tuple. Note that for any $1\leq i <j\leq n$, $V_iV_j=V_jV_i$ if and only if
	\[
	(U_iP_i^{\perp}+zU_iP_i)(U_jP_j^{\perp}+zU_jP_j)=(U_jP_j^{\perp}+zU_jP_j)(U_iP_i^{\perp}+zU_iP_i)
	\]
	which happens if and only if the given conditions $(2), (3)$ hold along with
	\begin{equation}\label{T2}
U_iP_iU_jP_j^{\perp}+U_iP_i^{\perp}U_jP_j=U_jP_j^{\perp}U_iP_i+U_jP_jU_iP_i^{\perp}
\end{equation}
which we prove now. From the given condition-$(2)$ we have $[U_iP_i^{\perp},U_jP_j^{\perp}]=0$ and this implies that
\[
U_iU_j-U_iU_jP_j-U_iP_iU_j+U_iP_iU_jP_j-U_jU_i+U_jU_iP_i+U_jP_jU_i-U_jP_jU_iP_i=0.
\]
From here we have that
\begin{equation} \label{T1}
	U_iP_iU_j+U_iU_jP_j=U_jU_iP_i+U_jP_jU_i 
\end{equation}
We now prove that
\begin{equation} \label{eqn:new-03}
(U_iP_iU_jP_j^{\perp}+U_iP_i^{\perp}U_jP_j)=(U_jP_j^{\perp}U_iP_i+U_jP_jU_iP_i^{\perp}).
\end{equation}
We have that
	\begin{align*}
&(U_iP_iU_jP_j^{\perp}+U_iP_i^{\perp}U_jP_j)-(U_jP_j^{\perp}U_iP_i+U_jP_jU_iP_i^{\perp})\\
&=U_iP_iU_jP_j^{\perp}-U_jP_j^{\perp}U_iP_i-U_jP_jU_iP_i^{\perp}+U_iP_i^{\perp}U_jP_j\\
&=  U_iP_iU_j-U_iP_iU_jP_j-U_jU_iP_i+U_jP_jU_iP_i-U_jP_jU_i+U_jP_jU_iP_i+U_iU_jP_j-U_iP_iU_jP_j\\
&=U_iP_iU_j-U_jU_iP_i-U_jP_jU_i+U_iU_jP_j \\
&=0.\; \qquad [\text{ by } \eqref{T1}]
\end{align*}
Therefore, (\ref{T2}) is proved and consequently $(V_1, \dots , V_n)$ is a tuple of commuting isometries. We now embed $\HS$ inside $H^2(\mathcal D_{T^*})$. Let us define $W:\mathcal{H}\to H^2(\mathcal{D}_{T^*})$ by
\begin{equation} \label{eqn:0f1}
 W(h)=\sum_{0}^{\infty}z^n D_{T^*}T^{*n}h .
 \end{equation}
 It can be found in the literature (e.g. \cite{Nagy}) that the map $W$ is an isometry. However, we include a proof here for the sake of completeness and for the convenience of a reader.
 \begin{align*}
	\|Wh\|^2 =\|\sum_{n=0}^{\infty}z^nD_{T^*}T^{*n}h\|^2
	&= \langle \sum_{n=0}^{\infty}z^nD_{T^*}T^{*n}h, \sum_{m=0}^{\infty}z^mD_{T^*}T^{*m}h \rangle\\
	&= \sum_{m,n=0}^{\infty}\langle z^n,z^m\rangle \langle  D_{T^*}T^{*n}h, D_{T^*}T^{*m}h \rangle\\
	&= \sum _{n=0}^{\infty}  \langle T^nD_{T^*}^2T^{*n}h,h \rangle\\
	&= \sum_{n=0}^{\infty} \langle T^n(I-TT^*)T^{*n}h,h \rangle \\
	&= \sum_{n=0}^{\infty} (\langle T^nT^{*n}h,h\rangle - \langle T^{n+1}T^{*(n+1)}h,h\rangle ) \\
	&= \lim_{m\to \infty }\sum_{n=0}^{m}(\|T^{*n}h\|^2-\|T^{*n+1}h\|^2)\\ 
	&= \|h\|^2 - \lim_{m\to \infty }\|T^{*m}h\|^2\\
	&=\|h\|^2. 
\end{align*} 
The second last equality follows from the fact that ${\displaystyle \lim_{n\to \infty }\|T^{*n}h\|^2=0}$ as $T$ is a $C._0$ contraction. Thus, $W$ is an isometry.
We now determine the adjoint of $W$. For any $n\geq 0$, $\xi \in \mathcal{D}_{T^*}$, we have
\[
	\langle W^*(z^n\xi),h \rangle = \langle z^n\xi, \sum_{n=0}^{\infty}z^nD_{T^*}T^{*n}h  \rangle = \langle \xi, D_{T^*}T^{*n}h \rangle = \langle T^nD_{T^{*}}\xi, h \rangle .
\]
Therefore, $W^*(z^n\xi)=T^nD_{T^*}\xi$.	Now for any $1\leq i \leq n$, for all $k\in \mathbb{N}\cup \{ 0 \}$ and for each $\xi \in \mathcal{D}_{T^*}$ we have 
	\begin{align*}
		W^*V_i(z^k\xi) = W^*M_{U_iP_i^{\perp}+zU_iP_i}(z^k\xi) = W^*(z^kU_iP_i^{\perp}\xi+z^{k+1}U_iP_i\xi)
		&=T^kD_{T^*}U_iP_i^{\perp}\xi+T^{k+1}D_{T^*}U_iP_i\xi \\
		&=T^k(D_{T^*}U_iP_i^{\perp}\xi+TD_{T^*}U_iP_i\xi)\\
		&=T^k(T_iD_{T^*}\xi) \;\; [\text{ by condition-}(1)]\\
		&=T_i(T^kD_{T^*}\xi)\\
		&=T_iW^*(z^k\xi).
	\end{align*}
	Therefore, $W^*V_i=T_iW^*$, i.e. $V_i^*W=WT_i^*=WT_i^*W^*W$ and hence $V_i^*|_{W(\mathcal{H})}=WT_i^*W^*|_{W(\mathcal{H})}$. This proves that $(V_1,\ldots ,V_n)$ is an isometric dilation of $(T_1,\ldots ,T_n)$. Now \eqref{T1} gives us $P_i+U_i^*P_jU_i=P_j+U_j^*P_iU_j$ for $1\leq i<j\leq n$ and condition-(4) yields $P_i+U_i^*P_jU_i=P_j+U_j^*P_iU_j\leq I_{\mathcal{D}_{T^*}}$. Hence, by an application of Lemma \ref{BDF lemma O} we have that that $ \prod_{i=1}^nV_i=M_z$, which is (upto a unitary) the minimal isometric dilation of $T$.
	
	\medskip
	
Conversely, suppose $(Y_1, \dots , Y_n)$ acting on $\mathcal K$ is an isometric dilation of $(T_1, \dots , T_n)$, where $Y={ \prod_{i=1}^nY_i}$ is the minimal isometric dilation of $T$. Let $Y_i'={ \prod_{j \neq i}Y_j}$ for $1 \leq i \leq n$. Then,
\[
\mathcal{K}=\overline{span}\{ Y^nh:h\in \mathcal{H}, \, n\in \mathbb{N}\cup \{0\}  \}.
\]
We first show that $Y_i^*|_{\HS}=T_i^*$ for each $i=1, \dots , n$. Note that for any $i=1,\ldots ,n$, $k\in \mathbb{N}\cup \{0 \}$ and $h\in \mathcal{H}$, $T_iP_{\mathcal{H}}(Y^kh) = T_iT^kh = P_{\mathcal{H}}Y_i(Y^kh) $ and as a consequence we have $T_iP_{\mathcal{H}}=P_{\mathcal{H}}Y_i$. Now, for any $h\in \mathcal{H}$ and $k\in \mathcal{K}$,
	\[
		\langle Y_i^*h, k \rangle=\langle h,Y_ik \rangle = \langle h, P_{\mathcal{H}}Y_ik\rangle = \langle h,T_iP_{\mathcal{H}}k \rangle = \langle T_i^*h,P_{\mathcal{H}}k \rangle = \langle T_i^*h,k \rangle .
	\]
Hence $Y_i|_{\HS}=T_i$ for each $i$. Therefore, the block matrix form of each $Y_i$ with respect to the decomposition $\mathcal K = \mathcal{H}\oplus {\HS}^{\perp}$ is $Y_i=\begin{bmatrix}
		T_i&0\\
		C_i&S_i
	\end{bmatrix}$ for some operators $C_i,\, S_i$. Also, since $Y=\Pi_{i=1}^n Y_i$, we have $Y=\begin{bmatrix}
	T&0\\
	C&S
\end{bmatrix}$ for some operators $C,\, S $. Again $V= { \prod_{i=1}^n V_i} = M_z$ on $H^2(\mathcal D_{T^*})$ is also a minimal isometric dilation of $T$. Therefore,
 \[
 H^2(\mathcal{D}_{T^*})=\overline{span}\{V^n(Wh):h\in \mathcal{H},\,n\in \mathbb{N}\cup \{ 0\}  \}.
 \]
Now the map $\tau :H^2(\mathcal{D}_{T^*})\to \mathcal{K}$ defined by $\tau(V^nWh)=Y^nh$ is a unitary which maps $Wh$ to $h$ and $\tau^*$ maps $h$ to $Wh$ for all $h\in\mathcal{H}$. Therefore, with respect to the decomposition $\mathcal{K}=\mathcal{H}\oplus \mathcal{H}^{\perp}$ and $H^2(\mathcal{D}_{T^*})=W(\mathcal{H})\oplus (W\mathcal{H})^{\perp}$ the map $\tau$ has block matrix form $\tau=\begin{bmatrix}
		W^*&0\\
		0&\tau_{1}
	\end{bmatrix}$ for some unitary $\tau_1$. Evidently $V=\tau^* Y \tau$. Now
	\[
	\tau^*Y_i\tau = \begin{bmatrix}
		W&0\\
		0&\tau_{1}^*
	\end{bmatrix} \begin{bmatrix}
	T_i&0\\
	C_i&S_i
\end{bmatrix}\begin{bmatrix}
W^*&0\\
0&\tau_{1}
\end{bmatrix} = \begin{bmatrix}
	WT_i&0\\
	\tau_{1}^*C_i&\tau_{1}^*S_i
\end{bmatrix}\begin{bmatrix}
W^*&0\\
0&\tau_{1}
\end{bmatrix} = \begin{bmatrix}
	WT_iW^*&0\\
	\tau_{1}^*C_iW^*&\tau_{1}^*S_i\tau_{1}
\end{bmatrix}.
\] 
Therefore, $\tau^*V_i^*\tau(Wh)=WT_i^*W^*(Wh)=WT_i^*h$.  For each $i=1, \dots , n$, let us define
$
\widehat{V}_i:=\tau^*V_i\tau $. Therefore, \begin{equation}\label{Vistar}
	\widehat{V}_i^*Wh=WT_i^*h\, \quad \text{ for all } \; h \in \HS,\qquad (1\leq i \leq n).
\end{equation}
Therefore, $(\widehat{V}_1, \dots , \widehat{V}_n)=(\tau^*Y_1 \tau, \dots , \tau^* Y_n \tau)$ on $H^2(\mathcal D_{T^*})$ is an isometric dilation of $(T_1, \dots , T_n)$ such that ${ \prod_{i=1}^n \widehat{V}_i}=V$. We now follow the arguments given in the converse part of the proof of Theorem \ref{main}. Since $\widehat{V}_i$ is a commutant of $V \;(=M_z)$, $\widehat{V}_i=M_{\phi_i}$, where $\phi_i(z)=F_i'^*+F_iz\in H^{\infty}(\mathcal B(\mathcal D_{T^*}))$. Evidently $U_i=F_i^*+F_i',\, U_{i}'=F_i^*-F_i'$ are commuting unitaries and $P_i=\dfrac{1}{2}(I-U_{i}'^*U_i)$ is a projection for all $i=1, \dots ,n$. A simple computation shows that $F_i=U_iP_i$ and $F_i'=P_i^{\perp}U_i^*$. Also, $[F_i, F_j]=[F_i', F_j']=0$ for all $i,j$. Therefore,
\[
\widehat{V}_i=M_{U_iP_i^{\perp}+U_iP_iz} \qquad (1 \leq i \leq n).
\]
Obviously condition-(2) and condition-(3) follow from the commutativity of $F_i, F_j$ and $F_i',F_j'$ respectively. It remains to show that conditions $(1)$ and $(4)$ hold. From \eqref{Vistar}, we have $\widehat{V}_i^*Wh=WT_i^*h$ for every $h \in \HS$. Now for any $h\in \mathcal{H}$, we have
\begin{align*}
	\widehat{V}_i^*Wh&=\widehat{V}_i^*(\sum_{k=0}^{\infty}z^kD_{T^*}T^{*k}h )\\
	&=P_i^{\perp}U_i^*D_{T^*}h+\sum_{k=1}^{\infty} z^kP_i^{\perp}U_i^*D_{T^*}T^{*k}h +z^{k-1}P_iU_i^*D_{T^*}T^{*k}h \\
	&=\sum_{k=0}^{\infty}z^k(P_i^{\perp}U_i^*D_{T^*}T^{*k}+P_iU_i^*D_{T^*}T^{*(k+1)})h\\
	&=\sum_{k=0}^{\infty}z^k(P_i^{\perp}U_i^*D_{T^*}+P_iU_i^*D_{T^*}T^*)T^{*k}h.
\end{align*}
Also,
\[
W(T_i^*h)=\sum_{k=0}^{\infty}z^kD_{T^*}T^{*k}T_i^*h	=\sum_{k=0}^{\infty}z^kD_{T^*}T_i^*T^{*k}h.
\]
Comparing the constant terms we have
   $
   D_{T^*}T_i^* =P_i^{\perp}U_i^*D_{T^*}+P_iU_i^*D_{T^*}T^*.
   $ This proves  condition-$(1)$. Now, since we have $ \prod_{i=1}^n\widehat{V}_i=V=M_z$ and $\widehat{V}_i=M_{U_iP_i^{\perp}+zU_iP_i}$, condition-$(4)$ follows from Theorem \ref{BDF lemma O}. The uniqueness of $P_1, \dots , P_n$ and $U_1, \dots , U_n$ follows by an argument similar to that in the proof of the $(\Rightarrow)$ part of Theorem \ref{main}. The proof is now complete.	

\end{proof}

Now we present an analogue of Theorem \ref{coromain} when the product $T$ is a $C._0$ contraction and obviously a proof follows from Theorem \ref{puredil} and its proof.

\begin{thm}\label{coropuredil}
	Let $T_1,\ldots, T_n \in \mathcal{B}(\mathcal{H})$ be commuting contractions such that $T=\prod_{i=1}^nT_i$ is a $C._0$ contraction. Then $(T_1,\ldots ,T_n)$ possesses an isometric dilation on the minimal isometric dilation space of $T$, if there are projections $P_1,\ldots ,P_n$ and commuting unitaries $U_1,\ldots ,U_n$ in $\mathcal{B}(\mathcal{D}_{T^*})$ such that the following hold for $i=1, \dots, n$:
	\begin{enumerate} 
		\item $D_TT_i=P_i^{\perp}U_i^*D_T+P_iU_i^*D_TT$ ,
		\item  $P_i^{\perp}U_i^*P_j^{\perp}U_j^*=P_j^{\perp}U_j^*P_i^{\perp}U_i^*$ ,
		\item $U_iP_iU_jP_j=U_jP_jU_iP_i$ .
	\end{enumerate} 
	Conversely, if a commuting tuple of contractions $(T_1,\ldots ,T_n)$, with the product $T=\prod_{i=1}^n T_i$ being a $C._0$ contraction, possesses an isometric dilation $(\widehat{V}_1,\ldots,\widehat{V}_{n})$, where $V=\prod_{i=1}^nV_i$ is the minimal isometric dilation of $T$, then there are unique projections $P_1,\ldots ,P_n$ and unique commuting unitaries $U_1,\ldots ,U_n$ in $\mathcal{B}(\mathcal{D}_{T^*})$ satisfying the conditions $(1)-(3)$ above. 
\end{thm}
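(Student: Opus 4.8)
The plan is to read off both implications directly from Theorem \ref{puredil} and its proof: Theorem \ref{coropuredil} is precisely Theorem \ref{puredil} with the data that forces minimality of the \emph{product} dilation --- namely $\prod_{i=1}^n U_i=I$ and its condition $(4)$ --- deleted, so there is no genuinely new obstacle. The whole task is to verify that the relevant portions of the proof of Theorem \ref{puredil} never invoke $\prod_{i=1}^n U_i=I$ or condition $(4)$ there, which on inspection they do not, and to carry along the standing facts that, $T$ being $C._0$, the space $H^2(\mathcal{D}_{T^*})$ is (up to unitary) the minimal isometric dilation space of $T$ and the map $W$ of \eqref{eqn:0f1} is an isometry. (Conditions $(1)$--$(3)$ in the statement are of course to be read with $D_{T^*}$ and $T^*$ in place of $D_T$ and $T$, matching Theorem \ref{puredil}; the printed version carries an evident misprint.)

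\textbf{Sufficiency.} Given projections $P_1,\dots,P_n$ and commuting unitaries $U_1,\dots,U_n$ on $\mathcal{D}_{T^*}$ satisfying $(1)$--$(3)$, I would replay the first half of the proof of Theorem \ref{puredil}: put $V_i=M_{U_iP_i^{\perp}+zU_iP_i}$ on $H^2(\mathcal{D}_{T^*})$, observe that each $V_i$ is an isometry because $U_i$ is unitary and $P_i$ is a projection, and check that $(V_1,\dots,V_n)$ commutes --- this amounts to $(2)$, $(3)$ together with the auxiliary identity \eqref{T2}, the last being derived from $(2)$ through \eqref{T1} exactly as there. Embedding $\mathcal{H}$ by $W$ and using $W^*V_i(z^k\xi)=T_iW^*(z^k\xi)$, a computation that calls only on condition $(1)$, one obtains $W^*V_i=T_iW^*$, hence $V_i^*|_{W(\mathcal{H})}=WT_i^*W^*|_{W(\mathcal{H})}$, so $(V_1,\dots,V_n)$ is an isometric dilation of $(T_1,\dots,T_n)$ on the minimal isometric dilation space of $T$. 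The only step of that proof now omitted is the concluding appeal to Lemma \ref{BDF lemma O} producing $\prod_{i=1}^n V_i=M_z$; accordingly we no longer assert that the product dilation is minimal.

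\textbf{Necessity.} Conversely, suppose $(\widehat V_1,\dots,\widehat V_n)$ is an isometric dilation with $V=\prod_{i=1}^n V_i$ the minimal isometric dilation of $T$. Here I would repeat the converse argument of Theorem \ref{puredil}: minimality supplies a unitary $\tau$ identifying the dilation space with $H^2(\mathcal{D}_{T^*})$ and carrying the canonical copy of $\mathcal{H}$ onto $W(\mathcal{H})$; after conjugation each $\widehat V_i$ commutes with $M_z$, so $\widehat V_i=M_{\phi_i}$, and the relation $\widehat V_i^*M_z=\widehat V_i'$, where $\widehat V_i':=\prod_{j\neq i}\widehat V_j$ is again a commutant $M_{\psi_i}$ of $M_z$, forces, as in Theorem \ref{main}, the affine form $\phi_i(z)=F_i'^*+F_iz$ with $F_i,F_i'\in\mathcal{B}(\mathcal{D}_{T^*})$. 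Setting $U_i=F_i^*+F_i'$, $U_i'=F_i^*-F_i'$, $P_i=\tfrac12(I-U_i'^*U_i)$, the isometry property of $\widehat V_i$ makes $U_i,U_i'$ unitary and $P_i$ an orthogonal projection with $F_i=U_iP_i$, $F_i'=P_i^{\perp}U_i^*$, so $\widehat V_i=M_{U_iP_i^{\perp}+zU_iP_i}$. Commutativity of $\widehat V_i$ and $\widehat V_j$ gives $[F_i,F_j]=[F_i',F_j']=0$, which are $(2)$ and $(3)$; comparing the constant terms in $\widehat V_i^*Wh=WT_i^*h$ gives $D_{T^*}T_i^*=P_i^{\perp}U_i^*D_{T^*}+P_iU_i^*D_{T^*}T^*$, which is $(1)$. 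Uniqueness of $P_i,U_i$ follows from uniqueness of $F_i,F_i'$, which rests on the implication $D_{T^*}(F_i-G_i)D_{T^*}=0\Rightarrow F_i=G_i$, exactly as in Theorem \ref{main}. The only ingredient of the proof of Theorem \ref{puredil} not used here is the final extraction of condition $(4)$ via Theorem \ref{BDF lemma O}.
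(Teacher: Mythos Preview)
Your proposal is correct and matches the paper's own approach: the paper gives no explicit proof of Theorem \ref{coropuredil}, stating only that ``a proof follows from Theorem \ref{puredil} and its proof,'' and you have carefully supplied precisely those details, correctly isolating the portions of that argument that rely only on conditions $(1)$--$(3)$ (construction of the $V_i$, their commutativity, and the intertwining $W^*V_i=T_iW^*$) from the final step invoking Lemma \ref{BDF lemma O}, $\prod U_i=I$, and condition $(4)$ to force $\prod V_i=M_z$. Your observation about the evident misprint in the displayed conditions (they should carry $D_{T^*}$ and $T^*$ to match Theorem \ref{puredil}) is also well taken.
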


\subsection{A functional model when the product is a $C._0$ contraction} For a contraction $T$ acting on a Hilbert space $\mathcal H$, let
$\Lambda_T$ be the set of all complex numbers for which the
operator $I-zT^*$ is invertible. For $z\in \Lambda_T$, the
characteristic function of $T$ is defined as
\begin{eqnarray}\label{e0}
\Theta_T(z)=[-T+zD_{T^*}(I-zT^*)^{-1}D_T]|_{\mathcal D_T}.
\end{eqnarray}
Here we recall a few definitions and terminologies from the initial part of Section \ref{sec:03}. The operators $D_T$ and $D_{T^*}$ are the
defect operators $(I-T^*T)^{1/2}$ and $(I-TT^*)^{1/2}$
respectively. By virtue of the relation $TD_T=D_{T^*}T$ (section
I.3 of \cite{Nagy}), $\Theta_T(z)$ maps $\mathcal
D_T=\overline{\textup{Ran}}D_T$ into $\mathcal
D_{T^*}=\overline{\textup{Ran}}D_{T^*}$ for every $z$ in
$\Lambda_T$.

\smallskip

In \cite{Nagy}, Sz.-Nagy and Foias proved that every
$C._0$ contraction $P$ acting on $\mathcal H$ is
unitarily equivalent to the operator $\mathbb T=P_{\mathbb
H_T}M_z|_{\mathbb
H_T}$ on the Hilbert space
$\mathbb H_T=H^2(\mathcal D_{T^*}) \ominus
M_{\Theta_T}(H^2(\mathcal D_T))$, where $M_z$ is
the multiplication operator on $H^2(\mathcal D_{T^*})$ and $M_{\Theta_T}$
is the multiplication operator from $H^2(
\mathcal D_T)$ into $H^2(\mathcal D_{T^*})$
corresponding to the multiplier $\Theta_T$. This is known
as Sz.Nagy-Foias model for a $C._0$ contraction. Indeed, $M_z$ on $H^2(\mathcal D_{T^*})$ dilates $T \in \mathcal B(\HS)$ and $W: \HS \rightarrow H^2(\mathcal D_{T^*})$ as in (\ref{eqn:0f1}) is the concerned isometric embedding. In an analogous manner by an application of Theorem \ref{puredil}, we obtain a functional model for a tuple of commuting contractions with $C._0$ product. A notable fact about this model is that the multiplication operators involved in this model have analytic symbols which are linear functions in one variable.

\begin{thm}\label{thm:pure-model}
Let $T_1,\ldots ,T_n$ be commuting contractions on a Hilbert space $\mathcal{H}$ such that their product $T=\Pi_{i=1}^nT_i$ is a $C._0$ contraction. If there are projections $P_1,\ldots ,P_n$ and commuting unitaries $U_1,\ldots ,U_n$ in $\mathcal{B}(\mathcal{D}_{T^*})$ satisfying the following for $1 \leq i \leq n$,
	\begin{enumerate}
		\item $D_{T^*}T_i^*=P_i^{\perp}U_i^*D_{T^*}+P_iU_i^*D_{T^*}T^*$,
		\item $P_i^{\perp}U_i^*P_j^{\perp}U_j^*=P_j^{\perp}U_j^*P_i^{\perp}U_i^*$ ,
		
\item $U_iP_iU_jP_j=U_jP_jU_iP_i$ ,
	\end{enumerate}
	Then $(T_1, \dots , T_n)$ is unitarily equivalent to $(\widetilde{T}_1, \dots , \widetilde{T}_n)$ acting on the space $\mathbb H_T=H^2(\mathcal D_{T^*})\ominus M_{\Theta_T}(H^2(\mathcal D_T))$, where $\widetilde{T}_i=P_{\mathbb H_T}(M_{U_iP_i^{\perp}+zU_iP_i})|_{\mathbb H_T}$ for $1\leq i \leq n$.
\end{thm}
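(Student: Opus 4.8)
The plan is to feed the $C._0$ version of our Sch\"affer-type construction into the Sz.-Nagy--Foias model of the product $T$. Conditions $(1)$--$(3)$ of the present statement are precisely the hypotheses under which the proof of Theorem \ref{puredil} (see also Theorem \ref{coropuredil}) produces the commuting isometric tuple $V_i=M_{U_iP_i^{\perp}+zU_iP_i}$ on $H^2(\mathcal D_{T^*})$, together with the canonical isometry $W\colon\HS\to H^2(\mathcal D_{T^*})$, $Wh=\sum_{k\ge 0}z^kD_{T^*}T^{*k}h$ of \eqref{eqn:0f1}, satisfying $W^*V_i=T_iW^*$ for every $i$; equivalently $V_i^*W=WT_i^*$. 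So first I would quote this much directly from the proof of Theorem \ref{puredil}.

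Next I would recall, from the Sz.-Nagy--Foias model for a $C._0$ contraction as stated in the paragraph preceding the theorem (and in \cite{Nagy}), that $W$ is a \emph{unitary} from $\HS$ onto the model space $\mathbb H_T=H^2(\mathcal D_{T^*})\ominus M_{\Theta_T}H^2(\mathcal D_T)$, and that it intertwines $T$ with $\mathbb T=P_{\mathbb H_T}M_z|_{\mathbb H_T}$. The key observation is then that $\mathbb H_T=\operatorname{Ran}W$ is invariant under each $V_i^*$, since $V_i^*(Wh)=W(T_i^*h)\in\operatorname{Ran}W=\mathbb H_T$ for all $h\in\HS$. Hence the compression $\widetilde T_i=P_{\mathbb H_T}V_i|_{\mathbb H_T}$ satisfies $\widetilde T_i^{\,*}=P_{\mathbb H_T}V_i^*|_{\mathbb H_T}=V_i^*|_{\mathbb H_T}$, so that $(\widetilde T_1^{\,*},\dots,\widetilde T_n^{\,*})$ is the restriction of the commuting tuple $(V_1^*,\dots,V_n^*)$ to the common invariant subspace $\mathbb H_T$; in particular the $\widetilde T_i$ commute. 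Finally, for $h\in\HS$ one has $\widetilde T_i^{\,*}(Wh)=V_i^*(Wh)=W(T_i^*h)$, i.e. $WT_i^*=\widetilde T_i^{\,*}W$; taking adjoints and using that $W\colon\HS\to\mathbb H_T$ is unitary gives $\widetilde T_i=WT_iW^*$ for each $i$. Since the single unitary $W$ implements all of these equalities at once, $(T_1,\dots,T_n)$ is jointly unitarily equivalent to $(\widetilde T_1,\dots,\widetilde T_n)$, as claimed.

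The only step that is more than bookkeeping is the identification $\operatorname{Ran}W=\mathbb H_T$ together with the intertwining $WT=\mathbb T W$; but this is exactly the Sz.-Nagy--Foias model of a $C._0$ contraction recalled above, so it may simply be invoked. Everything else --- the formula $W^*V_i=T_iW^*$ and the passage to the compression via the $V_i^*$-invariance of $\mathbb H_T$ --- is already contained in, or is an immediate consequence of, the proof of Theorem \ref{puredil}.
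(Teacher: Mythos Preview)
Your proposal is correct and follows essentially the same route as the paper: invoke the commuting isometric dilation $V_i=M_{U_iP_i^{\perp}+zU_iP_i}$ and the intertwining $V_i^*W=WT_i^*$ from the proof of Theorem \ref{puredil}, identify $W(\HS)$ with $\mathbb H_T$ via the Sz.-Nagy--Foias model, and read off the unitary equivalence. The paper's only additional specificity is that it pins down $\operatorname{Ran}W=\mathbb H_T$ by quoting the identity $WW^*+M_{\Theta_T}M_{\Theta_T}^*=I_{H^2(\mathcal D_{T^*})}$ (from \cite{Nagy} or \cite{tirtha-sourav2}), whereas you appeal to the model theorem more generically; otherwise the arguments coincide.
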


\begin{proof}

For a $C._0$ contraction $T$, we have from literature (e.g. \cite{Nagy} or Lemma 3.3 in \cite{tirtha-sourav2}) that
\[
WW^*+M_{\Theta_T}M_{\Theta_T}^*=I_{H^2(\mathcal
 D_{P^*})}.
\]
It follows from here that $W(\mathcal H)=\mathbb H_T$, where $W: \HS \rightarrow H^2(\mathcal D_{T^*})$ is as in (\ref{eqn:0f1}). Since $V_i^*|_{W(\HS)}=T_i^*$, where $V_i=(M_{U_iP_i^{\perp}+zU_iP_i})$, we have that $T_i \cong P_{\mathbb H_T}(M_{U_iP_i^{\perp}+zU_iP_i})|_{\mathbb H_T}$ for $i=1, \dots , n$.
 
\end{proof}

\subsection{A factorization of a $C._0$ contraction.} The model for commuting $n$-isometries, Theorem \ref{BDF lemma O}, can be restated in the following way.
\begin{thm}
	Let $V_1,\dots ,V_n$ be commuting isometries acting on a Hilbert space $\mathcal{H}$ and let $V=\prod_{i=1}^nV_i$. Then $V=\prod_{i=1}^nV_i$ is a pure isometry if and only if there are unique orthogonal projections $P_1,\ldots ,P_n$ and unique commuting unitaries $U_1,\ldots ,U_n$ in $\mathcal{B}(\mathcal{D}_{V^*})$ with $\prod_{i=1}^n U_i=I$ such that the following conditions hold for $i=1, \dots , n$ :
	\begin{enumerate}
		\item $P_i^{\perp}U_i^*P_j^{\perp}U_j^*=P_j^{\perp}U_j^*P_i^{\perp}U_i^*$ ,
		\item $U_iP_iU_jP_j=U_jP_jU_iP_i$ ,
		\item  $P_1+U_1^*P_2U_1+U_1^*U_2^*P_3U_2U_1+\ldots +U_1^*U_2^*\ldots U_{n-1}^*P_nU_{n-1}\ldots U_2U_1 =I_{\mathcal{D}_T}$. 
	\end{enumerate}
	Moreover, $(V_1, \dots , V_n)$ is unitarily equivalent to $(M_{U_1P_1^{\perp}+zU_1P_1}, \dots , M_{U_nP_n^{\perp}+zU_nP_n})$ on $H^2(\mathcal D_{V^*})$. 
\end{thm}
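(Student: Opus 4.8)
\section*{Proof proposal}

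The plan is to read this off from the Berger--Coburn--Lebow model (Theorem \ref{BCL}) and the characterisation of Theorem \ref{BDF lemma O}, by transporting the tuple $(V_1,\dots,V_n)$ to the Hardy space $H^2(\mathcal D_{V^*})$ and re-running the multiplier computation from the $(\Rightarrow)$ part of the proof of Theorem \ref{main}. Since a pure isometry is a $C._0$ contraction, this statement is also the ``$T_i$ already isometries'' specialisation of Theorem \ref{puredil}, which one could invoke instead; the direct route sketched below is short.

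For the forward implication, suppose $V=\prod_{i=1}^n V_i$ is a pure isometry. Then $V$ is its own minimal isometric dilation, so the isometric embedding $W$ of \eqref{eqn:0f1} (with $V$ in place of $T$) is onto and $WVW^*=M_z$ on $H^2(\mathcal D_{V^*})$. Put $\widehat V_i=WV_iW^*$ and $\widehat V_i'=WV_i'W^*$, where $V_i'=\prod_{j\neq i}V_j$ is again an isometry. These are commuting isometries commuting with $M_z$, hence multiplication operators $\widehat V_i=M_{\phi_i}$, $\widehat V_i'=M_{\phi_i'}$ with $\phi_i,\phi_i'\in H^\infty(\mathcal B(\mathcal D_{V^*}))$. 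Because $V=V_iV_i'=V_i'V_i$ and $(V_i')^*V_i'=I$ we have $V_i=(V_i')^*V$, so $M_{\phi_i}=M_{\phi_i'}^*M_z$; comparing Taylor coefficients exactly as in the $(\Rightarrow)$ part of the proof of Theorem \ref{main} forces $\phi_i(z)=F_i+zF_i'^*$ and $\phi_i'(z)=F_i'+zF_i^*$ with $F_i,F_i'\in\mathcal B(\mathcal D_{V^*})$. The facts that $\widehat V_i$ is an isometry, that $\widehat V_i,\widehat V_j$ commute, and that $\widehat V_i',\widehat V_j'$ commute then give, verbatim as in \eqref{eqn:c02}--\eqref{eqn:c05}, the identities $F_iF_j=F_jF_i$, $[F_i^*,F_j']=[F_j^*,F_i']$, $F_i'F_j'=F_j'F_i'$, $F_iF_i'=F_i'F_i=0$ and $F_i^*F_i+F_i'F_i'^*=F_i'^*F_i'+F_iF_i^*=I$ (the $D_T$-weighted identity \eqref{eqn:c06} has no analogue here and plays no role). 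Setting $U_i=F_i^*+F_i'$, $U_i'=F_i^*-F_i'$, $P_i=\tfrac12(I-U_i'^*U_i)$, one checks as in that proof that the $U_i$ are commuting unitaries, the $P_i$ orthogonal projections, $F_i=P_i^\perp U_i^*$, $F_i'=U_iP_i$, so $\widehat V_i=M_{U_iP_i^\perp+zU_iP_i}$ (in the convention of the statement) and conditions (1),(2) hold; finally $\prod_i\widehat V_i=M_z$ together with Lemma \ref{BDF lemma 1} applied inductively, as in Step~3 of the proof of Theorem \ref{main}, yields $\prod_iU_i=I$ and condition (3). This also establishes the ``moreover''.

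For the converse, given such $P_i,U_i$, conditions (1),(2) are exactly $[F_i,F_j]=0$ and $[F_i',F_j']=0$ for $F_i=P_i^\perp U_i^*$, $F_i'=U_iP_i$, and together with $\prod_iU_i=I$, commutativity of the $U_i$, and condition (3) the computation of Step~3 of the proof of Theorem \ref{main} shows $\prod_i M_{U_iP_i^\perp+zU_iP_i}=M_z$ on $H^2(\mathcal D_{V^*})$ (equivalently, invoke Theorem \ref{BDF lemma O}). Since the unitary equivalence $V_i\cong M_{U_iP_i^\perp+zU_iP_i}$ is part of the assertion being characterised, $V=\prod_iV_i\cong M_z$, which is a pure isometry; hence $V$ is pure. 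Uniqueness is immediate, since $F_i$ and $F_i'^*$ are the constant and linear Taylor coefficients of the multiplier representing $V_i$, hence determined by $V_i$, and $U_i,P_i$ are explicit expressions in $F_i,F_i'$. The only real labour is in the forward direction --- checking that the coefficient comparison truncates the multiplier at degree one and that the resulting algebra converts into the projection/unitary form --- but each such step already occurs in the proof of Theorem \ref{main} with the $D_T$-weighting deleted, so I do not expect a genuine obstacle; the one point requiring care is which convention ($U_iP_i^\perp$ versus $P_i^\perp U_i^*$, left versus right multiplication) is used when writing down the model multipliers.
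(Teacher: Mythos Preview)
Your proposal is correct and essentially parallel to the paper's argument, though the paper is more economical: it simply observes that the $V_i$ are their own isometric dilation with product $V$ the minimal isometric dilation of itself, and then invokes Theorem~\ref{puredil} directly (exactly the alternative you mention in your first paragraph). You instead unpack the converse part of Theorem~\ref{main} explicitly; this is fine and leads to the same conclusion, but it is longer than necessary given that Theorem~\ref{puredil} is already available.

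The one loose end is the convention issue you flag at the end. Following the Theorem~\ref{main} route literally produces $\phi_i(z)=P_i^{\perp}U_i^*+zP_iU_i^*$, whereas the statement (and Theorem~\ref{puredil}) uses $U_iP_i^{\perp}+zU_iP_i$. These are related by the relabelling $U_i\mapsto U_i^*$, $P_i\mapsto U_iP_iU_i^*$, under which conditions (1)--(3) and $\prod_iU_i=I$ are preserved, so there is no obstruction; but if you want the multiplier in the exact form of the statement without a change of variables, it is cleaner to follow the converse of Theorem~\ref{puredil} (where $\phi_i(z)=F_i'^*+F_iz$ with $F_i=U_iP_i$, $F_i'=P_i^{\perp}U_i^*$) rather than that of Theorem~\ref{main}.
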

\begin{proof}
	First suppose $(V_1,\dots, V_n )$ on Hilbert space $\mathcal{H}$ is a commuting $n$ tuple of isometries with $\prod_{i=1}^nV_i=V$ being a pure isometry. Thus, $\mathcal H$ can be identified with $H^2(\mathcal D_{V^*})$ via a unitary $\tau:\mathcal{H}\to H^2(\mathcal{D}_{V^*})$ and $V$ can be identified with $M_z$ on $H^2(\mathcal{D}_{V^*})$. Let $\widehat{V}_i=\tau V_i \tau^*$ for $i=1,2\dots, n $. Hence $(\widehat{V}_1, \dots ,\widehat{V}_n )$ is a commuting $n$-tuple of isometries with $\prod_{i=1}^n\widehat{V}_i=M_z$ on $H^2(\mathcal{D}_{V^*})$. Therefore, $(\widehat{V}_1,\dots ,\widehat{V}_n)$ is an isometric dilation of $(V_1,\dots ,V_n)$ with $M_z$ being the minimal isometric dilation of $V$. Therefore, by Theorem \ref{puredil}, there are unique commuting unitaries $U_1,\dots, U_n$ and unique orthogonal projections $P_1,\dots P_n$ in $\mathcal{B}(\mathcal{D}_{V^*})$ such that $\prod_{i=1}^nU_i=I$ and that the conditions $(1)-(3)$ are satisfied. 
	
	Conversely, suppose there are unique orthogonal projections $P_1,\ldots ,P_n$ and unique commuting unitaries $U_1,\ldots ,U_n$ in $\mathcal{B}(\mathcal{D}_{V^*})$ such that $\prod_{i=1}^n U_i=I$ and that the conditions $(1)-(3)$ are satisfied. Let $V_i=M_{U_iP_i^{\perp}+zU_iP_i}$ for each $i=1,2\dots ,n$. Then as seen in the proof of Theorem \ref{puredil}, $(V_1,\dots, V_n) $ is a commuting $n$-tuple of isometries with $\prod_{i=1}^nV_i=M_z$ on $H^2(\mathcal{D}_{V^*})$. 
\end{proof}

Also, Theorem \ref{BDF lemma O} provides a factorization of a $C._0$ isometry (i.e. a pure isometry) in terms of $n$ number of commuting isometries. Our result, Theorem \ref{puredil}, gives a factorization of a $C._0$ contraction in the following way.

\begin{thm} \label{thm:factors}
	Let $T_1,\ldots ,T_n$ be commuting contractions on a Hilbert space $\HS$ and let their product $T=\prod_{i=1}^n T_i$ be a $C._0$ contraction. Then $(T_1^*, \dots, T_n^*)\equiv (V_1^*|_{\mathcal{H}}, \dots , V_n^*|_{\mathcal{H}})$ for a model $n$-isometry $(V_1, \dots , V_n)$ on $H^2(\mathcal D_{T^*})$ if and only if there exist unique orthogonal projections $P_1, \dots , P_n$ and unique commuting unitaries $U_1, \dots , U_n$ in $\mathcal B(\mathcal D_{T^*})$ such that $\prod_{i=1}^n U_i=I_{\mathcal D_{T^*}}$ and the following conditions are satisfied:
	\begin{enumerate}
		\item $D_{T^*}T_i^*=P_i^{\perp}U_i^*D_{T^*}+P_iU_i^*D_{T^*}T^*$,
		\item $P_i^{\perp}U_i^*P_j^{\perp}U_j^*=P_j^{\perp}U_j^*P_i^{\perp}U_i^*$ ,
		\item $U_iP_iU_jP_j=U_jP_jU_iP_i$ ,
		\item  $P_1+U_1^*P_2U_1+U_1^*U_2^*P_3U_2U_1+\ldots +U_1^*U_2^*\ldots U_{n-1}^*P_nU_{n-1}\ldots U_2U_1 =I_{\mathcal{D}_T}$. 
	\end{enumerate}
\end{thm}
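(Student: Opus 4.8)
The plan is to obtain Theorem~\ref{thm:factors} directly from Theorem~\ref{puredil}, of which it is essentially a restatement in the language of factorizations. The key preliminary observation, which I would record first, is that since $T$ is a $C._0$ contraction, $M_z$ on $H^2(\mathcal D_{T^*})$ is (up to unitary equivalence) the minimal isometric dilation of $T$, with the canonical isometric embedding $W:\HS\to H^2(\mathcal D_{T^*})$ of \eqref{eqn:0f1}. Consequently, asking for a model $n$-isometry $(V_1,\dots,V_n)$ on $H^2(\mathcal D_{T^*})$ with $(T_1^*,\dots,T_n^*)\equiv(V_1^*|_{\HS},\dots,V_n^*|_{\HS})$ is the same as asking for an isometric dilation $(V_1,\dots,V_n)$ of $(T_1,\dots,T_n)$ of the special multiplier form $V_i=M_{U_iP_i^{\perp}+zU_iP_i}$ whose product $V=\prod_{i=1}^nV_i$ is the minimal isometric dilation of $T$.

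For the forward implication, suppose $(T_1^*,\dots,T_n^*)\equiv(V_1^*|_{\HS},\dots,V_n^*|_{\HS})$ for a model $n$-isometry $(V_1,\dots,V_n)$ on $H^2(\mathcal D_{T^*})$. By definition of a model $n$-isometry, $V=\prod_{i=1}^nV_i=M_z$, which is the minimal isometric dilation of $T$; hence $(V_1,\dots,V_n)$ is a commuting isometric dilation of $(T_1,\dots,T_n)$ of exactly the type treated in the converse half of Theorem~\ref{puredil}. Invoking that theorem yields unique orthogonal projections $P_1,\dots,P_n$ and unique commuting unitaries $U_1,\dots,U_n$ in $\mathcal B(\mathcal D_{T^*})$ with $\prod_{i=1}^nU_i=I_{\mathcal D_{T^*}}$ satisfying conditions $(1)$--$(4)$ of Theorem~\ref{puredil}, which are precisely conditions $(1)$--$(4)$ of the present statement; by the uniqueness assertion these are the very data defining the given model $n$-isometry, so nothing further need be checked.

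For the converse, assume there exist unique $P_i,U_i$ as in the statement. The ``if'' direction of Theorem~\ref{puredil} then produces an isometric dilation $(V_1,\dots,V_n)$ of $(T_1,\dots,T_n)$, given explicitly by $V_i=M_{U_iP_i^{\perp}+zU_iP_i}$ on $H^2(\mathcal D_{T^*})$, with $V=\prod_{i=1}^nV_i$ the minimal isometric dilation of $T$; and the proof of Theorem~\ref{puredil} already verifies, through Theorem~\ref{BDF lemma O}, that $\prod_{i=1}^nV_i=M_z$, so $(V_1,\dots,V_n)$ is by definition a model $n$-isometry. Since the embedding $W$ of \eqref{eqn:0f1} satisfies $V_i^*|_{W(\HS)}=WT_i^*W^*|_{W(\HS)}$, identifying $\HS$ with $W(\HS)$ gives $(T_1^*,\dots,T_n^*)\equiv(V_1^*|_{\HS},\dots,V_n^*|_{\HS})$; combined with the hypothesis $T=\prod_{i=1}^nT_i$ this is the asserted factorization, and uniqueness of the data $P_i,U_i$ carries over verbatim from Theorem~\ref{puredil}.

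I do not expect a genuine obstacle: the result is a translation of Theorem~\ref{puredil}, and the only point requiring a moment's attention is confirming that the dilation delivered by Theorem~\ref{puredil} is a model $n$-isometry in the sense of Theorem~\ref{BDF lemma O} --- equivalently, that conditions $(1)$--$(4)$ of the present statement, with $\prod_{i=1}^nU_i=I$, force the order relation $P_j+U_j^*P_iU_j=P_i+U_i^*P_jU_i\le I_{\mathcal D_{T^*}}$ --- and this is already contained in the proof of Theorem~\ref{puredil} (it is the consequence drawn there from \eqref{T1} together with condition~$(4)$).
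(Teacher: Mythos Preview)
Your proposal is correct and follows essentially the same route as the paper: both directions are obtained by invoking Theorem~\ref{puredil}, with the added observation that the isometric dilation produced there has the form $V_i=M_{U_iP_i^{\perp}+zU_iP_i}$ with $\prod_i V_i=M_z$, i.e.\ is a model $n$-isometry. Your write-up is in fact somewhat more careful than the paper's, explicitly flagging why the constructed tuple qualifies as a model $n$-isometry via Theorem~\ref{BDF lemma O}.
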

\begin{proof}
	First suppose there are projections $P_1, \dots , P_n$ and commuting unitaries $U_1, \dots , U_n$ in $\mathcal B(\mathcal D_{T^*})$ such that $\prod_{i=1}^n U_i=I_{\mathcal D_{T^*}}$ and that the conditions $(1)-(4)$ are satisfied. Then Theorem \ref{puredil} provides commuting $n$ isometries $V_1, \dots, V_n$ on $H^2(\mathcal D_{T^*})$ with $V_i=M_{U_iP_i^{\perp}+zU_iP_i}$ for each $i$, such that 
	\[
	(T_1^*,\dots ,T_n^*)\equiv (V_1^*|_{ W(\mathcal H)}, \dots , V_n^*|_{ W(\mathcal H)}),
	\]
	where $W$ is the isometry as in (\ref{eqn:0f1}).
	
	Conversely, suppose $(T_1^*,\ldots ,T_n^*)$ is equivalent to $(V_1^*|_{\mathcal{H}},\ldots,V_n^*|_{\mathcal{H}})$ for some model $n$ isometry $(V_1,\ldots ,V_n)$ on $H^2(\mathcal{D}_{T^*})$. Then by the $(\Rightarrow)$ part of Theorem \ref{puredil}, there are projections $P_1, \dots , P_n$ and commuting unitaries $U_1, \dots , U_n$ in $\mathcal B(\mathcal D_{T^*})$ such that $\prod_{i=1}^n U_i=I_{\mathcal D_{T^*}}$ and that the conditions $(1)-(4)$ are satisfied.
\end{proof}

\section{Examples} \label{Examples}

\vspace{0.2cm}

\noindent In this Section, we present several examples to compare our classes of commuting contractions admitting isometric dilation with the previously determined various classes from the literature. We shall show that neither our classes are properly contained in any of these classes from the literature nor any previously determined classes are contained properly in our classes. However, there are always nontrivial intersections. Note that our theory is one-dimensional in the sense that the operator models that we obtained are all having multiplicatiplication operators with multipliers of one complex variable.

\smallskip

Suppose $(T_1,T_2)$ is a commuting pair of contractions admitting isometric dilation $(V_1,V_2)$ on the minimal dilation space of $T=T_1T_2$. If there are unitaries $U_1,U_2$ and projections $P_1,P_2$ such that $U_iP_j=U_jP_i$ for $i=1,2$ and that the condition-$(1)$ of Theorem \ref{coromain} holds, then it can be verified that the conditions $(2),(3),(4)$ hold as a consequence. Hence, $(T_1,T_2)$ has an isometric dilation on the minimal ismetric dilation space of $T$. Now one may ask a question: if $(T_1,T_2)$ admits isometric dilation on the minimal isometric dilation space of $T$, then will the corresponding unitaries commute with the projections ? The following example gives a negative answer to this. 
\begin{eg}
	Let $T_1=T_2=\begin{bmatrix}
		0&1\\
		0&0
	\end{bmatrix}$ be commuting pair of contractions on $\mathbb{C}^2$. Then clearly $T=T_1T_2=0$ and hence $D_T=I$. Hence we need to find commuting $U_1,U_2$ and projections $P_1,P_2$ such that $D_TT_i=P_i^{\perp}U_i^*D_T+P_iU_i^*D_TT$ for $i=1,2$. Substituting $D_T=I$ and $T=0 $, the above equations are equivalent to $T_1=P_1^{\perp}U_1^* $ ad $T_2=P_2^{\perp}U_2^*$. One can observe that $P_1=P_2=\begin{bmatrix}
		0&0\\
		0&1
	\end{bmatrix}$ and $U_1=U_2=\begin{bmatrix}
		0&1\\1&0
	\end{bmatrix}$ satisfy the above two equations. It is clear that $U_1, U_2$ are commuting unitaries and $P_1, P_2$ are projections. Further as $T_1,T_2$ commute with each other, it follows that $P_1^{\perp}U_1^*$ commutes with $P_2^{\perp}U_2^*$. Simple calculation shows that, $U_1P_1=T_1$ and $U_2P_2=T_2$. Therefore $U_1P_1$ commutes with $U_2P_2$. further $D_TU_iP_iU_i^*D_T=U_iP_iU_i^*=U_iP_iP_iU_i^*=T_iT_i^*=\begin{bmatrix}
		1&0\\0&0
	\end{bmatrix}=D_{T_i}^2.$ Also $P_1+U_1^*P_2U_1=I$. Hence the conditions $(1)-(5)$ of Theorem \ref{main} hold. But one can clearly observe that $U_i$ do not commute with $P_j$ as $U_iP_j=\begin{bmatrix}
		0&1\\0&0
	\end{bmatrix}$ and $P_jU_i=\begin{bmatrix}
		0&0\\1&0
	\end{bmatrix}$. 
\end{eg}

Before going to the next example let us note down the following observation from the proofs of the previously stated dilation theorems.

\begin{note}\label{impnote}
	Let $T_1,\dots ,T_n\in \mathcal{B}(\mathcal{H})$ be commuting contractions. Suppose there are projections $P_1,\dots ,P_n\in \mathcal{B}(\mathcal{D}_T)$ and commuting unitaries $U_1,\dots ,U_n\in\mathcal{B}(\mathcal{D}_T)$ such that $\prod_{i=1}^nU_i=I$ and conditions $(1)-(5)$ of Theorem \ref{main} are satisfied. Then by the Theorem \ref{main}, $(T_1,\dots ,T_n)$ possesses an isometric dilation $(V_1,\dots ,V_n)$ on the minimal isometric dilation space $\mathcal{K}$ of $T$ such that $\prod_{i=1}^nV_i=V$ is the minimal isometric dilation of $T$. Without loss of generality we can assume $V_i$ to be as in (\ref{eqn:isom-01}) and $V$ to be the sch$\ddot{a}$ffer's minimal isometric dilation. So, if $V_i'=\prod_{j\neq i}V_j$, then 
	\[
	{V}_i'= V_i^*V=\begin{bmatrix}
		T_i'&0&0&0&\ldots \\
		U_iP_i^{\perp}D_T&U_iP_i&0&0&\ldots \\
		0&U_iP_i^{\perp}&U_iP_i&0&\ldots \\
		0&0&U_iP_i^{\perp}&U_iP_i&\ldots \\
		\ldots & \ldots & \ldots & \ldots & \ldots
	\end{bmatrix}.
	\]
	Thus, $(2,1)$ entries of both sides of ${V}_i'{V}={V}{V}_i' $ gives $D_TT_i'=U_iP_iD_T+U_iP_i^{\perp}D_TT$ for $1\leq i \leq n$.	 
\end{note}

Let $\mathcal{U}^n(\mathcal{H}) $ be the class of commuting $n$-tuples of contractions on $\mathcal{H}$ satisfying conditions $(1)-(4)$ of Theorem \ref{coromain} and let $\mathcal{S}^n(\mathcal{H})$ denote the class satisfying conditions $(1)-(5)$ of Theorem \ref{main}. The following example shows that $\mathcal{S}^n(\mathcal{H})$ is properly contained in $\mathcal{U}^n(\mathcal{H})$.
 
\begin{eg}\label{eg1}
	Let us consider the following doubly commuting contractions acting on $\C^3$:
	\[
	T_1=\begin{bmatrix}
		1&0&0\\0&0&0\\0&0&0
	\end{bmatrix} , \;
	T_2=\begin{bmatrix}
		0&0&0\\0&1&0\\0&0&0
	\end{bmatrix}, \;
	T_3=\begin{bmatrix}
		0&0&0\\0&0&0\\0&0&1
	\end{bmatrix}.
	\]
	We show that $(T_1,T_2,T_3)\notin \mathcal{S}^3(\mathbb{C}^3)$ though it belongs to $ \mathcal{U}^3(\mathbb{C}^3) $. Note that $T_1'=T_2T_3=0$ and similarly $T_2'=T_3'=0$. Also, it is clear that $T=T_1T_2T_3=0$ and hence $D_T=I$ on $\mathbb{C}^3$. Thus, $\mathcal{D}_T=\mathbb{C}^3$. Now suppose there are commuting unitaries $U_1,U_2,U_3$ and projections $P_1,P_2,P_3$ in $\mathcal{B}(\mathbb{C}^3)$ satisfying the hypotheses of Theorem \ref{main}. Then, for each $i$ we have that  $D_TT_i=P_i^{\perp}U_i^*D_T+P_iU_i^*D_TT$. Since $T=0$ and $D_T=I$ it reduces to $T_i=P_i^{\perp}U_i^*$. Again, from Note \ref{impnote}, it is clear that the unitaries and projections satisfying $(1)-(5)$ must also satisfy $D_TT_i'=U_iP_iD_T+U_i^*P_i^{\perp}D_TT$, which is same as saying that $T_i'=U_iP_i$. Thus, $T_i^*+T_i'=U_iP_i^{\perp}+U_iP_i=U_i$. Since $T_i'=0$, we have that $U_i=T_i^*+T_i'=T_i^*$. This contradicts the fact that $U_i$ is a unitary. Hence  $(T_1,T_2,T_3)\notin \mathcal{S}^3(\mathbb{C}^3)$. Now if we take $U_i=I$ and $P_i=I-T_i$  for $i=1,2,3$ then one can easily verify that the conditions $(1)-(4)$ of Theorem \ref{coromain} hold.  
\end{eg}

 In \cite{B:D:H:S}, Barik, Das, Haria and Sarkar introduced a new class of commuting contractions that admit isometric dilation. For each natural number $n\geq 3$ and for every number $p,q$ with $1\leq p<q \leq n$ let $\mathcal{T}^n_{p,q}(\mathcal{H})$ be defined as follows:
\begin{equation}\label{barikdasclass}
	 \mathcal{T}^n_{p,q}(\mathcal{H})=\{T\in \mathcal{T}^n(\mathcal{H}):\hat{T}_p,\hat{T}_q\in \mathbb{S}_{n-1}(\mathcal{H}),\,\text{ and } \hat{T}_p \text{ is pure} \}, \
\end{equation}
where for any natural number $i\leq n$, $\hat{T}_i=(T_1,\ldots ,T_{i-1},T_{i+1},\ldots T_n)$, $\mathcal{T}_{n}(\mathcal{H})$ is a set of commuting $n$ tuple of contractions on space $\mathcal{H}$ and
\[
\mathbb{S}_n(\mathcal{H})=\{T\in \mathcal{T}^n(\mathcal{H}):\sum_{k\in \{0,1\}^n}(-1)^{|k|}T^kT^{*k} \geq 0\} .
\]
Note that $\mathbb{S}_n(\mathcal H)$ is the set of all those $n$-tuples of contractions on $\mathcal{H}$ that satisfy Szego positivity condition, i.e. $\sum_{k\in \{0,1\}^n}(-1)^{|k|}T^kT^{*k} \geq 0$. The class obtained by putting an additional condition $\|T_i\|<1$ for each $i$ on the elements of $\mathcal{T}^n_{p,q}(\mathcal{H})$ is denoted by $\mathcal{P}^n_{p,q}(\mathcal{H})$. This class has been studied in \cite{G:K:V:V:W} by Grinshpan, Kaliuzhnyi,Verbovetskyi, Vinnikov and Woerdeman. In \cite{B:D:H:S}, it is shown that $\mathcal{P}^n_{p,q}(\mathcal{H})\subsetneq \mathcal{T}^n_{p,q}(\mathcal{H})$ for $1\leq p<q \leq n$. The following example shows that an element of our class may not satisfy the Szego positivity condition.

\begin{eg} \label{exmp:05}
	Let $T_1=T_2=\begin{bmatrix}
		0&0\\
		1&0
	\end{bmatrix}$ and $T_3=I$. Then $T=T_1T_2T_3=0$ and hence $D_{T}=I$. Let $P_1=P_2=\begin{bmatrix}
		1&0\\
		0&0
	\end{bmatrix}$, $P_3=0$ and $U_1=U_2= \begin{bmatrix}
		0&1\\
		1&0
	\end{bmatrix} $, $U_3=I$. Then $U_1,U_2,U_3$ are commuting unitaries and $P_1,P_2,P_3$ are projections satisfying $U_1U_2U_3=I$ and the conditions $(1)-(5)$ of Theorem \ref{main}. Hence $(T_1,T_2,T_3)$ belongs to $\mathcal{S}^3(\mathbb{C}^3)$. Note that
	\[
	I-T_1T_1^*-T_2T_2^*-T_3T_3^*=I-\begin{bmatrix}
		0&0\\
		0&1
	\end{bmatrix}-\begin{bmatrix}
		0&0\\
		0&1
	\end{bmatrix}-I=\begin{bmatrix}
		0&0\\
		0&-2
	\end{bmatrix} \ngeq 0.
	\]
Also,
\[
I-T_2T_2^*-T_3T_3^*=I-\begin{bmatrix}
		0&0\\
		0&1
	\end{bmatrix}-I= \begin{bmatrix}
		0&0\\
		0&-1
	\end{bmatrix} \ngeq 0
\]
and
\[
I-T_1T_1^*-T_3T_3^*=I-\begin{bmatrix}
		0&0\\
		0&1
	\end{bmatrix}-I= \begin{bmatrix}
		0&0\\
		0&-1
	\end{bmatrix} \ngeq 0.
\]
Thus, if we consider $\widehat{T}_1=(T_2,T_3)$, $\widehat{T}_2=(T_1,T_3)$ and $\widehat{T}_3=(T_1,T_2)$ then $\widehat{T}_1$, $\widehat{T}_2$ do not belong to $\mathbb{S}_{2}(\mathcal{H})$. So, for any $p,q$ satisfying $1\leq p,q \leq 3$, we have that $(T_1,T_2,T_3)\notin \mathcal{T}^n_{p,q}(\mathcal{H})$.   
\end{eg}
Note that Example \ref{exmp:05} does not satisfy Brehmer's condition. Recall that $\underline{T}=(T_1,\dots ,T_n)$ satisfies Brehmer's conditions if 
\[
\sum_{F\subseteq G}(-1)^{|F|}\underline{T}_{F}^*\underline{T}_F\geq 0
\] 
for all $G\subseteq\{ 1,\ldots n \}$. See (\ref{lit:eqn01}) for the definition. For $\gamma=(\gamma_1,\ldots ,\gamma_n)\in \mathbb{N}^n$, Curto and Vasilescu \cite{Cur:Vas 1}, \cite{Cur:Vas 2} introduced a notion of $\gamma$ contractivity. For $\gamma=(1,\ldots ,1):=e$ the notion agrees with Brehmer's condition. The notion of $\gamma$ contractivity is defined in such a way that for each $\gamma \in \mathbb N^n$ if an operator $T=(T_1,\ldots ,T_n)$ is $\gamma$ contractive then it is $e$ contractive. Since Example \ref{exmp:05} does not satisfy Brehmer's condition, it does not have a regular unitary dilation and it is not $e$ contractive. Hence, it can not be $\gamma$ contractive and consequently it does not belong to the `Curto-Vasilescu' class.
   
\begin{eg}
	As observed by Barik, Das, Haria and Sarkar in \cite{B:D:H:S} that there is an operator tuple $(M_{z_1},\ldots ,M_{z_n})$ in $\mathcal{T}^n_{p,q}(H^2(\mathbb{D}^n))$ that does not belong to $\mathcal{P}^n_{p,q}(H^2(\mathbb{D}^n))$. This class was introduced in \cite{G:K:V:V:W} by Grinshpan et al. Since 
	$T=M_{z_1}\ldots M_{z_n}$ on $H^2(\mathbb{D}^n)$ is an isometry, $\mathcal{D}_{T}=0$. So the minimal isometric dilation space of $T$ is  $\mathcal{K}_0=H^2(\mathbb{D}^n)$. Since each $M_{z_i}=T_i$ is an isometry on $H^2(\mathbb{D}^n)$, we have that $(M_{z_1},\ldots ,M_{z_n})$ is an isometric dilation of $(T_1,\ldots ,T_n)$ on the minimal dilation space of $T$ with the product being the minimal isometric dilation of $T$. Thus $(M_{z_1},\ldots ,M_{z_n})\in \mathcal{U}^n(\mathcal{H})$ by Theorem \ref{main}.   
\end{eg}

\begin{eg}\label{eg2}
Let us consider the following commuting self-adjoint scalar matrices acting on $\C^3$:
	\[
	T_1=\begin{bmatrix}
		0&0&0\\
		0&1&0\\
		0&0&1
	\end{bmatrix}, \;
	T_2=\begin{bmatrix}
		1&0&0\\
		0&0&0\\
		0&0&1
	\end{bmatrix}, \;
	T_3=\begin{bmatrix}
		1&0&0\\
		0&1&0\\
		0&0&0
	\end{bmatrix}.
	\]
Let $P_i=I-T_i$ and $U_i=I$ for $i=1,2,3$. Note that $D_T=0$ and the projections $P_1,P_2,P_3$ satisfy $P_1+P_2+P_3=I$. Thus, the condition-$(5)$ holds. Also, it can be easily verified that the conditions $(1)-(4)$ hold. Therefore, this triple of commuting contractions belongs to $\mathcal{S}^3(\mathbb{C}^3)$.
 \end{eg}

\begin{eg}\label{eg3}
	Let us consider the commuting self-adjoint scalar matrices
	\[
	T_1=\begin{bmatrix}
		0&1/3&0\\
		1/3&0&0\\
		0&0&0
	\end{bmatrix}, \; T_2=\begin{bmatrix}
		1/2&1/3&0\\
		-1/3&1/2&0\\
		0&0&0
	\end{bmatrix}, \; T_3=\begin{bmatrix}
		0&0&0\\
		0&0&0\\
		0&0&1
	\end{bmatrix}
	\]
	acting on $\mathbb{C}^3$. Evidently $T=T_1T_2T_3=0$ and thus $D_T=I$. Now suppose there are projections $P_1,P_2,P_3$ and commuting unitaries $U_1,U_2,U_3$ satisfying conditions $(1)-(4)$ of Theorem \ref{main}. So, in particular we have
	$$D_TT_1=P_1^{\perp}U_1^*D_T+P_1U_1^*D_TT,
	$$
	which implies that $T_1=P_1^{\perp}U_1^*$. We also have $D_TU_1P_1U_1^*D_T=I-T_1^*T_1$ and this gives $P_1=U_1^*(I-T_1^*T_1)U_1$ and hence we have
	\[
	P_1^{\perp}=I-U_1^*(I-T_1^*T_1)U_1=U_1^*T_1^*T_1U_1=U_1^*\begin{bmatrix}
		1/9&0&0\\0&1/9&0\\0&0&0
	\end{bmatrix}U_1.
	\]
	Therefore,
	$$T_1=U_1^*\begin{bmatrix}
		1/9&0&0\\0&1/9&0\\0&0&0
	\end{bmatrix}.
	$$
	Again
	\[
	\begin{bmatrix}
		3\\0\\0
	\end{bmatrix}=T_1\begin{bmatrix}
		0\\9\\0
	\end{bmatrix}=U_1^*\begin{bmatrix}
		1/9&0&0\\0&1/9&0\\0&0&0
	\end{bmatrix}\begin{bmatrix}
		0\\9\\0
	\end{bmatrix}=U_1^*\begin{bmatrix}
		0\\1\\0
	\end{bmatrix}.
	\]
	This contradicts the fact that $U_1$ is a unitary. Hence $(T_1,T_2,T_3)\notin \mathcal{U}^3(\mathbb{C}^3)$. 
\end{eg} 

\begin{eg}
	Let $T_1,T_2$ be as in Example \ref{exmp:06} and let $T_3=I$. Then it i evident that $(T_1,T_2,T_3)$ does not belong to our class $\mathcal{S}^3(\mathbb{C}^3)$. This can be verified using an argument similar to that in Example \ref{eg3}. However,
	\[
	I-T_2T_2^*-T_3T_3^*+T_2T_3T_2^*T_3^*=I-\begin{bmatrix}
		0&0&0\\
		0&0&0\\
		0&0&1/3
	\end{bmatrix}-I+\begin{bmatrix}
		0&0&0\\
		0&0&0\\
		0&0&1/3
	\end{bmatrix}=0.
	\]
	Alo,
	\[
	I-T_1T_1^*-T_2T_2^*+T_1T_2T_1^*T_2^*=I-\begin{bmatrix}
		0&0&0\\
		0&0&0\\
		0&0&1/3
	\end{bmatrix}-\begin{bmatrix}
		0&0&0\\
		0&1/9&0\\
		0&0&1/27
	\end{bmatrix}+0=\begin{bmatrix}
		1&0&0\\
		0&8/9&0\\
		0&0&17/27
	\end{bmatrix}\geq 0.
	\]
	Hence $\widehat{T}_1$ and $\widehat{T}_3$ belong to $\mathbb{S}_3(\mathbb{C}^3)$. Clearly $T_1,T_2$ are pure contractions and hence $\widehat{T}_3$ is pure. Thus, $(T_1,T_2,T_3)\in \mathcal{T}^3_{1,3}(\mathbb{C}^3)$.    
\end{eg}

We now consider a few examples of contractions acting on infinite dimensional Hilbert spaces and study their dilations.

\begin{eg}
	Let $\mathcal{H}=l^2$ and let $T_1,T_2$ be the following weighted shift operators acting on $\HS$: 
	\[T_1(h_0,h_1,\dots )= (0,a_0h_0,0,a_1h_2,0,a_2h_4,\dots),\]
	\[T_2(h_0,h_1,\dots )= (0,b_0h_0,0,b_1h_2,0,b_2h_4,\dots),\]
	where $\{a_0,a_1,\dots\}$ and $\{b_0,b_1,\dots \}$ are  bounded sequences of non zero real numbers with $|a_n|<1$, $|b_n|<1$ for every $n\in \mathbb N$. Clearly, $T=T_1T_2=T_2T_1=0$. Hence $D_T=I$ and $\mathcal{D}_T=l^2=\mathcal{H}$.   
	We show that $(T_1,T_2)$ does not possess isometric dilation $(V_1,V_2)$ on the minimal isometric dilation space of $T$ with $V=V_1V_2 $ being the minimal isometric dilation of $T$.	Let us assume the contrary. Then by Theorem \ref{main}, there are projections $P_1,P_2$ commuting unitaries $U_1,U_2$ in $\mathcal{B}(\mathcal{D}_T)=\mathcal{B}(\mathcal{H})$ with $U_1U_2=I$ satisfying conditions $(1)-(5)$. Condition $(1)$ gives us $T_i=P_i^{\perp}U_i^*$ for $i=1,2$.  From condition $(4)$ we have
	\begin{align*}
		&U_1P_1U_1^*=I-T_1^*T_1\\
		\implies &T_1^*T_1=U_1(I-P_1)U_1^*\\
		\implies &T_1^*T_1=U_1T_1 \hsp [\text{as } T_1=P_1^{\perp}U_1^* ].
	\end{align*}  
Simple calculation shows that $T_1^*T_1=diag(a_0^2,0,a_1^2,0,a_2^2,\dots)$. Hence \begin{align*}
	&U_1T_1(a_0^{-1},0,\dots)=T_1^*T_1(a_0^{-1},0,\dots)\\
	\implies & U_1(0,1,0,0,\dots)=(a_0,0,0,\dots).
\end{align*}
Since $U_1$ is a unitary, $\|U_1(0,1,0,0,\dots)\|=\|(0,1,0,0,\dots)\| $ and thus it follows from here that $\| (a_0,0,0,\dots)\|=\|(0,1,0,0,\dots) \|$. This is a contradiction as $|a_0|<1$. Hence we are done.
\end{eg}

\begin{eg}
Let $\mathcal{H}=l^2$ and let $\{e_0,e_1,\dots \}$ be the standard orthonormal basis of $l^2$. For $k=1,\dots ,n $, let $T_k$ be defined on $\HS$ by $T_k(e_m)=0 $ if $ m\equiv (k-1)mod \,n$ and $T_k(e_m)=e_m$ otherwise. We show that $(T_1,T_2,\dots ,T_n)$ possesses an isometric dilation on the minimal isometric dilation space $\mathcal{K}$ of $T=\prod_{i=1}^nT_i$. evidently, $T=0$ and thus $\mathcal{D}_T= \mathcal{H}$. So, we have that $\mathcal{K}=\mathcal{H}\oplus \mathcal{H}\oplus \dots$ is the minimal isometric dilation space of $T$. Now for each $k$, define $P_k=I-T_k$ and $U_k=I $ on $\mathcal{D}_T= \mathcal{H}$. Then it follows that $U_1,\dots ,U_n$ are commuting unitaries with $ U_1U_2\dots U_n=I$ and $P_1,\dots ,P_n$ are projections. Also, the conditions $(1)-(5)$ of Theorem \ref{main} are satisfied straightway. Therefore, Theorem \ref{main} guaranties the existence of an isometric dilation of $(T_1, \dots , T_n)$ as desired.      
\end{eg}

\vspace{0.1cm}

\section{Sz. Nagy-Foias type isometric dilation and functional model}

\vspace{0.2cm}

\noindent Recall that a c.n.u. contraction $T$ on a Hilbert space $\HS$ is a contraction such that there is no non-zero subspace $\HS_1$ of $\HS$ that reduces $T$ and on ${\HS_1}$ the operator $T$ acts as a unitary. In simple words, a c.n.u. contraction is a contraction without any unitary part. Let $T$ on $\mathcal{H}$ be a c.n.u. contraction and let $V$ on $\mathcal{K}_{0}$ be the minimal isometric dilation of $T$. By Wold decomposition $\mathcal K_0$ splits into reducing subspaces $\mathcal{K}_{01}$, $\mathcal{K}_{02}$ of $V$ such that $\mathcal{K}_0=\mathcal{K}_{01}\oplus \mathcal{K}_{02}$ and that $V|_{\mathcal{K}_{01}}$ is unitarily equivalent to a unilateral shift and $V|_{\mathcal{K}_{02}}$ is a unitary. Then $\mathcal{K}_{01}$ can be identified with $H^2(\mathcal D_{T^*})$ and $\mathcal{K}_{02}$ can be identified with $\ov{\Delta_{T}(L^2(\mathcal{D}_T))}$, where $\Delta_{T}(t)=[I_{\mathcal D_T}-\Theta_T(e^{it})^*\Theta_T(e^{it})]^{1/2}$ and $\Theta_T$ being the characteristic function of the contraction $T$. For further details see Chapter-VI of \cite{Nagy}. Thus, $\mathcal{K}_{0}= \mathcal{K}_{01}\oplus \mathcal{K}_{02}$ can be identified with $\mathbb{K}_{+}=H^2(\mathcal D_{T^*})\oplus \overline{\Delta_{T}(L^2(\mathcal D_T))}$. Also, the isometry $V$ on $\mathcal{K}_0$ can be realized as $M_z \oplus M_{e^{it}}|_{\overline{\Delta_T(L^2(\mathcal D_T))}}$. Therefore, there is a unitary
\begin{equation}\label{unitary between to spaces}
	\tau=\tau_1\oplus \tau_2:\mathcal{K}_{01}\oplus \mathcal{K}_{02}\to (H^2\otimes \mathcal{D}_{T^*})\oplus \overline{\Delta_{T}(L^2(\mathcal{D}_T))}\, :=\widetilde{\mathbb{K}}_{+}
\end{equation}
such that $V$ on $\mathcal{K}_0$ can be realized as $(M_z \otimes I_{\mathcal D_{T^*}}) \oplus M_{e^{it}}|_{\overline{\Delta_T(L^2(\mathcal D_T))}}$ on $\widetilde{\mathbb{K}}_{+}$.

\smallskip

If $(T_1,\ldots ,T_n)$ is a tuple of commuting contractions acting on $\mathcal{H}$ satisfying the hypotheses of Theorem \ref{coromain}, then it possesses an isometric dilation $(V_1,\ldots ,V_n)$ on the minimal isometric dilation space $\mathcal{K}_0$ of $T$. By Wold decomposition of commuting isometries, we have that $\mathcal{K}_{01}$ and $\mathcal{K}_{02}$ are reducing subspaces for each $V_i$ and that \begin{equation}\label{infoV_i2}
V_{i2}=V_i|_{\mathcal{K}_{02}}
\end{equation}
is a unitary for $1\leq i \leq n$. Now we state our dilation theorem and functional model in the Sz. Nagy-Foias setting. This is another main result of this paper.
 
\begin{thm}\label{Nagy isodil}
	Let $(T_1,\ldots ,T_n)$ be a tuple of commuting contractions acting on $\mathcal{H}$ such that $T=\Pi_{i=1}^nT_i$ is a c.n.u. contraction. Suppose there are orthogonal projections $P_1,\ldots  ,P_n$ and commuting unitaries $U_1,\ldots ,U_n $ in $\mathcal{B}(\mathcal{D}_T)$ satisfying
	\begin{enumerate}
		\item $D_TT_i=P_i^{\perp}U_i^*D_T+P_iU_i^*D_TT$,
		\item  $P_i^{\perp}U_i^*P_j^{\perp}U_j^*=P_j^{\perp}U_j^*P_i^{\perp}U_i^*$,
		\item $U_iP_iU_jP_j=U_jP_jU_iP_i$ ,
		\item $D_TU_iP_iU_i^*D_T=D_{T_i}^2$ ,
	\end{enumerate} for $1\leq i<j\leq n$. 
	Then there are projections $Q_1,\ldots ,Q_n$ and commuting unitaries $\widetilde{U}_1,\ldots,\widetilde{U}_n$ in $\mathcal{B}(\mathcal{D}_{T^*})$ such that $(T_1, \dots , T_n)$ dilates to the tuple of commuting isometries $(\widetilde{V}_{11}\oplus \widetilde{V}_{12},\ldots ,\widetilde{V}_{n1}\oplus \widetilde{V}_{n2})$ on $\widetilde{\mathbb{K}}_+=H^2\otimes \mathcal{D}_{T^*}\oplus \ov{\Delta_{T}(L^2(\mathcal{D}_T))}$, where
	\begin{align*}
		\widetilde{V}_{i1}&=I\otimes\widetilde{U}_iQ_i^{\perp}+M_z\otimes \widetilde{U}_iQ_i\,,\\
		\widetilde{V}_{i2}&=\tau_2V_{i2}\tau_2^* \,,
	\end{align*} for unitaries $\tau_2$ and $V_{i2}$ as in \eqref{unitary between to spaces} and \eqref{infoV_i2} respectively for $1 \leq i \leq n$.
	 
\end{thm}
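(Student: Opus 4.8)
The idea is to carry the isometric dilation of Theorem \ref{coromain} across to the Sz. Nagy-Foias picture, using the Wold decomposition of the minimal isometric dilation $V$ of $T$: one feeds the shift part into the Berger-Coburn-Lebow / Bercovici-Douglas-Foias model, while leaving the unitary part untouched.

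First I would invoke Theorem \ref{coromain} together with the discussion preceding this statement: conditions $(1)$--$(4)$ produce a commuting isometric dilation $(V_1,\dots,V_n)$ of $(T_1,\dots,T_n)$ on the minimal isometric dilation space $\mathcal K_0=\mathcal K_{01}\oplus\mathcal K_{02}$ of $T$, with $V=\prod_i V_i$ the minimal isometric dilation of $T$; here $\mathcal K_{01},\mathcal K_{02}$ are the shift and unitary parts of the Wold decomposition of $V$, each reducing every $V_i$, with $V_{i2}:=V_i|_{\mathcal K_{02}}$ a unitary, $V_{i1}:=V_i|_{\mathcal K_{01}}$ an isometry, and $\prod_i V_{i1}=V|_{\mathcal K_{01}}$ a pure isometry.

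For the shift part, recall from the Sz. Nagy-Foias description of $\mathcal K_0$ that $V|_{\mathcal K_{01}}$ is unitarily equivalent to $M_z$ on $H^2(\mathcal D_{T^*})$, so the adjoint-defect space of the pure isometry $\prod_i V_{i1}$ has the same dimension as, and may be identified with, $\mathcal D_{T^*}$. Applying the Berger-Coburn-Lebow model, Theorem \ref{BCL}, in the refined form of Theorem \ref{BDF lemma O}, therefore yields orthogonal projections $Q_1,\dots,Q_n$ and commuting unitaries $\widetilde U_1,\dots,\widetilde U_n$ in $\mathcal B(\mathcal D_{T^*})$ with $\prod_i\widetilde U_i=I$ and satisfying conditions $(1)$--$(4)$ of Theorem \ref{BDF lemma O}, together with a unitary $\tau_1:\mathcal K_{01}\to H^2\otimes\mathcal D_{T^*}$ --- the composite of the model unitary with the canonical identification of the two equidimensional defect spaces --- under which $V|_{\mathcal K_{01}}\mapsto M_z\otimes I$ and $V_{i1}\mapsto\widetilde V_{i1}:=I\otimes\widetilde U_iQ_i^{\perp}+M_z\otimes\widetilde U_iQ_i$ for each $i$.

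Finally I would assemble the two halves. Taking $\tau_2$ as in \eqref{unitary between to spaces}, which identifies $\mathcal K_{02}$ with $\overline{\Delta_T(L^2(\mathcal D_T))}$, set $\widetilde V_{i2}:=\tau_2V_{i2}\tau_2^*$; these are commuting unitaries, so each $\widetilde V_{i1}\oplus\widetilde V_{i2}$ is an isometry on $\widetilde{\mathbb K}_+=H^2\otimes\mathcal D_{T^*}\oplus\overline{\Delta_T(L^2(\mathcal D_T))}$, the $n$ of them commute, and their product is the Sz. Nagy-Foias form of the minimal isometric dilation of $T$. The unitary $\tau=\tau_1\oplus\tau_2:\mathcal K_0\to\widetilde{\mathbb K}_+$ intertwines each $V_i$ with $\widetilde V_{i1}\oplus\widetilde V_{i2}$; since $(V_1,\dots,V_n)$ dilates $(T_1,\dots,T_n)$ on $\mathcal K_0$, transporting the dilation relation along $\tau$ gives precisely the claimed dilation of $(T_1,\dots,T_n)$ to $(\widetilde V_{11}\oplus\widetilde V_{12},\dots,\widetilde V_{n1}\oplus\widetilde V_{n2})$ on $\widetilde{\mathbb K}_+$. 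The one genuine point of care is in the third step: one must check that the Berger-Coburn-Lebow model of the shift tuple $(V_{11},\dots,V_{n1})$ is forced onto $H^2(\mathcal D_{T^*})$ --- equivalently, that its coefficient space may be taken to be $\mathcal D_{T^*}$, so that $Q_i,\widetilde U_i$ genuinely lie in $\mathcal B(\mathcal D_{T^*})$ --- and then to arrange a single $\tau=\tau_1\oplus\tau_2$ realizing $V$ in its Wold form and each $V_{i1}$ in the stated linear-in-$z$ form simultaneously; this is routine once the Nagy-Foias description of $\mathcal K_0$ is in hand, since the model unitary commutes with $M_z$.
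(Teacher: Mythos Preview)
Your proposal is correct and follows essentially the same route as the paper's proof: invoke Theorem \ref{coromain} to obtain the Sch\"affer-type isometric dilation $(V_1,\dots,V_n)$ on $\mathcal K_0$, split $\mathcal K_0$ into the pure and unitary parts of the Wold decomposition (which reduce each $V_i$), apply the Berger--Coburn--Lebow model (Theorem \ref{BCL}) to the pure tuple to produce the projections $Q_i$ and commuting unitaries $\widetilde U_i$ on $\mathcal D_{T^*}$, and transport everything to $\widetilde{\mathbb K}_+$ via the unitary $\tau$ of \eqref{unitary between to spaces}. The only cosmetic difference is that the paper first moves to $\widetilde{\mathbb K}_+$ via $\tau$ and then applies BCL there (picking up an auxiliary unitary $Z$ and setting $Y=Z\tau_1\oplus\tau_2$), whereas you fold the BCL identification into the definition of $\tau_1$ itself; the content is the same.

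One small caution: your opening line asserts that $V=\prod_i V_i$ is the minimal isometric dilation of $T$, but Theorem \ref{coromain} only guarantees this under the additional condition $(5)$, which is not assumed here. The paper sidesteps this by simply \emph{naming} the product $\prod_i V_i$ as $V$ and taking the Wold decomposition of that isometry; the identification of the coefficient space with $\mathcal D_{T^*}$ then relies on the ambient space being the minimal dilation space of $T$, exactly the ``point of care'' you flag at the end. So your awareness of this subtlety is well placed, but you should not claim $\prod_i V_i$ equals the Sch\"affer dilation outright.
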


\begin{proof}
By Theorem \ref{coromain}, we have that $(T_1,\ldots ,T_n)$ possesses an isometric dilation $(V_1,\ldots ,V_n)$ on $\mathcal{K}_0=\mathcal{H}\oplus l^2(\mathcal{D}_{T})$ which in fact satisfies $V_i^*|_{\mathcal{H}}=T_i^*$ for $1\leq i \leq n$. Now $\mathcal{K}_0$ has an orthogonal decomposition $\mathcal{K}_0=\mathcal{K}_{01}\oplus \mathcal{K}_{02}$ such that $\mathcal{K}_{01}$ and $\mathcal{K}_{02}$ are common reducing subspaces for $V_1,\ldots ,V_n$, $(V_1|_{\mathcal{K}_{01}},\ldots ,V_n|_{\mathcal{K}_{01}})$ is a pure isometric tuple, i.e. $\prod_{i=}^n V_i|_{\mathcal{K}_{01}}$ is a pure isometry and $(V_1|_{\mathcal{K}_{02}},\ldots  ,V_n|_{\mathcal{K}_{02}})$ is a unitary tuple.
Let $ \prod_{i=1}^nV_i=V$, $V_i|_{\mathcal{K}_{01}}=V_{i1}$ and $V_i|_{\mathcal{K}_{02}}=V_{i2}$ for $1\leq i \leq n$.  Also, let $ \prod_{i=1}^nV_{i1}=W_1$ and $ \prod_{i=1}^nV_{i2}=W_2 $. So $W_1$ on $\mathcal{K}_{01}$ is a pure isometry and $W_2$ on $\mathcal{K}_{02}$ is a unitary. So,
$$
(\tau V_1 \tau^*,\ldots ,\tau V_n \tau^*)= (\tau_1 V_{11} \tau_1^*\oplus \tau_2 V_{12} \tau_2^*,\ldots ,\tau_1 V_{n1} \tau_1^*\oplus \tau_2 V_{n2} \tau_2^*)
$$
is an isometric dilation of $(T_1,\ldots , T_n)$ on $\widetilde{\mathbb{K}}_+$, where $\tau=\tau_1\oplus \tau_2$ is as in \eqref{unitary between to spaces}. Thus, the tuple $(\tau_1 V_{11} \tau_1^*,\ldots \tau_1 V_{n1} \tau_1^*)$ on $H^2\otimes \mathcal{D}_{T^*}$ is a pure isometric tuple. Hence, by Theorem \ref{BCL}, there exist commuting unitaries $\widetilde{U}_1,\ldots , \widetilde{U}_n$ and orthogonal projections $Q_1,\ldots ,Q_n$ in $\mathcal{B}(\mathcal{D}_{T^*}) \; (=\mathcal B(D_{W_1^*}))$ such that the tuple $(\tau_1 V_{11} \tau_1^*,\ldots \tau_1 V_{n1} \tau_1^*)$ is unitarily equivalent to 
$$
(I\otimes \widetilde{U}_1Q_1^{\perp}+M_z \otimes \widetilde{U}_1Q_1,\ldots ,I\otimes \widetilde{U}_nQ_n^{\perp}+M_z \otimes \widetilde{U}_nQ_n) \quad \text{ on } \quad {H}^2\otimes \mathcal{D}_{T^*}
$$
via a unitary, say $Z$. For $1\leq i \leq n$, let us denote  $\widetilde{V}_{i1}= I\otimes\widetilde{U}_iQ_i^{\perp}+M_z \otimes \widetilde{U}_iQ_i$ and $\widetilde{V}_{i2}=\tau_2 V_{i2} \tau_2^* $. So, $(\tau V_1 \tau^*,\ldots ,\tau V_n \tau^*) $ is unitarily equivalent to $(\widetilde{V}_{11}\oplus \widetilde{V}_{12},\ldots , \widetilde{V}_{n1}\oplus \widetilde{V}_{n2}) $ via the unitary $Z\oplus I$. Thus, $(V_1,\ldots ,V_n)$ is unitarily equivalent to $(\widetilde{V}_{11}\oplus \widetilde{V}_{12},\ldots , \widetilde{V}_{n1}\oplus \widetilde{V}_{n2})$ via a unitary \[
Y=Z\tau_1\oplus \tau_2:\mathcal{K}_{01}\oplus \mathcal{K}_{02}\to H^2\otimes \mathcal{D}_{T^*}\oplus \ov{\Delta_{T}(L^2(\mathcal{D}_T))}.
\]
Let $\widetilde{V}_i=\widetilde{V}_{i1}\oplus \widetilde{V}_{i2}$, for $i=1,\ldots n$. Since for each $i$, $V_i^*|_{\mathcal{H}}=T_i^*$, we have for any $h\in \mathcal{H} $,
\[
\widetilde{V}_i^*(Yh)=(YV_iY^*)^*Yh=YV_i^*Y^*Yh=YV_i^*h=YT_i^*h. 
\]
Therefore, $\widetilde{V}_i^*|_{Y(\mathcal{H})}=YT_i^*Y^*|_{Y(\mathcal{H})}$ for $1\leq i \leq n$. So, we have $T_1^{k_1}\ldots T_n^{k_n}=Y^*\widetilde{V}_1^{k_1}\ldots \widetilde{V}_n^{k_n}Y $. Thus, $ (\widetilde{V}_{11}\oplus \widetilde{V}_{12},\ldots , \widetilde{V}_{n1}\oplus \widetilde{V}_{n2})$ is an isometric dilation of $(T_1,\ldots, T_n ) $, where $\widetilde{V}_{i1}= I\otimes\widetilde{U}_iQ_i^{\perp}+M_z \otimes \widetilde{U}_iQ_i$ and $\widetilde{V}_{i2}=\tau_2 V_{i2} \tau_2^* $ for $1\leq i \leq n$. This completes the proof. 
\end{proof}

\vspace{0.1cm}

\section{A model theory for a class of commuting contractions} \label{model-theory}

\vspace{0.2cm}

\noindent In this Section, we present a model theory for a tuple of commuting contractions satisfying the conditions of Theorems \ref{main} \& \ref{coromain}.

\begin{thm}\label{model}
	Let $(T_1,\ldots, T_n)$ be commuting tuple of contractions on a Hilbert space $\mathcal{H}$ and let $T=\Pi_{i=1}^nT_i$ . Suppose there are projections $P_1,\ldots ,P_n\in \mathcal{B}(\mathcal{D}_{T^*})$ and commuting unitaries $U_1,\ldots ,U_n\in \mathcal{B}(\mathcal{D}_{T^*})$ such that for each $i=1, \dots, n$,
	\begin{enumerate} 
		\item $D_{T^*}T_i^*=P_i^{\perp}U_i^*D_{T^*}+P_iU_i^*D_{T^*}T^*$,
		\item  $P_i^{\perp}U_i^*P_j^{\perp}U_j^*=P_j^{\perp}U_j^*P_i^{\perp}U_i^*$,
		\item $U_iP_iU_jP_j=U_jP_jU_iP_i$ ,
		\item $D_{T^*}U_iP_iU_i^*D_{T^*}=D_{T_i^*}^2$ .
	\end{enumerate} 
	Let $Z_1,\ldots ,Z_n$ on $\mathcal{K}=\mathcal{H}\oplus l^2(\mathcal{D}_{T^*})$ be defined as follows:
	\[ Z_i=\begin{bmatrix}
		T_i&D_{T^*}U_iP_i&0&0&\ldots \\
		0&U_iP_i^{\perp}&U_iP_i&0&\ldots \\
		0&0&U_iP_i^{\perp}&U_iP_i&\ldots \\
		0&0&0&U_iP_i^{\perp}&\ldots \\
		\ldots & \ldots & \ldots & \ldots & \ldots
	\end{bmatrix} , \qquad (1\leq i \leq n).
	\]
	Then, 
	\begin{enumerate}
		\item[(i)] $(Z_1,\ldots ,Z_n)$ is a commuting $n$-tuple of co-isometries, $\mathcal{H}$ is a common invarient subspace of $Z_1,\ldots ,Z_n$ and $Z_i|_{\mathcal{H}}=T_i$.
		\item[(ii)] There is an orthogonal decomposition $\mathcal{K}=\mathcal{K}_1\oplus \mathcal{K}_2$ into common reducing subspaces of $Z_1,\ldots ,Z_n$ such that $(Z_1|_{\mathcal{K}_1},\ldots ,Z_n|_{\mathcal{K}_1})$ is a pure co-isometric tuple, that is $Z|_{\mathcal{K}_1}=\Pi_{i=1}^nZ_i|_{\mathcal{K}_1}$ is a pure co-isometry and $(Z_1|_{\mathcal{K}_2},\ldots ,Z_n|_{\mathcal{K}_2})$ is a unitary tuple.
		\end{enumerate}
Additionally, if
\[
(5) \;P_1+U_1^*P_2U_1+U_1^*U_2^*P_3U_2U_1+\ldots +U_1^*U_2^*\ldots U_{n-1}^*P_nU_{n-1}\ldots U_2U_1=I_{\mathcal{D}_T} ,
\]
then
	\begin{enumerate}
		\item[(iii)] $\mathcal{K}_1$ can be identified with $H^2(\mathcal{D}_{Z})$, where $\mathcal{D}_Z$ has same dimension as that of $\mathcal{D}_T$. Also, there are projections $\widehat{P_1},\ldots ,\widehat{P_n}$, commuting unitaries $ \widehat{U_1},\ldots ,\widehat{U_n}$ such that the operator tuple $(Z_1|_{\mathcal{K}_1},\ldots, Z_n|_{\mathcal{K}_1})$ is unitarily equivalent to the multiplication operator tuple
		\[
		\left( M_{\widehat{U_1}\widehat{P_1}^{\perp}+\widehat{U_1}\widehat{P_1}\bar{z}},\ldots ,M_{\widehat{U_n}\widehat{P_n}^{\perp}+\widehat{U_n}\widehat{P_n}\bar{z}} \right)
		\]
		acting on $H^2(\mathcal{D}_{T})$.  
	\end{enumerate}
\end{thm}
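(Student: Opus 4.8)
The whole argument rests on recognising that the block operators $Z_i$ are exactly the adjoints of the Sch\"affer-type dilation of Theorem~\ref{main} applied to the \emph{dual} tuple $(T_1^*,\dots,T_n^*)$. Indeed, $(T_1^*,\dots,T_n^*)$ is a commuting tuple of contractions whose product is $(T_1\cdots T_n)^*=T^*$, and conditions (1)--(4) in the hypothesis of Theorem~\ref{model} are precisely conditions (1)--(4) of Theorem~\ref{coromain} written for this tuple (with $T_i,T,\mathcal D_T,D_{T_i}$ replaced by $T_i^*,T^*,\mathcal D_{T^*},D_{T_i^*}$). Hence Theorem~\ref{coromain} yields a commuting isometric dilation $(\widehat V_1,\dots,\widehat V_n)$ of $(T_1^*,\dots,T_n^*)$ on the Sch\"affer space $\mathcal H\oplus l^2(\mathcal D_{T^*})$, and comparing \eqref{eqn:isom-01} with the displayed matrix for $Z_i$ shows $\widehat V_i=Z_i^*$. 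From this (i) is immediate: $(Z_1^*,\dots,Z_n^*)$ is a commuting tuple of isometries, so $(Z_1,\dots,Z_n)$ is a commuting tuple of co-isometries; and the first column of the block matrix of $Z_i$ shows $Z_i\mathcal H\subseteq\mathcal H$ with $Z_i|_{\mathcal H}=T_i$.

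For (ii) I would apply the Wold decomposition to the single isometry $\widehat V:=\prod_{i=1}^n Z_i^*=Z^*$ (the product is order-independent by commutativity), obtaining $\mathcal K=\mathcal K_s\oplus\mathcal K_u$ with $\widehat V|_{\mathcal K_s}$ a pure (shift) isometry and $\widehat V|_{\mathcal K_u}$ unitary, where $\mathcal K_u=\bigcap_{k\ge0}\widehat V^k\mathcal K$. Since each $Z_i^*$ commutes with $\widehat V$ one gets $Z_i^*\mathcal K_u\subseteq\mathcal K_u$; combined with the surjectivity of $\widehat V$ on $\mathcal K_u$ this forces $Z_i^*\mathcal K_u=\mathcal K_u$, and then $Z_i\mathcal K_u\subseteq\mathcal K_u$ follows from $Z_iZ_i^*=I$. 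Thus $\mathcal K_u$, hence also $\mathcal K_s$, reduces every $Z_i$. On $\mathcal K_u$ the product $\widehat V$ is unitary, and a product of commuting isometries can be unitary only if every factor is; so each $Z_i^*|_{\mathcal K_u}$ and therefore each $Z_i|_{\mathcal K_u}$ is unitary. On $\mathcal K_s$, $\widehat V|_{\mathcal K_s}=\prod Z_i^*|_{\mathcal K_s}$ is a pure isometry, so its adjoint $Z|_{\mathcal K_s}=\prod Z_i|_{\mathcal K_s}$ is a pure co-isometry. Setting $\mathcal K_1=\mathcal K_s$ and $\mathcal K_2=\mathcal K_u$ gives (ii).

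For (iii) the extra hypothesis (5) is exactly condition (5) of Theorem~\ref{main} for the dual tuple, so Theorem~\ref{main} applies and $\widehat V=\prod Z_i^*$ is the \emph{minimal} Sch\"affer isometric dilation of $T^*$. Its Wold shift part then has wandering subspace $\mathcal L=\mathcal K\ominus\widehat V\mathcal K$ of dimension $\dim\mathcal D_{(T^*)^*}=\dim\mathcal D_T$; moreover $I-Z^*Z=I-\widehat V\widehat V^*$ is the orthogonal projection onto $\mathcal L$, so $\mathcal D_Z=\mathcal L$ and $\mathcal K_1=\mathcal K_s=\bigoplus_{k\ge0}\widehat V^k\mathcal L$ is identified with $H^2(\mathcal D_Z)$, where $\dim\mathcal D_Z=\dim\mathcal D_T$. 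Now $(Z_1^*|_{\mathcal K_1},\dots,Z_n^*|_{\mathcal K_1})$ is a commuting tuple of isometries with pure product $\widehat V|_{\mathcal K_1}$, so Theorem~\ref{BCL} supplies commuting unitaries $\widehat W_1,\dots,\widehat W_n$ and orthogonal projections $\widehat Q_1,\dots,\widehat Q_n$ in $\mathcal B(\mathcal D_Z)$ with $(Z_1^*|_{\mathcal K_1},\dots,Z_n^*|_{\mathcal K_1})\equiv(M_{\widehat Q_1^\perp\widehat W_1+z\widehat Q_1\widehat W_1},\dots,M_{\widehat Q_n^\perp\widehat W_n+z\widehat Q_n\widehat W_n})$ on $H^2(\mathcal D_Z)\cong H^2(\mathcal D_T)$. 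Taking adjoints, using $(M_{A+zB})^*=M_{A^*+\bar z B^*}$, and putting $\widehat U_i:=\widehat W_i^*$, $\widehat P_i:=\widehat Q_i$, we obtain $(Z_1|_{\mathcal K_1},\dots,Z_n|_{\mathcal K_1})\equiv(M_{\widehat U_1\widehat P_1^\perp+\widehat U_1\widehat P_1\bar z},\dots,M_{\widehat U_n\widehat P_n^\perp+\widehat U_n\widehat P_n\bar z})$ on $H^2(\mathcal D_T)$, which is (iii).

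The main obstacle is not any single hard estimate but the bookkeeping. Once the identification $Z_i^*=\widehat V_i$ is in hand, (i) is essentially free; the points that genuinely require care are (a) showing that the Wold subspaces of the product isometry $\widehat V$ actually \emph{reduce} each $Z_i$ — for this one needs commutativity, surjectivity of $\widehat V$ on the unitary part, and $Z_iZ_i^*=I$, not merely invariance — and (b) in (iii), tracking the adjoints cleanly through the Berger--Coburn--Lebow model while confirming $\dim\mathcal D_Z=\dim\mathcal D_T$.
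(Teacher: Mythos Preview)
Your proposal is correct and follows essentially the same route as the paper: identify $Z_i^*$ with the Sch\"affer-type isometric dilation of $(T_1^*,\dots,T_n^*)$ via Theorem~\ref{coromain}, Wold-decompose the product isometry for (ii), and invoke Theorem~\ref{BCL} plus the identification $\mathcal D_Z\cong\mathcal D_T$ for (iii). The only cosmetic differences are that the paper proves reducibility in (ii) by a block-matrix norm argument (showing the off-diagonal blocks vanish because $\|B_i^*X_{K1}^{*k}\|\to0$) rather than your invariance/surjectivity reasoning, and in (iii) it builds the unitary $\tau:\mathcal D_T\to\mathcal D_Z$ explicitly via $\tau D_Th=D_Zh$ rather than citing the wandering-subspace dimension of a minimal dilation.
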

\begin{proof}
	We apply Theorem \ref{coromain} to the tuple $(T_1^*,\ldots ,T_n^*)$ of commuting contractions to have an isometric dilation $(X_1,\ldots ,X_n)$ on $\mathcal{K}_0=\mathcal{H}\oplus l^2(\mathcal{D}_{T^*})$, where
	\[
	X_i=\begin{bmatrix}
		T_i^*&0&0&0&\ldots \\
		P_iU_i^*D_{T^*}&P_i^{\perp}U_i^*&0&0&\ldots \\
		0&P_iU_i^*&P_i^{\perp}U_i^*&0&\ldots \\
		0&0&P_iU_i^*&P_i^{\perp}U_i^*&\ldots \\
		\ldots & \ldots & \ldots & \ldots & \ldots
	\end{bmatrix} ,  \qquad (1\leq i \leq n). 
	\]
Clearly $Z_i=X_i^*$, for each $i$ and it is evident from the block-matrix that $\mathcal{H}$ is a common invariant subspace for each $Z_i$ and $Z_i|_{\mathcal{H}}=T_i$. This proves $(i)$. Since $(X_1,\ldots ,X_n)$ is a commuting tuple of isometry, $X=\Pi_{i=1}^nX_i$ is an isometry. By Wold decomposition, $\mathcal{K}=\mathcal{K}_1\oplus \mathcal{K}_2$ and that $X|_{\mathcal{K}_1}$ is a pure isometry, $X|_{\mathcal{K}_2}$ is a unitary. Also, they are common reducing subspaces for each $X_i$. Indeed, if
\[
X_i=\begin{pmatrix}
		A_i&B_i\\C_i&D_i
	\end{pmatrix} \quad \& \quad X=\begin{pmatrix}
		X_{K1} & 0\\
		0 & X_{K2}
	\end{pmatrix}
	\]
	with respect to the decomposition $\mathcal{K}=\mathcal{K}_1\oplus \mathcal{K}_2$ then $X_iX=XX_i$ implies that 
	\[
	B_iX_{K2}=X_{K1}B_i \quad \& \quad C_iX_{K1}=X_{K2}C_i.\]
Therefore, for all $k\in\mathbb{N}$ we have
$$
X_{K2}^{*k}B_i^*=B_i^*X_{K1}^{*k} \quad \& \quad X_{K1}^{*n}C_i^*=C_i^*X_{K2}^{*k}.
$$
Now $X_{K1}$ is a pure isometry and $X_{K2}$ is unitary, so on one hand we have $\|X_{K2}^{*k}B_i^*\|= \|B_i^*\|$ and on the other hand $\|B_i^*X_{K1}^{*k}\|\to 0$ as $k \rightarrow \infty $. Hence $B_i=0$. Similarly, $C_i=0$ for $1\leq i \leq n$. Thus, with respect to the decomposition $\mathcal{K}=\mathcal{K}_1\oplus\mathcal{K}_2$, the operator $X_i$ takes form
\[
X_i= \begin{pmatrix}
		X_{i1}&0\\
		0&X_{i2}
	\end{pmatrix}, \qquad (1\leq i \leq n).
	\]
	Now since $X_i$ is an isometry, so will be $X_{i1}$, $X_{i1}'=\Pi_{j\neq i}X_{j1}$ and $X_{i2}$, $X_{i2}'=\Pi_{j\neq i}X_{j2}$. Further from the block-matrix of $X_i$ and from the fact that $X=\Pi_{i=1}^nX_i$, it is clear that $X_{K2}=\Pi_{i=1}^nX_{i2}$. Again, $X_{K2}$ is a unitary, $X_{i2}'$ is an isometry and $X_{i2'}^*X_{K2}=X_{i2}$. So, we have
	\[
		X_{i2}X_{i2}^*=X_{i2'}^*X_{K2}X_{K2}^*X_{i2'}=I, \qquad (1\leq i \leq n).
	\]
	Thus, $X_{i2}$ is a unitary on $\mathcal{K}_2$ for each $i=1,\ldots ,n$. Further $(X_{11},\ldots,X_{n1} )$ is a pure isometric tuple as $X_{K1}=\Pi_{i=1}^nX_{i1}$ is a pure isometry. Since $Z_i=X_i^*$, we have that $(Z_1|_{\mathcal K_1},\ldots ,Z_n|_{\mathcal K_1})$ is pure co-isometric tuple and $(Z_1|_{\mathcal{K}_2},\ldots ,Z_n|_{\mathcal{K}_2})$ is a unitary tuple. Hence, $(ii)$ holds.  
	
	Additionally, if $(5)$ holds then the dilation $(X_1,\ldots ,X_n)$ satisfies the condition that $X$ is the Sch$\ddot{a}$ffer's minimal isometric dilation of $T^*$ by Theorem \ref{main}. Let us denote $Z=X^*$. Then $Z$ is a co-isometry as $X$ is an isometry and
	\[
	Z=\begin{bmatrix}
		T&D_{T^*}&0&0&\ldots\\
		0&0&I_{\mathcal{D}_{T^*}}&0&\ldots\\
		0&0&0&I_{\mathcal{D}_{T^*}}&\ldots\\
		\vdots&\vdots&\vdots&\vdots&\ldots
	\end{bmatrix}.
	\]
Note that the dimensions of $\mathcal{D}_{Z}$ and $\mathcal{D}_{T}$ are same. Indeed, if $\tau:\mathcal{D}_{T}\to \mathcal{D}_{Z}$ is defined by $\tau D_{T}h=D_{Z}h$ for all $h\in \mathcal{H}$ and extended continuously to the closure, then $\tau$ is a unitary. We recall the proof here. Since $X$ is the minimal isometric dilation of $T^*$, we have
\[
		\mathcal{K} =\overline{span}\{X^kh:k\geq 0,\,h\in \mathcal{H} \} =\overline{span}\{Z^{*k}h:k\geq 0,\,h\in \mathcal{H} \}.
	\]
	Now for $n\in \mathbb{N}$ and $h\in \mathcal{H}$, we have
	$$
	D_{Z}^2 Z^{*n}h=(I-Z^*Z)Z^{*n}h=Z^{*n}-Z^*Z^nZ^*h=0.
	$$
	Therefore, $D_{Z}X^nh=0$ for any $n\in \mathbb{N}$ and $h\in \mathcal{H}$. So, $\mathcal{D}_{Z}=\overline{D_{Z}\mathcal{K}}=\overline{D_{Z}\mathcal{H}}.$ Also,
	\[
		\|D_{Z}h\|^2=\lbrace(I-Z^*Z)h,h \rbrace=\|h\|^2-\|Zh\|^2=\|h\|^2-\|Th\|^2=\|D_{T}h\|^2.
	\]
	Therefore, $\tau$ is a unitary.  By Theorem \ref{BCL}, we have that
	$$
	(X_{11},\ldots ,X_{n1})\cong (M_{\widetilde{U}_1Q_1^{\perp}+z\widetilde{U}_1Q_1},\ldots , M_{\widetilde{U}_nQ_n^{\perp}+z\widetilde{U}_nQ_n}),
	$$ where $Q_1,\ldots ,Q_n$ are projections and $\widetilde{U}_1,\ldots \widetilde{U}_n$ are commuting unitaries from $\mathcal{B}(\mathcal{D}_{X_{K1}^*})$ satisfying
	\begin{equation}\label{X1ionK1} 
	D_{X_{i1}'^*}^2X_{i1}^*=D_{X_{K1}^*}Q_i^{\perp}\widetilde{U}_i^*D_{X_{K1}^*} 
	\end{equation}
	and 
	\begin{equation}\label{X1i'onK1} 
	D_{X_{i1}^*}^2X_{i1}'^*=D_{X_{K1}^*}\widetilde{U}_iQ_iD_{X_{K2}^*} \end{equation} for all $i=1, \dots, n$. Using the fact that $X_{i2},\ldots , X_{n2}$ are unitaries on $\mathcal{K}_2$, it follows that \[D_{X_i'^*}^2=I_{\mathcal{K}}-\begin{bmatrix}
		X_{i1}'&0\\
		0&X_{i2}'
	\end{bmatrix}\begin{bmatrix}
		X_{i1}'^*&0\\
		0&X_{i2}'^*
	\end{bmatrix}=\begin{bmatrix}
		I_{\mathcal{K}_1}-X_{i1}'X_{i1}'^*&0\\
		0&I_{\mathcal{K}_2}-X_{i2}'X_{i2}'^*
	\end{bmatrix}=\begin{bmatrix}
		I_{\mathcal{K}_1}-X_{i1}'X_{i1}'^*&0\\
		0&0
	\end{bmatrix}.
	\]
	Therefore, $D_{X_{i}'^*}=D_{X_{i1}'^*}\oplus 0$. Similarly we can prove that $D_{X_{i}^*}= D_{X_{i1}^*}\oplus 0$ for $1\leq i \leq n$, with respect to the above decomposition of $\mathcal{K}$. So, $D_{X^*}= D_{X_{K1}^*}\oplus0$. Hence $\mathcal{D}_{X_{K1}^*}=\mathcal{D}_{X^*}=\mathcal{D}_{Z}$. Let us denote $\widehat{U_i}=\tau^*\widetilde{U}_i\tau $ and $Q_i=\widehat{P_i}$ for $1\leq i\leq n$. Thus, $\widehat{U_1},\ldots, \widehat{U_n}$ are commuting unitaries and $\widehat{P_1},\ldots ,\widehat{P_n}$ are projections in $\mathcal{B}(\mathcal{D}(T))$ such that $(Z_1|_{\mathcal{K}_1},\ldots Z_n|_{\mathcal{K}_1})$ is unitarily equivalent to $\left( M_{\widetilde{U}_1Q_1^{\perp}+z\widetilde{U}_1Q_1},\ldots , M_{\widetilde{U}_nQ_n^{\perp}+z\widetilde{U}_nQ_n} \right)$, which can be realized as $\left( M_{\widehat{U_1}\widehat{P_1}^{\perp}+\widehat{U_1}\widehat{P_1}\bar{z}},\ldots ,M_{\widehat{U_n}\widehat{P_n}^{\perp}+\widehat{U_n}\widehat{P_n}\bar{z}} \right) $ on $H^2(\mathcal{D}_{T})$ via the unitary $\tau$. This proves $(iii)$ and the proof is complete.
	
\end{proof}

Apart from having the explicit constructions of isometric dilations and functional model for a commuting contractive tuple, another interesting consequence of Theorem \ref{main} is that it gives a commutant lifting in several variables as discussed in Remark \ref{comm:lift1}. We conclude this article here. There will be two more articles in this direction as sequels. One of them will describe explicit constructions of minimal unitary dilations of commuting contractions $(T_1, \dots , T_n)$ on the minimal unitary dilation spaces of $T=\prod_{i=1}^n T_i$. The other article will deal with dilations when the defect spaces $\mathcal D_T, \mathcal D_{T^*}$ are finite dimensional and their interplay with distinguished varieties in the polydisc.

\vspace{0.3cm}

\section{Data availability statement}

 \begin{enumerate}
 
 \item Data sharing is not applicable to this article as no datasets were generated or analysed during
the current study.\\

\item In case any datasets are generated during and/or analysed during the current study, they must
be available from the corresponding author on reasonable request.

\end{enumerate}  

\vspace{0.4cm}

\end{document}